\newtheorem{thm}{Theorem}[section]
\newtheorem{lem}[thm]{Lemma}
\newtheorem{cor}[thm]{Corollary}
\theoremstyle{definition}
\newcommand{\blackged}{\hfill$\blacksquare$}
\newcommand{\whiteged}{\hfill$\square$}
\newcounter{proofcount}
\renewenvironment{proof}[1][\proofname.]{\par
  \ifnum \theproofcount>0 \pushQED{\whiteged} \else \pushQED{\blackged} \fi%
  \refstepcounter{proofcount}
  \normalfont %\topsep6\p@\@plus6\p@\relax
  \trivlist
  \item[\hskip\labelsep
        \itshape
    {\bf\em #1}]\ignorespaces
}{%
  \addtocounter{proofcount}{-1}
  \popQED\endtrivlist
}
\begin{document}

\begin{center}
\textbf{Cylinder topological quantum field theory: A categorical presentation of classical field theory and its symmetries}
\end{center}

\begin{center}
\textit{Juan Orendain}
\end{center}

\noindent \textit{Abstract:} We use geometric ideas coming from certain classic algebraic constructions to associate, to every classical field theory, a symmetric monoidal double functor from the double category of cobordisms with corners to a certain symmetric monoidal double category. We call symmetric monoidal double functors so constructed cylinder topological quantum field theories. Our initial formulation of cylinder topological quantum field theory is presented in a purely topological context. We present appropriate refinements in order to accommodate information relevant to classical field theory.

\tableofcontents

\section{Introduction}

\noindent The subject matter of the present note is the cylinder topological quantum field theory construction. Roughly cylinder topological quantum field theory associates to every classical field theory a topological quantum field theory of the appropriate dimension. The purpose of the construction at present is twofold. On the one hand the cylinder topological quantum field theory construction provides examples of topological quantum field theories having topological interpretations of theories in classical physics as initial data and on the other hand it provides a novel categorical insight into classical field theory and its symmetries.

We use geometric ideas behind constructions in skein and braid theory [14][15][16], the theory of Temperly-Lieb-type objects [19][20], the classification theory of subfactors [21][22] and in the foundational theory of Walker and Morrison's blob complex [23] to offer categorical interpretations of topological models describing local classical field theory and their symmetries. Precisely, we use a special case of Walker's cylinder category construction [24] to associate to any classical field theory satisfying the axioms of the local framework introduced in [1] and [2], a symmetric monoidal double functor from the symmetric monoidal double category of oriented cobordisms of the appropriate dimension with codimension 1 corners to a certain symmetric monoidal double category. We call such double functors cylinder topological quantum field theories. The codomain double category of cylinder topological quantum field theories is the symmetric monoidal double category of fibered topological semi-groups and their fibered bimodules, which we introduce and study in detail. We present refinements of this double category in order to include the smooth, symplectic, and Lagrangian structures present in spaces of global solutions and spaces of germs of local solutions in classical field theory. The cylinder topological quantum field theory construction fits into a programme [1][26][27] whose aim is to provide a categorical understanding of Schrodinger-Feynman quantization beyond the usual approach of classical topological quantum field theory. The topological quantum field theory formalism employed in our presentation provides hints that such interpretations are possible. Further, such presentation fits into the classical theory of quantum invariants and offers, in principle, possible new interpretation of classical invariants of low dimensional manifolds via well understood classical field theories in low dimensions. We now sketch in detail the contents of this paper.

In section 2 we present the axiomatic formulation of classical field theory which will be used throughout the paper. We call theories within this framework local field theories. Our treatment of local field theory will follow [1]. We further present a version of local field theory in which space-time symmetries are taken into account. We call such theories equivariant local field theories. Equivariant local field theories will serve as initial data for cylinder topological quantum field theories.  In section 3 we introduce fibered topological semi-groups and fibered topological bimodules. We prove that fibered topological semigroups and their bimodules organize into an oplax symmetric monoidal double category. We introduce a condition on fibered topological semi-groups and their fibered bimodules making the double category generated fibered topological semi-groups satisfying this condition into a strict double category. We call this ocndition rigidity. The double category of rigid fibered topological semi-groups will serve as codomain double category for cylinder topological quantum field theory. In section 4 we introduce cylinder topological quantum field theor. We associate, through what we call the cylinder topological semi-group construction, a symmetric monoidal double functor from the double category of cornered cobordisms of appropriate dimension to the double category of rigid fibered topological semi-groups, its cylinder topological quantum field theory. Cylinder topological quantum field theory is the main contstruction of the paper. In sections 5 and 6 we introduce smooth and Lagrangian cylinder topological quantum field theory. We define the concepts of fibered Lie semi-group and of fibered symplectic semi-group. We associate to every equivariant local field theory a symmetric monoidal double functor from the double category of cornered cobordisms of the appropriate dimension to a symmetric monoidal category of fibered Lie semi-groups in the smooth case and to a symmetric monoidal double category of fibered symplectic semi-groups in the Lagrangian case. Smooth and Lagrangian cylinder topologcal quantum field theories are meant to account for the smooth, symplectic, and Lagrangian structures present in spaces of global solutions and spaces of germs of local solutions in local field theory.

\

\section{Equivariant local field theory}

\

\noindent In this first section we introduce equivariant local field theory. We divide this section into two parts. In the first part we present the definition of classical local field theory. In the second we present the definition of its equivariant counterpart. We explain how every classical local field theory can be considered as an equivariant local field theory.

\

\noindent \textit{Classical local field theory}

\

\noindent Before presenting the definition of classical local field theory we introduce the concept of gluing triple. We begin with the definition of closed gluing triple. Given codimension 0 closed submanifolds $\Sigma$ and $\Lambda$ of the boundary $\partial X$ of a manifold $X$ such that $\partial X$ contains a second copy $-\Sigma$ of $\Sigma$ with opposite orientation we will say that the triple $X,\Sigma,\Lambda$ forms an uncornered gluing triple if $\partial X$ admits a decomposition as the disjoint union

\[\partial X=\Lambda\sqcup \Sigma\sqcup -\Sigma\]

\noindent In the cornered case, that is in the case in which $\Sigma$ and $\Lambda$ have boundary, we will say that triple $X,\Sigma,\Lambda$ is a cornered gluing triple when equation $\partial \Lambda=(\partial \Sigma \sqcup \partial -\Sigma)\cap \partial \Lambda$ holds and when $\partial X$ admits a decomposition as the union

\[\partial X= \Lambda\cup \Sigma\cup -\Sigma\]

\noindent We will say that a gluing triple, that is a cornered or uncornered gluing triple $X,\Sigma,\Lambda$, is of dimension $n$ if manifold $X$ is of dimension $n$. Given a gluing triple $X,\Sigma,\Lambda$ we will write $X_{gl}^\Sigma$ or simply $X_{gl}$ when $\Sigma$ is clear from the context, for the manifold obtained from $X$ by gluing manifolds $\Sigma$ and $-\Sigma$. Observe that in this case the boundary $\partial X_{gl}$ of manifold $X_{gl}$ is equal to $\Lambda$ in the case in which $X,\Sigma,\Lambda$ is uncornered, and is equal to the manifold $\Lambda_{gl}$ obtained from the uncornered gluing triple $\Lambda,\partial\Sigma\cap\Lambda,\partial\Lambda$ in the case in which triple $X,\Sigma,\Lambda$ is cornered.

\

\noindent We now present the definition of classical local field theory. Let $n$ be  apositive integer. A local field theory of dimension $n$ consists of the following data:

\begin{enumerate}
\item \textbf{Germs of local solutions:} For each oriented manifold $\Sigma$ of dimension $n-1$ we assign a symplectic manifold $(L_\Sigma,\omega_\Sigma)$. We call $L_\Sigma$ the space of germs of local solutions on $L$.

\item \textbf{Global solutions:} To each oriented manifold $X$ of dimension $n$ we assign a smooth manifold $L_X$. We call $L_X$ the space of global solutions on $X$.

\item \textbf{Restriction transformation:} For each oriented manifold $X$ of dimension $n$ we associate a smooth function $r_X:L_X\to L_{\partial X}$ such that the image of $r_X$ is a Lagrangian submanifold of $L_{\partial X}$. We call $r_X$ the restriction function associated to $X$.

\noindent \textbf{Gluing transformation:} For each gluing triple $X,\Sigma,\Lambda$ where $X$ is of dimension $n$ we associate an embedding $\bullet_\Sigma:L_{X_{gl}}\to L_X$. We call $\bullet_\Sigma$ the gluing transformation associated to the triple $X,\Sigma,\Lambda$. We assume that the gluing transformation is associative.
\end{enumerate}

\noindent We further assume that the data described above satisfies the following conditions:

\begin{enumerate}
\item \textbf{Orientation sensitivity:} Given an oriented manifold $\Sigma$ of dimension $n-1$, the space of germs of local solutions $L_{-\Sigma}$ associated to $\Sigma$ with its opposite orientation is equal to space $L_\Sigma$ with symplectic form $-\omega_\Sigma$.

\item \textbf{Decomposition along hypersurfaces:} Given a decomposition of an oriented manifold $\Sigma$ of dimension $n-1$ as the union $\Sigma=\Sigma_1\cup_W\Sigma_2$ along a codimension 0 submanifold $W$ of the common boundary $\partial \Sigma_1\cap \partial \Sigma_2$ of $\Sigma_1$ and $\Sigma_2$ we assume that equation $L_\Sigma = L_{\Sigma_1}\times L_{\Sigma_2}$ holds.

\item \textbf{Diagonal axiom:} Given an oriented manifold $\Sigma$ of dimension $n-1$ we assume that the image $im\pi_{L_\Sigma^2}r_{\Sigma\times [0,1]}$ of the composition of the restriction transformation $r_{\Sigma\times [0,1]}$ associated to the cylinder $\Sigma\times [0,1]$ based on $\Sigma$ and the projection $\pi_{L_\Sigma^2}$ of $L_{\partial \Sigma\times [0,1]}$ onto $L_\Sigma^2$ is equal to the diagonal Lagrangian submanifold of $L_\Sigma\times -L_\Sigma$.

\item \textbf{Decomposition along regions:} Given a decomposition of an oriented manifold $X$ of dimension $n$ as the disjoint union $X=X_1\sqcup X_2$ of manifolds $X_1$ and $X_2$ we assume that equation $L_X=L_{X_1}\times L_{X_2}$ holds.

\item \textbf{Gluing:} Given a gluing triple $X,\Sigma,\Lambda$ of dimension $n$, the image of the gluing transformation $\bullet_\Sigma$ is the equalizer of compositions $\pi_{L_\Sigma}r_X$ and $\pi_{L_{-\Sigma}}r_X$ where $\pi_{L_\Sigma}$ and $\pi_{L_{-\Sigma}}$ are the canonical projections of $L_{\partial X}$ onto $L_\Sigma$ and $L_{-\Sigma}$. Finally, in this situation, we assume the following square commutes:

\begin{center}

\begin{tikzpicture}
  \matrix (m) [matrix of math nodes,row sep=3em,column sep=7em,minimum width=2em]
  {
     L_{X_{gl}}& L_X \\
     L_{\partial X} & L_\Lambda \\};
  \path[-stealth]
    (m-1-1) edge node [left] {$r_{X_{gl}}$} (m-2-1)
            edge node [above] {$\bullet_\Sigma$} (m-1-2)
    (m-1-2) edge node [right] {$r_X$} (m-2-2)
    (m-2-1) edge node [below] {$\pi_\Lambda$}(m-2-2);
\end{tikzpicture}
\end{center}

\end{enumerate}

\noindent We clarify the associativity condition imposed on the gluing transformation in the above definition. Given a decomposition of the boundary $\partial X$ of a an oriented manifold $X$ of dimension $n$ as the union

\[\partial X= \Lambda\cup \Sigma\cup -\Sigma\cup \Sigma'\cup-\Sigma'\]

\noindent such that both $X,\Sigma,\Lambda\cup \Sigma'\cup-\Sigma'$ and $X,\Sigma',\Lambda\cup \Sigma\cup-\Sigma$ are gluing triples such that both $X_{gl}^\Sigma,\Sigma',\Lambda_{gl}^\Sigma$ and $X_{gl}^{\Sigma'},\Sigma,W_{gl}^{\Sigma'}$ are also gluing triples, if we write $Y,Y'$ for the glued manifolds associated to these triples then associtivity of the gluing transformation $\bullet$ would vaguely mean that there is a coherent set of transformation between spaces $L_Y$ and $L_{Y'}$ obtained this way. For the sake of simplicity we will assume the existence of the strictest possible of such coherent set of transformations, that is, by the gluing transformation $\bullet_\Sigma$ associated to gluing triple $X,\Sigma,\Lambda$ being associative we will mean that in the situation presented above spaces $L_Y$ and $L_{Y'}$ are always equal. This choice is made out of convenience and we recognize that theories satisfying laxer associativity conditions might be equally if not richer than the ones considered here.

\

\noindent \textsl{Equivariant local field theory}

\

\

\noindent Before presenting the definition of equivariant local field theory we introduce the following notational conventions: Let $n$ be a positive integer. We will write $M^n$ for the groupoid of oriented manifolds of dimension $n$ and oriented diffeomorphisms. We will write $M^\infty$ for the groupoid of possibly infinte dimensional smooth manifolds and their diffeomorphisms. We will consider groupoids of the form $M^n$ and groupoid $M^\infty$ as being symmetric monoidal, with symmetric monoidal structure provided by disjoint union. We will further consider groupoids of the form $M^n$ as being involutive with involution provided by orientation reversal. We will denote by \textbf{Symp} the groupoid of (possibly infinite-dimensional) symplectic manifolds and their symplectomorphisms. We again consider \textbf{Symp} as an involutive symmetric monoidal groupoid, with symmetric monoidal structure now provided by cartesian product and involution by multiplying symplectic forms by -1. We will denote by $\partial_n$ or symply by $\partial$, for the symmetric monoidal functor from $M^n$ to $M^{n-1}$ associating to every manifold $X$ its boundary $\partial X$. Given a positive integer $n$ we will write $M^n_{gl}$ for the groupoid of gluing triples of dimension $n$ and oriented diffeomorphisms preserving gluing data. Thus defined $M^n_{gl}$ admits the structure of a symmetric monoidal groupoid, concrete over $M^n$. We will write $U_n$ or symply $U$ for the forgetful functor of $M^n_{gl}$ on $M^n$. We will write $\ast_n$ or symply $\ast$ for the functor from $M^n_{gl}$ to $M^n$ associating manifold $X_{gl}^\Sigma$ to every gluing triple $X,\Sigma,\Lambda$. Thus defined $\ast$ is again symmetric monoidal.

\

\noindent We now present the definition of equivariant local field theory. Given a positive integer $n$ an equivariant local field theory of dimension $n$ will consist of the following data: 

\begin{enumerate}

\item \textbf{Germs of local solutions:} A symmetric monoidal functor $L^{n-1}$ from $M^{n-1}$ to $\textbf{Symp}$. We call $L^{n-1}$ the functor of spaces of germs of local solutions of $L$. 

\item \textbf{Global solutions:} A symmetric monoidal functor $L^n$ from $M^n$ to $M^\infty$. We call $L^n$ the functor of spaces of global solutions of $L$.

\item \textbf{Restriction transformation:} A monoidal natural transformation $r$ from functor $L^n$ to composition $L^{n-1}\partial$. We assume that for every manifold $X$ of dimension $n$, the image of $r_X$ is a Lagrangian submanifold of $L_{\partial X}$. We call $r$ the restriction transformation of $L$.

\item \textbf{Gluing transformation:} A monoidal natural transformation $\bullet$ from $L^n\ast$ to $L^nU_n$. We call natural transformation $\bullet$ the gluing transformation associated to $L$.

\end{enumerate}

\noindent We assume that the data above is subject to the following conditions:

\begin{enumerate}

\item \textbf{Extended symmetry:} We assume that functor $L^{n-1}$ of spaces of germs of local solutions equalizes functors $U$ and $\ast$.

\item \textbf{Diagonal axiom:} Given an oriented manifold $\Sigma$ of dimension $n-1$ we assume that the image $im\pi_{L_\Sigma^2}r_{\Sigma\times [0,1]}$ of the composition of the restriction transformation $r_{\Sigma\times [0,1]}$ associated to the cylinder $\Sigma\times [0,1]$ based on $\Sigma$ and the projection $\pi_{L_\Sigma^2}$ of $L_{\partial \Sigma\times [0,1]}$ onto $L_\Sigma^2$ is equal to the diagonal Lagrangian submanifold of $L_\Sigma\times -L_\Sigma$.

\item \textbf{Gluing transformation:} We assume that the gluing transformation $\bullet$ is associative. Moreover, we assume that for every gluing triple $X,\Sigma,\Lambda$ of dimension $n$ the natural transformation $\bullet_\Sigma$ is an embedding with image equal to the equalizer of projections $\pi_{L_\Sigma}r_X$ and $\pi_{L_{-\Sigma}}r_X$ of $L_{\partial X}^{n-1}$ onto $ L^{n-1}_\Sigma$ and $L^{n-1}_{-\Sigma}$ and that the following square commutes:

\begin{center}

\begin{tikzpicture}
  \matrix (m) [matrix of math nodes,row sep=3em,column sep=7em,minimum width=2em]
  {
     L_{X_{gl}}& L_X \\
     L_{\partial X} & L_\Lambda \\};
  \path[-stealth]
    (m-1-1) edge node [left] {$r_{X_{gl}}$} (m-2-1)
            edge node [above] {$\bullet_\Sigma$} (m-1-2)
    (m-1-2) edge node [right] {$r_X$} (m-2-2)
    (m-2-1) edge node [below] {$\pi_\Lambda$}(m-2-2);
\end{tikzpicture}
\end{center}

\end{enumerate}

\

\noindent The axioms presented above are a straightforward categorification of the set of axioms defining local field theory. We wish for theories in our framework to satisfy an additional condition, which we now describe. Equivariant local field theories associate similar, i.e. isomorphic spaces of local or global solutions to similar, i.e. diffeomorphic manifolds. We require that equivariant theories satisfy a second order version of this, that is, we require for equivariant local field theories to associate similar solution space isomorphisms to similiar space-time diffeomorphisms. We clarify on what the word similar means in this context. Motivated by [W] we regard two diffeomorphisms between manifold $X,Y$ to be similar if they are isotopic relative to boundary. We regard two solution space isomorphisms to be similar when they are equal. Different choices of the notion of similarity between space-time or solution space isomorphisms will lead to different types of theories. This particular choice of type of theory suits our pourpuses. We now present the final axiom we require from equivariant field theory.

\

\noindent 4. \textbf{Re-parametrization invariance:} Let $\Sigma,\Sigma'$ be manifolds of dimension $n$ or $n-1$. Let $\Phi,\Phi'$ be diffeomorphisms from $\Sigma$ to $\Sigma'$. If $\Phi,\Phi'$ are isotopic relative to boundary then we assume that $L_\Phi$ and $L_{\Phi'}$ are equal.

\

\section{Fibered topological semi-groups}

\noindent In this section we introduce the symmetric monoidal weak double category of fibered topological semi-groups and their bimodules, which will serve as the target for weak topological quantum field theories associated to equivariant local field theories. We begin by presenting the category of objects of this double category.

\

\noindent \textsl{Category of objects}

\

\noindent We will understand for a fibered topological semi-group a semi-category internal to the category of topological spaces and continuous functions for which all its vertical morphisms are endomorphisms. Put another way, a fibered topological semi-group will be a quadruple $(E,X,\pi,\mu)$ where $E,X$ are topological spaces, and $\pi,\mu$ are continuous functions from $E$ to $X$ and from the fiber product $E\times_\pi E$ to $E$ respectively, such that functions $\pi$ and $\mu$ are compatible and function $\mu$ is associative, i.e. such that the following two squares commute

\begin{center}

\begin{tikzpicture}
  \matrix (m) [matrix of math nodes,row sep=3em,column sep=4em,minimum width=2em]
  {
     E\times_\pi E&E&E\times_\pi E\times_\pi E& E\times_\pi E \\
     E&X&E\times_\pi E&E \\};
  \path[-stealth]
    (m-1-1) edge node [left] {$p$}(m-2-1)
		        edge node [above]{$\mu$}(m-1-2)
		(m-1-2) edge node [right]{$\pi$}(m-2-2)
		(m-2-1) edge node [below]{$\pi$}(m-2-2)				
		
		(m-1-3) edge node [left] {$id_E\times \mu$} (m-2-3)
            edge node [above] {$\mu\times id_E$} (m-1-4)
    (m-1-4) edge node [right] {$\mu$} (m-2-4)
    (m-2-3) edge node [below] {$\mu$}(m-2-4);
\end{tikzpicture}
\end{center}

\noindent where $p$ in the first diagram denotes any of the two projections of $E\times_\pi E$ onto $E$. Given a fibered topological semi-group $(E,X,\pi,\mu)$ we will call $E,X,\pi$ and $\mu$ the total space, the base space, the projection, and the fibered product operation of $(E,X,\pi,\mu)$ respectively. We will usually identify fibered topological semi-groups with their corresponding total spaces. Given a fibered topological semi-group $E$ with base and projection $X$ and $\pi$ respectively and an element $x\in X$ the restriction, to the fiber $\pi^{-1}(x)$ of $x$, of the fibered product operation on $E$, provides $\pi^{-1}(x)$ with the structure of a topological semi-group.

\

\noindent The following are examples of fibered topological semi-groups. Let $E$ be a topological space. We can consider $E$ as a topological fibered semi-group by making $E$ its base, the identity $id_E$ of $E$ its projection, and any of the two projections of $E\times_{id_E}E$ onto $E$ as its fiber product operation. Let now $E$ be a topological semi-group with continuous product operation $\mu$. We now consider $E$ as a fibered topological semi-group by making the single point $\left\{\ast\right\}$ its base space, the constant function on $\ast$ the projection and operation $\mu$ as fiber product operation. Less trivial examples include vector bundles, with fiber-wise additive structure as fiber multiplication operation and principal bundles with fiber-wise product operation as fiber multiplication operation.

\

\noindent We will understand for a morphism of fibered topological semi-groups a semi-functor internal to the category of topological spaces and continuous functions. In other words, given fibered topological semi-groups $E$ and $E'$ with base spaces, projections, and fibered product operations $X,X'$, $\pi,\pi'$, and $\mu,\mu'$ respectively we will say that a pair $(\varphi,\psi)$ where $\varphi$ is a smooth function from $E$ to $E'$ and $\psi$ is a continuous function from $X$ to $X'$ is a morphism of fibered topological semi-groups from $E$ to $E'$ if $\varphi$ and $\psi$ intertwine projections and fibered products, that is, if the following two diagrams commute

\begin{center}

\begin{tikzpicture}
  \matrix (m) [matrix of math nodes,row sep=3em,column sep=6em,minimum width=2em]
  {
     E&E'&E\times_\pi E&E'\times_{\pi'} E' \\
     X&X'&E&E' \\};
  \path[-stealth]
    (m-1-1) edge node [left] {$\pi$} (m-2-1)
            edge node [above] {$\varphi$} (m-1-2)
    (m-1-2) edge node [right] {$\pi'$} (m-2-2)
    (m-2-1) edge node [below] {$\psi'$}(m-2-2)
		
		(m-1-3) edge node [left] {$\mu$}(m-2-3)
		(m-1-3) edge node [above]{$\varphi\times_\psi\varphi$}(m-1-4)
		(m-1-4) edge node [right]{$\mu'$}(m-2-4)
		(m-2-3) edge node [below]{$\varphi$}(m-2-4);
\end{tikzpicture}
\end{center}

\noindent We will usually identify morphisms of fibered topological semi-groups with their corresponding component in total spaces. Fibered topological semi-groups and their morphisms organize into a category. We denote this category by \textbf{fsGrp}$_0$. Cartesian product and order reversal on fibered product operations provide \textbf{fsGrp}$_0$ with the structure of an involutive symmetric monoidal category.

\

\noindent \textsl{Category of morphisms}

\

\noindent We will understand for a fibered bimodule over fibered topological semi-groups a bimodule over the corresponding semi-categories internal to topological spaces and continuous functions. Put another way, given fibered topological semi-groups $E$ and $E'$ with base spaces and projections $X,X'$ and $\pi,\pi'$ respectively, a left-right fibered bimodule over $E,E'$ will consist of a topological space $\Omega$, a pair of source and target continuous functions $s,t$ from $\Omega$ to $X$ and $X'$ respectively and associative left and right continuous fibered actions $\lambda:E\times_{\pi,s}\Omega\to\Omega$, $\rho:\Omega\times_{t,\pi'}E'\to \Omega$, compatible with source and target functions $s,t$ and compatible with themselves, that is such that the following diagrams commute

\begin{center}

\begin{tikzpicture}
  \matrix (m) [matrix of math nodes,row sep=3em,column sep=3em,minimum width=2em]
  {
    E\times_\pi E\times_{\pi,s}\Omega&E\times_{\pi,s}\Omega&\Omega\times_{t,\pi'}E'\times_{\pi'}E' &\Omega\times_{t,\pi'}E' \\
    E\times_{\pi,s}\Omega&\Omega&\Omega\times_{t,\pi'}E'&\Omega \\};
  \path[-stealth]
    (m-1-1) edge node [left] {$\mu\times id_\Omega$} (m-2-1)
            edge node [above] {$id_E\times\lambda$} (m-1-2)
    (m-1-2) edge node [right] {$\lambda$} (m-2-2)
    (m-2-1) edge node [below] {$\lambda$}(m-2-2)
		
		(m-1-3) edge node [left] {$id_\Omega\times\mu'$}(m-2-3)
		(m-1-3) edge node [above]{$\rho\times id_{E'}$}(m-1-4)
		(m-1-4) edge node [right]{$\rho$}(m-2-4)
		(m-2-3) edge node [below]{$\rho$}(m-2-4);
\end{tikzpicture}
\end{center}

\begin{center}

\begin{tikzpicture}
  \matrix (m) [matrix of math nodes,row sep=3em,column sep=6em,minimum width=2em]
  {
     E\times_{\pi,s}\Omega&\Omega&E\times_{\pi,s}\Omega&\Omega \\
     \Omega&X&\Omega&X' \\};
  \path[-stealth]
    (m-1-1) edge node [left] {$p_E$} (m-2-1)
            edge node [above] {$\lambda$} (m-1-2)
    (m-1-2) edge node [right] {$t$} (m-2-2)
    (m-2-1) edge node [below] {$\pi$}(m-2-2)
		
		(m-1-3) edge node [left] {$p_\Omega$}(m-2-3)
		(m-1-3) edge node [above]{$\lambda$}(m-1-4)
		(m-1-4) edge node [right]{$t$}(m-2-4)
		(m-2-3) edge node [below]{$t$}(m-2-4);
\end{tikzpicture}
\end{center}

\begin{center}

\begin{tikzpicture}
  \matrix (m) [matrix of math nodes,row sep=3em,column sep=6em,minimum width=2em]
  {
     \Omega\times_{t,\pi'} E'&\Omega&\Omega\times_{t,s'}E'&\Omega \\
     \Omega&X&E'&X' \\};
  \path[-stealth]
    (m-1-1) edge node [left] {$p_\Omega$} (m-2-1)
            edge node [above] {$\rho$} (m-1-2)
    (m-1-2) edge node [right] {$s$} (m-2-2)
    (m-2-1) edge node [below] {$s$}(m-2-2)
		
		(m-1-3) edge node [left] {$p_{E'}$}(m-2-3)
		(m-1-3) edge node [above]{$\rho$}(m-1-4)
		(m-1-4) edge node [right]{$t$}(m-2-4)
		(m-2-3) edge node [below]{$\pi'$}(m-2-4);
\end{tikzpicture}
\end{center}

\begin{center}

\begin{tikzpicture}
  \matrix (m) [matrix of math nodes,row sep=3em,column sep=8em,minimum width=2em]
  {
     E\times_{\pi,s}\Omega\times_{t,\pi'}E'& \Omega\times E' \\
     E\times_{\pi,s}\Omega&\Omega \\};
  \path[-stealth]
    (m-1-1) edge node [left] {$id_E\times\rho$} (m-2-1)
            edge node [above] {$\lambda\times id_{E'}$} (m-1-2)
    (m-1-2) edge node [right] {$\rho$} (m-2-2)
    (m-2-1) edge node [below] {$\lambda$}(m-2-2);
\end{tikzpicture}
\end{center}

\noindent Where $p_\Omega$ in the two diagrams of the second row above denotes the corresponding cartesian projection in both cases. We will usually identify fibered bimodules with their corresponding underlying spaces and will supress notation for left and right actions. We will write left and right actions of elements with matching fibers by concatenation. The notions of left and right fibered bimodule over fibered Lie semi-groups are defined analogously.

%EXAMPLES AND ETC%

\

\noindent We will understand for an equivariant morphism of fibered bimodules an internal morphism of bimodules over semi-categories internal to the category of topological spaces and continuous functions. In other words, given fibered topological semi-groups $E,E',F$ and $F'$, and left-right fibered bimodules $\Omega$ and $\Omega'$ over $E,F$ and $E',F'$ respectively, we will say that a triple $(\varphi,\Phi,\psi)$ where $\varphi$ and $\psi$ are morphisms of fibered topological semi-groups from $E$ to $E'$ and from $F$ to $F'$ respectively, and where $\Phi$ is a continuous function from $\Omega$ to $\Omega'$, is an equivariant morphism from $\Omega$ to $\Omega'$ if the $\Pi,\varphi$ and $\psi$ are compatible with source and target functions and with left and right actions defining the fibered bimodule structure of $\Omega$ and $\Omega'$, that is if the following diagrams commute

\begin{center}

\begin{tikzpicture}
  \matrix (m) [matrix of math nodes,row sep=3em,column sep=7em,minimum width=2em]
  {
     \Omega&\Omega'&\Omega&\Omega' \\
     X&X'&Y&Y' \\};
  \path[-stealth]
    (m-1-1) edge node [left] {$s$} (m-2-1)
            edge node [above] {$\Phi$} (m-1-2)
    (m-1-2) edge node [right] {$s'$} (m-2-2)
    (m-2-1) edge node [below] {$\varphi$}(m-2-2)
		
		(m-1-3) edge node [left] {$t$}(m-2-3)
		(m-1-3) edge node [above]{$\Phi$}(m-1-4)
		(m-1-4) edge node [right]{$t'$}(m-2-4)
		(m-2-3) edge node [below]{$\psi$}(m-2-4);
\end{tikzpicture}
\end{center}

\begin{center}

\begin{tikzpicture}
  \matrix (m) [matrix of math nodes,row sep=3em,column sep=8em,minimum width=2em]
  {
     E\times_{\pi,s}\Omega& \Omega&\Omega\times_{t,\pi} F \\
     E'\times_{\pi',s'}\Omega'&\Omega'&\Omega'\times_{t',\pi'}F' \\};
  \path[-stealth]
    (m-1-1) edge node [left] {$\varphi\times\Phi$} (m-2-1)
            edge node [above] {$\lambda$} (m-1-2)
    (m-1-2) edge node [right] {$\Phi$} (m-2-2)
    (m-2-1) edge node [below] {$\lambda'$}(m-2-2)
		(m-1-3) edge node [above] {$\rho$}(m-1-2)
		(m-1-3) edge node [right] {$\Phi\times \psi$}(m-2-3)
		(m-2-3) edge node [below] {$\rho'$}(m-2-2);
\end{tikzpicture}
\end{center}

\noindent where $X$ and $X'$ denote the base spaces of $E$ and $E'$ respectively, $Y$ and $Y'$ denote the base spaces of $F$ and $F'$, $\pi$ and $\pi'$ denote the projections of $E$ and $F$ and of $E'$ and $F'$ respectively and where $\lambda,\lambda',\rho$ and $\rho'$ denote the left and right defining $\Omega$ and $\Omega'$ respectively.

%EXAMPLES%
\

\noindent Fibered bimodules over fibered topological semi-groups and their morphisms again organize into a category. We now denote this category by \textbf{fsGrp}$_1$. Cartesian product provides \textbf{fsGrp}$_1$ with the structure of a symmetric monoidal category.

\

\noindent \textsl{Structure of double category}

\

\noindent We now provide pair \textbf{fsGrp}$_0$,\textbf{fsGrp}$_1$ with the structure of a symmetric monoidal weak double category. We begin by defining the horizontal identity functor for \textbf{fsGrp}$_0$,\textbf{fsGrp}$_1$.

\

\noindent Given a fibered topological semi-group $E$ with base $X$ and projection $\pi$ we write $i_E$ for the following left-right fibered bimodule over $E$:
 We make the underlying space of $i_E$ to be $E$ itself, we make source and target functions on $i$ to be both equal to projection $\pi$ and we make left and right fibered actions of $E$ on $i_E$ to be those defined by fiber multiplication on the left and on the right respectively. Thus defined $i_E$ is a left-right fibered bimodule over $E$. Let now $\varphi$ be a morphism of fibered topological semi-groups from $E$ to another fibered topological semigroup $E'$. We will write in this case $i_\varphi$ for the triple $(\varphi,\varphi,\varphi)$. Thus defined $i_\varphi$ is an equivariant morphism of fibered bimodules from $i_E$ to $i_{E'}$. Finally, we write $i$ for the functor from \textbf{fsGrp}$_0$ to \textbf{fsGrp}$_1$ associating bimodule $i_E$ to every fibered topological semi-group $E$ and $i_\varphi$ to every morphism of topological semi-groups $\varphi$. Thus defined $i$ is clearly symmetric monoidal and will serve as the horizontal identity functor for pair  \textbf{fsGrp}$_0$,\textbf{fsGrp}$_1$. We now define the horizontal composition symmetric monoidal bifunctor for this pair.

\

\noindent Let $E,E'$ and $E''$ be fibered topological semi-groups with projections $\pi,\pi'$ and $\pi''$ respectively. Let $\Omega$ and $\Omega'$ be left-right fibered bimodules over $E,E'$ and over $E',E''$ respectively. Let $s,t,\lambda$ and $\rho$ be source and target functions and left and right actions for $\Omega$ respectively, and let $s',t',\lambda'$ and $\rho'$ be source and target functions and left and right actions for $\Omega'$ respectively. We write $\Omega\circledast_{E'}\Omega'$ for the product $\Omega\times_{t,s'}\Omega'$ fibered by functions $t$ and $s'$. We provide $\Omega\circledast_{E'}\Omega$ with the structure of a left-right fibered bimodule over $E$ and $E'$. We begin by defining source and target functions for $\Omega\circledast_{E'}\Omega'$. Let $p_\Omega$ and $p_{\Omega'}$ be the projections of $\Omega\circledast_{E'}\Omega'$ onto $\Omega$ and $\Omega'$ respectively. We make source transformation $\overline{s}$ on $\Omega\circledast_{E'}\Omega'$ to be the composition $sp_\Omega$ of $p_\Omega$ and source function $s$ on $\Omega$. Similarly we make target function $\overline{t}$ on $\Omega\circledast_{E'}\Omega'$ to be composition $tp_{\Omega'}$. We now define left and right actions $\overline{\lambda}$ and $\overline{\rho}$ of $E$ on $\Omega\circledast_{E'}\Omega'$ and of $E''$ on $\Omega\circledast_{E'}\Omega'$. Observe that by the compatibility of left action $\lambda$ of $E$ on $\Omega$ and target function $t$ on $\Omega$ function $\lambda\times id_{E'}$ restricts to a function from $E\times_{\pi,s}(\Omega\circledast_{E'}\Omega')$ to $\Omega\circledast_{E'}\Omega'$. We make action $\overline{\lambda}$ of $E$ on $\Omega\circledast_{E'}\Omega'$ to be this function. The right action $\overline{\rho}$ of $E''$ on $\Omega\circledast_{E'}\Omega'$ is defined similarly by right action $\rho$ of $E''$ on $\Omega'$. Compatibility of left action $\overline{\lambda}$ with source function $\overline{s}$ on $\Omega\circledast_{E'}\Omega$ follows from the compatibility of left action $\lambda$ of $E$ on $\Omega$ and source function $s$ of $\Omega$. Compatibility of $\overline{\lambda}$ with target function $\overline{t}$ on $\Omega\circledast_{E'}\Omega'$ follows directly from the corresponding definitions. An analogous argument proves compatibility of right action $\overline{\rho}$ with source and target $\overline{s},\overline{t}$ on $\Omega\circledast_{E'}\Omega'$. Associativity of $\overline{\lambda}$ and $\overline{\rho}$ follow from the associativity of the corresponding actions $\lambda$ and $\rho$. Finally, compatibility of left and right actions $\overline{\lambda}$ and $\overline{\rho}$ on $\Omega\circledast_{E'}\Omega'$ follows directly from their corresponding definitions.  We have proven the following lemma.

\begin{lem}
In the situation above functions $\overline{s},\overline{t},\overline{\lambda}$ and $\overline{\rho}$ provide space $\Omega\circledast_{E'}\Omega'$ with the structure of a left-right fibered bimodule over $E$ and $E''$.
\end{lem}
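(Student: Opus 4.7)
The plan is to verify, in order, that (i) the source and target $\overline{s},\overline{t}$ are continuous maps to the bases of $E$ and $E''$; (ii) the putative actions $\overline{\lambda},\overline{\rho}$ are well-defined as maps out of the relevant fibered products into $\Omega\circledast_{E'}\Omega'$; (iii) $\overline{s},\overline{t}$ are compatible with $\overline{\lambda},\overline{\rho}$; (iv) $\overline{\lambda}$ and $\overline{\rho}$ are individually associative; and (v) $\overline{\lambda}$ and $\overline{\rho}$ commute. Each of these reduces to a diagram chase using the corresponding property already established for $\Omega$ and $\Omega'$ and the universal property of the fibered product $\Omega\times_{t,s'}\Omega'$.

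For (i), continuity of $\overline{s}=sp_\Omega$ and $\overline{t}=t'p_{\Omega'}$ is immediate from continuity of $s,t'$ and of the cartesian projections off the fibered product. For (ii), the key observation, already indicated in the body of the section, is that the compatibility of $\lambda$ with the target function $t$ on $\Omega$ ensures that if $(\omega,\omega')\in\Omega\circledast_{E'}\Omega'$ and $e\in E$ with $\pi(e)=s(\omega)$, then $t(\lambda(e,\omega))=t(\omega)=s'(\omega')$, so the pair $(\lambda(e,\omega),\omega')$ again lies in $\Omega\times_{t,s'}\Omega'$. Thus $\lambda\times\mathrm{id}_{\Omega'}$ does restrict as claimed, and by the universal property of the fibered product this restriction is continuous; the argument for $\overline{\rho}$ is symmetric, using the compatibility of $\rho'$ with $s'$. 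For (iii), compatibility of $\overline{\lambda}$ with $\overline{s}$ is just $s\circ\lambda=\pi$ post-composed with $p_\Omega$, and compatibility of $\overline{\lambda}$ with $\overline{t}$ follows because $\overline{\lambda}$ does not touch the $\Omega'$ coordinate and $\overline{t}=t'p_{\Omega'}$; the dual statements for $\overline{\rho}$ are identical.

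For (iv), associativity of $\overline{\lambda}$ reduces, after projecting onto the $\Omega$ coordinate (where the action happens) and the $\Omega'$ coordinate (where it is trivial), to the associativity square for $\lambda$, which holds by hypothesis; the same is true for $\overline{\rho}$. For (v), the compatibility square for $\overline{\lambda}$ and $\overline{\rho}$ reads $\overline{\rho}(\overline{\lambda}(e,(\omega,\omega')),e'')=(\lambda(e,\omega),\rho'(\omega',e''))=\overline{\lambda}(e,\overline{\rho}((\omega,\omega'),e''))$, which is a tautology since the two actions operate on disjoint coordinates of the pair. The only step that requires more than routine bookkeeping is (ii), and even there the main content is the one observation that $t\circ\lambda=\pi\circ p_E$ on $E\times_{\pi,s}\Omega$ allows the restriction to the fibered product to be formed; once this is in place, all remaining verifications are formal diagram chases, and the lemma follows.
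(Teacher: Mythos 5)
Your proposal is correct and follows essentially the same route as the paper: the paper likewise defines $\overline{s}=sp_\Omega$ and $\overline{t}=t'p_{\Omega'}$ via the cartesian projections, makes the same key observation that compatibility of $\lambda$ with the target function $t$ allows $\lambda\times\mathrm{id}_{\Omega'}$ to restrict to the fibered product (your step (ii)), and then reduces compatibility, associativity, and the commutation of the two actions to the corresponding axioms for $\Omega$ and $\Omega'$, exactly as in your steps (iii)--(v). One small slip worth fixing: in your closing paragraph the key identity should read $t\circ\lambda=t\circ p_\Omega$ (target-compatibility of the left action), not $t\circ\lambda=\pi\circ p_E$, which is the source-compatibility identity; your step (ii) states and uses the correct one.
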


\noindent We call the bimodule in the above lemma the horizontal composition of fibered bimodules $\Omega$ and $\Omega'$ relative to fibered topological semi-group $E'$. We now extend the definition of horizontal composition of fibered bimodules to the horizontal composition of equivariant morphisms of fibered bimodules. Let $i\in\left\{1,2\right\}$. Let $E_i,E'_i$ and $E''_i$ be fibered topological semi-groups. Let $\Omega_i,\Omega'_i$ be left-right fibered bimodules over $E_i,E'_i$ and over $E'_i,E''_i$ respectively. Let $(\varphi,\Phi,\psi)$ and $(\psi,\Psi,\phi)$ be equivariant morphisms from $\Omega_1$ to $\Omega_2$ and from $\Omega'_1$ to $\Omega'_2$ respectively. In that case from equivariance of pair $(\Phi,\psi)$ with respect to the right action $\rho_1$ of $E'_1$ on $\Omega_1$ and the right action $\rho_2$ of $E'_2$ on $\Omega_2$  and from equivariance of pair $(\psi,\Psi)$ with respect to left action $\lambda'_1$ of $E'_1$ on $\Omega'_1$ and left action $\lambda'_2$ of $E'_2$ on $\Omega'_2$ it follows that the cartesian product $\Phi\times\Psi$ restricts to a continuous function from $\Omega_1\circledast_{E'_1}\Omega'_1$ to $\Omega_2\circledast_{E'_2}\Omega'_2$. We write $\Phi\circledast_\psi\Psi$ for this function. Now, equivariance of the pair $(\varphi,\Phi)$ with respect to left action $\lambda_1$ of $E_1$ on $\Omega_1$ and left action $\lambda_2$ of $E_2$ on $\Omega_2$ together with equivariance of the pair $(\Psi,\phi)$  with respect to the right action $\rho'_1$ of $E''_1$ on $\Omega'_1$ and right action $\rho'_2$ of $E''_2$ on $\Omega'_2$ it follows that triple $(\varphi,\Phi\circledast_\psi\Psi,\phi)$ is an equivariant morphism of fibered bimodules from $\Omega_1\circledast_{E'_1}\Omega'_1$ to $\Omega_2\circledast_{E'_2}\Omega'_2$. We call triple $(\varphi,\Phi\circledast_\psi\Psi,\phi)$ the horizontal composition of $(\varphi,\Phi,\psi)$ and $(\psi,\Psi,\phi)$. We now prove the followin theorem.

\begin{thm}
Horizontal identity funtor $i$ and horizontal composition bifunctor $\circledast$ provide pair \textbf{fsGrp}$_0$,\textbf{fsGrp}$_1$ with the structure of a weak symmetric monoidal double category, with strictly associative horizontal composition and lax identity transformations.
\end{thm}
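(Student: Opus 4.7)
The plan is to unpack the definition of a (unitally) weak double category and verify, piece by piece, the data listed there: (i) bifunctoriality of $\circledast$, (ii) strict associativity of $\circledast$, (iii) construction of lax left and right unitor natural transformations whose components are equivariant morphisms of fibered bimodules, (iv) the pentagon and triangle coherence conditions, and (v) compatibility of the cartesian product with $i$ and $\circledast$ that upgrades the whole structure to a symmetric monoidal one. Items (i) and (ii) should be essentially formal consequences of the corresponding universal properties for fiber products in \textbf{Top}; item (iii) is where the word ``lax'' comes in; and item (iv)--(v) is where the bookkeeping lives.

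For bifunctoriality of $\circledast$, I first observe that the construction in the lemma above already produces, on objects, a left-right fibered bimodule. Interchange with vertical composition amounts to the identity $(\Phi_2 \circ \Phi_1) \circledast_{\psi_2 \circ \psi_1} (\Psi_2 \circ \Psi_1) = (\Phi_2 \circledast_{\psi_2} \Psi_2) \circ (\Phi_1 \circledast_{\psi_1} \Psi_1)$, which follows at once from the universal property of the fiber products defining $\Omega_i \circledast_{E'_i} \Omega'_i$. Strict associativity of $\circledast$ reduces to the observation that the two iterated fiber products $(\Omega \times_{t,s'}\Omega') \times_{t', s''}\Omega''$ and $\Omega \times_{t,s'}(\Omega' \times_{t',s''}\Omega'')$ are literally the same subspace of $\Omega \times \Omega' \times \Omega''$, and all source, target, and action data agree on the nose because they are inherited through cartesian projections. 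Associativity on morphisms is the same statement applied to $\Phi \times \Phi' \times \Phi''$.

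For the lax unitors I would define, for every bimodule $\Omega$ over $E, E'$, a left unitor $\ell_\Omega \colon i_E \circledast_E \Omega \to \Omega$ to be the restriction of the left action $\lambda$, and a right unitor $r_\Omega \colon \Omega \circledast_{E'} i_{E'} \to \Omega$ to be the restriction of the right action $\rho$. These are equivariant morphisms of fibered bimodules over the identity morphisms of $E$ and $E'$ by the compatibility axioms for $\lambda$, $\rho$ with source, target, and each other, and naturality in $\Phi$ follows from the equivariance of $\Phi$ with respect to $\lambda, \rho$. In general neither map is a homeomorphism, since the left action $E \times_{\pi,s}\Omega \to \Omega$ need not be injective nor surjective; this is exactly the reason the unitors are only lax. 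The triangle coherence $(\ell_{\Omega'} \circledast_{\mathrm{id}} \mathrm{id}_{\Omega'}) = (\mathrm{id}_\Omega \circledast_{\mathrm{id}} r_{\Omega'})$ composed through $i_{E'}$ becomes the identity between the two ways of contracting a middle copy of $E'$, which follows from the associativity diagram for the $E'$-actions on $\Omega$ and $\Omega'$. The pentagon is automatic because horizontal composition is strictly associative.

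The symmetric monoidal structure is obtained by transporting the cartesian product on \textbf{Top} across the definitions. Concretely, $(\Omega_1 \times \Omega_2) \circledast_{E'_1 \times E'_2}(\Omega'_1 \times \Omega'_2)$ is canonically equal to $(\Omega_1 \circledast_{E'_1}\Omega'_1) \times (\Omega_2 \circledast_{E'_2}\Omega'_2)$, and $i_{E \times E'} = i_E \times i_{E'}$; the braiding and the associator of the monoidal product commute on the nose with $\circledast$ and $i$. I expect the main obstacle to be keeping the bookkeeping for the lax unitors and their coherence with the symmetric monoidal structure honest: one must check that the unitors are natural in morphisms of $E$ and $E'$ (so that $\ell, r$ are genuine transformations of functors \textbf{fsGrp}$_1 \to$ \textbf{fsGrp}$_1$), and that the tensor product of two lax unitors agrees with the lax unitor of the tensor product. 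Once these compatibilities are verified — all of them reducing to equalities of continuous maps out of pullbacks and so following from universality — the assembled data gives the required weak symmetric monoidal double category with strictly associative horizontal composition.
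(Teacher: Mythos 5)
Your construction agrees with the paper's proof in almost every respect: the same lax unitors (the components $L^\Omega=(id_E,\lambda,id_{E'})$ and $R^\Omega=(id_E,\rho,id_{E'})$ obtained by restricting the left and right actions), the same strict-associativity argument via associativity of fiber products, the same naturality argument from equivariance, and the same cartesian-product symmetric monoidal structure. The genuine gap is your coherence step (iv). The triangle you assert — that $r_\Omega\circledast\mathrm{id}_{\Omega'}$ and $\mathrm{id}_\Omega\circledast\ell_{\Omega'}$ agree as maps $\Omega\circledast i_{E'}\circledast\Omega'\to\Omega\circledast\Omega'$ — is false for this structure, and no appeal to associativity of the actions can repair it. The point is that $\Omega\circledast_{E'}\Omega'$ is the fiber product $\Omega\times_{t,s'}\Omega'$ over the \emph{base space} of $E'$, not a tensor product over $E'$: nothing in it identifies $(\omega e,\omega')$ with $(\omega,e\omega')$. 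Concretely, take $E=E'=E''$ to be a nontrivial topological semi-group $S$ fibered over a point and $\Omega=\Omega'=S$ with the multiplication actions; your two maps are then $(a,b,c)\mapsto(ab,c)$ and $(a,b,c)\mapsto(a,bc)$ from $S^3$ to $S^2$, which differ. This is exactly the place where the structure deviates from a standard pseudo double category: with a pullback-style composition and non-invertible unitors, the usual triangle axiom is not available.

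What the paper verifies instead (and what its phrase ``triangle equations'' refers to) is the two-sided absorption law: the two composites
\[
i_E\circledast\Omega\circledast i_{E'}\xrightarrow{\;\mathrm{id}\,\circledast\, R^{\Omega}\;} i_E\circledast\Omega\xrightarrow{\;L^{\Omega}\;}\Omega,
\qquad
i_E\circledast\Omega\circledast i_{E'}\xrightarrow{\;L^{\Omega}\circledast\,\mathrm{id}\;} \Omega\circledast i_{E'}\xrightarrow{\;R^{\Omega}\;}\Omega
\]
agree, which on elements is the identity $e(\omega e')=(e\omega)e'$. This is precisely the compatibility axiom between the left and right actions in the definition of a fibered bimodule (the last square in that definition), not the associativity axiom of either single action. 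So the coherence datum you need to state and check is this one, and the axiom to cite is the left-right compatibility square; with that replacement, the rest of your proposal matches the paper's argument.
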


\begin{proof}
We wish to prove that horizontal identity functor $i$ and horizontal composition bifunctor $\circledast$ provide the pair \textbf{fsGrp}$_0$,\textbf{fsGrp}$_1$ with the structure of a symmetric monoidal double category.

We make source and target functors for pair \textbf{fsGrp}$_0$,\textbf{fsGrp}$_1$ to be the obvious choices. These functors are clearly symmetric monoidal and are compatible with horizontal identity functor $i$ and horizontal composition bifunctor $\circledast$.

We now construct left and right horizontal identity transformations $L$ and $R$. Let $E$ and $E'$ be fibered topological semi-groups. Let $\Omega$ be a left-right fibered bimodule over $E$ and $E'$ with left and right actions $\lambda$ and $\rho$. Write $L^\Omega$ for the triple $(id_E,\lambda,id_{E'})$ and write $R^\Omega$ for the triple $(id_e,\rho,id_{E'})$. Thus defined $L^\Omega$ and $R^\Omega$ are equivariant morphisms of fibered bimodules from $E\circledast_E\Omega$ and from $\Omega\circledast_{E'}E'$ respectively to $\Omega$. Left equivariance of $L^\Omega$ and right equivariance of $R^\Omega$ follow from associativity of left action $\lambda$ of $E$ on $\Omega$ and from right action $\rho$ of $E'$ on $\Omega$ respectively. Right equivariance of $L^\Omega$ and left equivariance of $R^\Omega$ both follow directly from the corresponding definitions. Write now $L$ and $R$ for the collection of all equivariant morphisms of the form $L^\Omega$ and for the collection of all equivariant morphisms of the form $R^\Omega$ respectively. Thus defined $L$ and $R$ are natural transformations. This follows from the definition of equivariance of morphisms of fibered bimodules and from the way horizontal composition of equivariant morphisms of fibered bimodules was constructed. Moreover, $L$ and $R$ are clearly monoidal. 

Strict associativity of horizontal composition bifunctor $\circledast$ follows from associativity of the operation of taking fibered products of topological spaces. Finally, the triangle equations for left and right identity transformations $L$ and $R$ follow directly from the compatibility condition for left and right actions on fibered bimodules. This concludes the proof. 
\end{proof}

\noindent We will write \textbf{fsGrp} for the double category described in the theorem above. We now improve on the condition of left and right identity transformations for \textbf{fsGrp} being lax by considering a sub-double category of \textbf{fsGrp} having strong identity transformations as target double category for topological quantum field theories associated to equivariant local field theories.

\

\noindent Let $E$ be a topological fibered semi-group with projection $\pi$ and fiber product operation $\mu$. We will say that $E$ is a rigid fibered semi-group if $\mu$ defines a homeomorphism between $E\times_\pi E$ and $E$. Observe that considered as fibered topological semi-groups a vector bundle is rigid only when fibers are of dimension 0 and a principal bundle is rigid only when the group action is trivial. We write $\tilde{\mbox{\textbf{fsGrp}}}_0$ for the full symmetric monoidal subcategory of the category of objects \textbf{fsGrp}$_0$ of \textbf{fsGrp} generated by the collection of all rigid fibered topological semi-groups. Let now $E$ and $E'$ be rigid fibered topological semi-groups and let $\Omega$ be a left-right fibered bimodule over $E$ and $E'$ with source and target functions $s$ and $t$ and left and right actions $\lambda$ and $\rho$. We will say that in this case $\Omega$ is a rigid fibered bimodule if $\lambda$ is a homeomorphism between $E\times_{\pi,s}\Omega$ and $\Omega$ and $\rho$ is a homeomorphism between $\Omega\times_{t,\pi}E'$ and $\Omega$. We now write $\tilde{\mbox{\textbf{fsGrp}}}_1$ for the full symmetric monoidal subcategory of the category of morphisms \textbf{fsCat}$_1$ of \textbf{fsCat} generated by rigid fibered bimodules. It is easily seen that the horizontal identity $i_E$ of a fibered topological semi-group $E$ is a rigid fibered bimodule if and only if the fibered semi-group $E$ is itself rigid and that the horizontal composition $\Omega\circledast\Omega'$ of two fibered bimodules $\Omega$ and $\Omega'$ over rigid fibered semi-groups is rigid when each of $\Omega$ and $\Omega'$ is rigid. The horizontal identity functor $i$ and the horizontal composition bifunctor $\circledast$ on \textbf{fsCat} thus restrict to a symmetric monoidal functor from $\tilde{\mbox{\textbf{fsGrp}}}_0$ to $\tilde{\mbox{\textbf{fsGrp}}}_1$ and to a symmetric monoidal bifunctor from $\tilde{\mbox{\textbf{fsGrp}}}_1\times_{\tilde{\mbox{\textbf{fsGrp}}}_0}\tilde{\mbox{\textbf{fsGrp}}}_1$ to $\tilde{\mbox{\textbf{fsGrp}}}_0$. With this structure $\tilde{\mbox{\textbf{fsGrp}}}$ is thus a symmetric monoidal sub-double category of \textbf{fsGrp} for which by the rigidity condition left and irght horizontal identity transformations are isomorphisms. We will make $\tilde{\mbox{\textbf{fsGrp}}}$ the target double category for topological quantum field theories associated to equivariant field theories in the next section.

\section{Cylinder topological quantum field theories}

\noindent In this section we present the cylinder cornered topological quantum field theory construction. We explain how to associate, to every equivariant local field theory, a symmetric monoidal double functor from the symmetric monoidal double category of cornered cobordisms of the corresponding dimension to the symmetric monoidal double category of rigid fibered topological semi-groups defined in the previous section.

\

\noindent \textsl{Functor of objects}

\

\noindent We begin by defining functors of objects of cornered topological quantum field theories associated to equivariant local field theories. We associate, to every equivariant local field theory, a symmetric monoidal functor from the category of objects of the symmetric monoidal double category of cornered cobordisms of the corresponding dimension, to the symmetric monoidal category of rigid fibered topological semi-groups. We begin by defining this functor on objects.

\

\noindent Let $n$ be a positive integer. Let $L$ be an equivariant local field theory of dimension $n$. We explain how $L$ associates to every oriented manifold of dimension $n-1$ a fibered topological semi-group, which we will denote by $E^L_\Sigma$. Let $\Sigma$ be an oriented manifold of dimension $n-1$. We will make the total space of fibered topological semi-group $E^L_\Sigma$ to be the topological space underlying the space $L_{\Sigma\times [0,1]}$ of global solutions on the cylinder $\Sigma\times [0,1]$ based on $\Sigma$. We will make the base space of fibered topological semi-group $E^L_\Sigma$ to be the topological space underlying the space of germs of local solutions $L_\Sigma$ on manifold $\Sigma$. By the diagonal axiom in the definition of local field theory the image of the composition $p_{L_\Sigma^2}r_{\Sigma\times [0,1]}$ of restriction transformation $r_{\Sigma\times [0,1]}$ associated to $\Sigma\times [0,1]$ and the projection of the space of germs of local solutions $L_{\partial (\Sigma\times [0,1])}$ of the boundary $\partial (\Sigma\times [0,1])$ of $\Sigma\times [0,1]$ on the space of germs of local solutions $L_\Sigma^2$ on the boundary piece corresponding to the top and bottom copies of $\Sigma$. We will make the projection $\pi$ of $E^L_\Sigma$ to be the composition of $p_{L_\Sigma^2}r_{\Sigma\times [0,1]}$ and any of the left or right projections of $L_\Sigma^2$ onto $L_\Sigma$. In order to define the fiber product operation $\mu$ on $E^L_\Sigma$ now consider a diffeomorphism $\Phi$ from the glued cylinder $(\Sigma\times[0,1])\cup_\Sigma(\Sigma\times [0,1])$ to the cylinder $\Sigma\times [0,1]$ respecting fibers and covering the identity diffeomorphism on the bottom and top copies of $\Sigma$ on both manifolds, that is, making the following diagram commute

\begin{center}

\begin{tikzpicture}
  \matrix (m) [matrix of math nodes,row sep=4em,column sep=7em,minimum width=2em]
  {
     \Sigma\times[0,1]\cup_\Sigma\Sigma\times [0,1]&\Sigma\times [0,1] \\
     \Sigma&\Sigma \\};
  \path[-stealth]
    (m-1-1) edge node [above] {$\Phi$} (m-1-2)
            edge node [left] {$p_\Sigma$} (m-2-1)
    (m-1-2) edge node [right] {$p_\Sigma$} (m-2-2)
		(m-2-1) edge node [below] {$id_\Sigma$} (m-2-2);
\end{tikzpicture}
\end{center}

\noindent where $p_\Sigma$ denotes either one of the top or bottom projections of both $(\Sigma\times [0,1])\cup_\Sigma(\Sigma\times [0,1])$ and $\Sigma\times [0,1]$ onto $\Sigma$. We now define fiber product operation $\mu$ on $E^L_\Sigma$ through the following commutative diagram

\begin{center}

\begin{tikzpicture}
  \matrix (m) [matrix of math nodes,row sep=4em,column sep=7em,minimum width=2em]
  {
     E^L_\Sigma\times_\pi E^L_\Sigma&E^L_\Sigma \\
     E^L_\Sigma& \\};
  \path[-stealth]
    (m-1-1) edge node [above] {$\mu$} (m-1-2)
            edge node [left] {$\bullet^{-1}_\Sigma$} (m-2-1)
    (m-2-1) edge node [right] {$L_\Phi$} (m-1-2);
\end{tikzpicture}
\end{center}

\noindent where $\bullet_\Sigma$ denotes the gluing transformation associated by $L$ to the gluing triple formed by the disjoint union of the two copies of the cylinder $\Sigma\times [0,1]$ and the top and bottom copies of $\Sigma$ in the boundary $\partial (\Sigma\times [0,1])$ of $\Sigma\times [0,1]$. We prove the following lemma.

\begin{lem}
In the situation described above the fiber product operation $\mu$ defined on $E^L_\Sigma$ is well defined and provides $E^L_\Sigma$ with the structure of a rigid fibered topological semi-group.
\end{lem}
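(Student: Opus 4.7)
The plan is to decompose the statement into four checks: (a) the operation $\mu$ is unambiguously defined, (b) the square with $\pi$ on the right and $p$ on the left commutes, (c) $\mu$ is associative, and (d) $\mu$ is a homeomorphism between $E^L_\Sigma\times_\pi E^L_\Sigma$ and $E^L_\Sigma$.

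For (a), I need to verify two things. First, that $\bullet_\Sigma^{-1}$ is a well defined continuous function on $E^L_\Sigma\times_\pi E^L_\Sigma$. Write $X$ for the disjoint union of two copies of the cylinder $\Sigma\times[0,1]$; then $L_X=L_{\Sigma\times[0,1]}^2$ by decomposition along regions, and by the gluing axiom $\bullet_\Sigma\colon L_{X_{gl}}\to L_X$ is an embedding whose image is the equalizer of the two restriction projections $\pi_{L_\Sigma}r_X$ and $\pi_{L_{-\Sigma}}r_X$ sending $(x,y)\in L_X$ to the top restriction of $x$ and the bottom restriction of $y$, respectively. By the diagonal axiom, top and bottom restriction coincide on $L_{\Sigma\times[0,1]}$, both being equal to $\pi$, and therefore the equalizer is precisely the fiber product $E^L_\Sigma\times_\pi E^L_\Sigma$. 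Since an embedding is a homeomorphism onto its image, $\bullet_\Sigma^{-1}$ is defined and continuous on this fiber product. Second, I need to argue that $L_\Phi$ is independent of the choice of collapsing diffeomorphism $\Phi$. Any two such $\Phi,\Phi'$ respect the fiber structure and cover the identity on the top and bottom copies of $\Sigma$; a standard argument on cylinders produces an isotopy relative to boundary between them, so the re-parametrization invariance axiom 4 gives $L_\Phi=L_{\Phi'}$. The main obstacle is precisely the identification of the fiber product with the equalizer, and this is where the diagonal axiom is essential.

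For (b), I apply the commutative square in the gluing axiom to the triple above. After composing with $L_\Phi$ and the projection onto $L_\Sigma$, the outer boundary of the glued object is $\Lambda=$ (bottom of first cylinder) $\sqcup$ (top of second cylinder), and the naturality of $r$ together with the choice of $\Phi$ covering the identity on top and bottom identifies $\pi\mu(x,y)$ with the top (equivalently bottom) restriction of either $x$ or $y$, which is $\pi\circ p(x,y)$ by the diagonal axiom. For (c), associativity of $\mu$ reduces to associativity of the gluing transformation $\bullet_\Sigma$ and to the fact that two different ways of composing three cylinders produce manifolds connected by a fiber-preserving diffeomorphism covering the identity on boundary, which is isotopic to the canonical one by axiom 4.

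Finally, for (d), rigidity, both factors in $\mu=L_\Phi\circ\bullet_\Sigma^{-1}$ are homeomorphisms: $\bullet_\Sigma^{-1}$ because $\bullet_\Sigma$ is an embedding with image equal to $E^L_\Sigma\times_\pi E^L_\Sigma$ as established in (a), and $L_\Phi$ because $\Phi$ is a diffeomorphism and $L^n$ is a functor from the groupoid $M^n$ into $M^\infty$, hence sends isomorphisms to isomorphisms. The composition is therefore a homeomorphism from $E^L_\Sigma\times_\pi E^L_\Sigma$ onto $E^L_\Sigma$, which is exactly the rigidity condition.
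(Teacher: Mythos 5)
Your proof is correct and takes essentially the same route as the paper's: independence of the choice of $\Phi$ via isotopy relative to boundary plus the re-parametrization invariance axiom, compatibility of $\mu$ with $\pi$ via the gluing square and naturality/monoidality of $r$, associativity from associativity of $\bullet$ together with the isotopy argument for the two triple-cylinder compositions, and rigidity from the gluing transformation axiom. Your step (a) --- identifying the image of $\bullet_\Sigma$ with the fiber product $E^L_\Sigma\times_\pi E^L_\Sigma$ by combining the equalizer condition with the diagonal axiom --- is left implicit in the paper, so on that point your write-up is actually the more careful one.
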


\begin{proof}
Let $n$ be a positive integer. Let $L$ be an equivariant local field theory of dimension $n$. Let $\Sigma$ be an oriented manifold of dimension $n-1$. We wish to prove that fiber product operation $\mu$ defined in the paragraph above does not depend on the specific choice of diffeomorphism $\Phi$ and that thus defined $\mu$ provides $E^L_\Sigma$ with the structure of a rigid fibered topological semi-group. 

Let $\Psi$ be another diffeomorphism from $(\Sigma\times [0,1])\cup_\Sigma (\Sigma\times [0,1])$ to the cylinder $\Sigma\times [0,1]$, preserving fibers and covering the identity diffeomorphism of $\Sigma$. The condition that both $\Phi$ and $\Psi$ preserve fibers and cover the identity diffeomorphism of $\Sigma$ implies that $\Phi$ and $\Psi$ are isotopically equivalent relative to boundary [wm] and thus by the axiom of re-parametrization invariance in the definition of $L$ the images $L_\Phi$ and $L_\Psi$ of $\Phi$ and $\Psi$ under the functor of global solutions of $L$ are equal. The commutative diagrams defining fiber product operation $\mu$ on $E^L_\Sigma$ relative to both $\Phi$ and $\Psi$ are thus equal. We conclude that the definition of $\mu$ is independent of the specific choice of diffeomorphism $\Phi$ satisfying the prescribed conditions.

We now prove that thus defined $\mu$ provides $E^L_\Sigma$ with the structure of a rigid fibered topological semi-group. We begin by rpoving the ocmpatibility of fiber product operation $\mu$ and projection $\pi$. Consider the following diagram

\begin{center}

\begin{tikzpicture}
  \matrix (m) [matrix of math nodes,row sep=4em,column sep=3em,minimum width=2em]
  {
     L_{\Sigma\times [0,1]}\times _{L_\Sigma}L_{\Sigma\times [0,1]}&L_{(\Sigma\times [0,1])\cup_\Sigma(\Sigma\times [0,1])}&L_{\Sigma\times [0,1]}\\
     L_{\partial(\Sigma\times [0,1])}^2&L_{\partial[(\Sigma\times [0,1])\cup_\Sigma(\Sigma\times [0,1])]}&L_{\partial(\Sigma\times [0,1])} \\};
  \path[-stealth]
    (m-1-1) edge node [above] {$\bullet^{-1}_\Sigma$} (m-1-2)
            edge node [left] {$r^2$} (m-2-1)
		(m-1-2) edge node [left] {$r$} (m-2-2)
		(m-2-1) edge node [below] {$id$} (m-2-2)
		(m-1-2) edge node [above] {$L_\Phi$} (m-1-3)
		(m-1-3) edge node [right] {$r$} (m-2-3)
		(m-2-2) edge node [below] {$id$} (m-2-3);
\end{tikzpicture}
\end{center}

\noindent Monoidality of restriction transformation $r$ together with compatibility of $r$ and gluing transformation $\bullet$ implies that the square on the left-hand side of the above diagram is commutative. Commutativity of the square on the right-hand side of the diagram follows directly from naturality of restriction transformation $r$ and the fact that diffeomorphism $\Phi$ covers the identity diffeomorphism of $\Sigma$. The above diagram is thus commutative. Compatibility of fiber product operation $\mu$ and projection $\pi$ on $E^L_\Sigma$ follows from this and from extended monoidality of the functor of germs of local solutions of $L$.

We now prove associativity of fiber product operation $\mu$. Consider now the follwoing diagram.

\begin{center}

\begin{tikzpicture}
  \matrix (m) [matrix of math nodes,row sep=4em,column sep=6em,minimum width=2em]
  {
     L_{(\Sigma\times [0,1])\cup_\Sigma (\Sigma\times [0,1])\cup_\Sigma (\Sigma\times [0,1]) }&L_{(\Sigma\times [0,1])\cup_\Sigma (\Sigma\times [0,1])}\\
     L_{(\Sigma\times [0,1])\cup_\Sigma (\Sigma\times [0,1])}&L_{\Sigma\times [0,1]}\\};
  \path[-stealth]
    (m-1-1) edge node [above] {$L_{\Phi\times id_{\Sigma\times [0,1]}}$} (m-1-2)
            edge node [left] {$L_{id_{\Sigma\times [0,1]}\times \Phi}$} (m-2-1)
		(m-1-2) edge node [left] {$L_\Phi$} (m-2-2)
		(m-2-1) edge node [below] {$L_\Phi$} (m-2-2);
\end{tikzpicture}
\end{center}

\noindent The commutativity of the above diagram follows from the fact that diffeomorphisms $\Phi(\Phi\times id_{\Sigma\times [0,1]})$ and $\Phi(id_{\Sigma\times [0,1]}\times \Phi)$ both preserve fibers and cover the identity diffeomorphism of the top and bottom copies of $\Sigma$ and are thus isotopically equivalent relative to boundary. Associativity of the fiber product operation follows directly from this together with the fact that restriction transformation $r$ is natural and monoidal. This proves that fiber product operation $\mu$ defined on $E^L_\Sigma$ the structure of a fibered topological semi-group. The fact that $\mu$ is a homeomorphism follows directly from the gluing transformation axiom in the definition of $L$. With this structure $E^L_\Sigma$ is thus a rigid fibered topological semigroup. This concludes the proof.
\end{proof}

\noindent We call fibered topological semi-group associated to $\Sigma$ above the cylinder fibered topological semi-group associated to $\Sigma$ by $L$. The following lemma extends the cylinder fibered topological semi-group construction to oriented diffeomorphisms.

\begin{lem}
Let $n$ be a positive integer. Let $L$ be an equivariant local field theory of dimension $n$. Let $\Sigma$ and $\Sigma'$ be oriented manifolds of dimension $n-1$. Let $\varphi$ be an oriented diffeomorphism from $\Sigma$ to $\Sigma'$. In that case the pair $(L_{\varphi\times id_{[0,1]}},L_\varphi)$ is an isomorphism of fibered topological semi-groups from the cylinder fibered topological semi-group $E^L_\Sigma$ associated to $\Sigma$ to the cylinder fibered topological semi-group $E^L_{\Sigma'}$ associated to $\Sigma'$.
\end{lem}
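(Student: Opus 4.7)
The plan is to check that the pair $(L_{\varphi\times id_{[0,1]}},L_\varphi)$ satisfies the two commutative squares defining a morphism of fibered topological semi-groups from $E^L_\Sigma$ to $E^L_{\Sigma'}$, and then to obtain invertibility by running the same construction on $\varphi^{-1}$. Continuity of both components is automatic because $L^{n-1}$ and $L^n$ are functors into groupoids, so $L_\varphi$ and $L_{\varphi\times id_{[0,1]}}$ are in particular homeomorphisms.

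For the projection square I would invoke naturality and monoidality of the restriction transformation $r$. The projection $\pi$ of $E^L_\Sigma$ factors as $r_{\Sigma\times[0,1]}$ followed by the canonical projection $L_{\partial(\Sigma\times[0,1])}\to L_\Sigma^2$ supplied by the hypersurface decomposition axiom and then by one of the projections onto $L_\Sigma$, and analogously for $\pi'$ on $E^L_{\Sigma'}$. Because the restriction of $\varphi\times id_{[0,1]}$ to the top and bottom boundary copies of $\Sigma$ equals $\varphi$, the naturality square for $r$ at $\varphi\times id_{[0,1]}$, combined with monoidality of $L^{n-1}$ on the boundary decomposition, immediately yields $\pi'\circ L_{\varphi\times id_{[0,1]}}=L_\varphi\circ\pi$.

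For the fiber product square I would exploit the choice-independence of $\mu$ established in the previous lemma. Fix a fiber-preserving diffeomorphism $\Phi$ used to define $\mu$ on $E^L_\Sigma$ and set
\[\Phi'=(\varphi\times id_{[0,1]})\circ\Phi\circ\bigl((\varphi^{-1}\times id_{[0,1]})\cup(\varphi^{-1}\times id_{[0,1]})\bigr),\]
where the bracketed map is the diffeomorphism from $(\Sigma'\times[0,1])\cup_{\Sigma'}(\Sigma'\times[0,1])$ to $(\Sigma\times[0,1])\cup_\Sigma(\Sigma\times[0,1])$ obtained by gluing two copies of $\varphi^{-1}\times id_{[0,1]}$ along $\varphi^{-1}$. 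Then $\Phi'$ is fiber-preserving and covers $id_{\Sigma'}$, so by the previous lemma it may be used to define $\mu'$ on $E^L_{\Sigma'}$. Functoriality of $L^n$ relates $L_{\Phi'}$ to $L_\Phi$ by conjugation with $L_{\varphi\times id_{[0,1]}}$, while naturality of the gluing transformation $\bullet$ at the morphism of gluing triples induced by $\varphi\times id_{[0,1]}$ on the disjoint union of two copies of $\Sigma\times[0,1]$ gives $\bullet^{-1}_{\Sigma'}\circ(L_{\varphi\times id_{[0,1]}}\times_{L_\varphi}L_{\varphi\times id_{[0,1]}})=L_{(\varphi\times id_{[0,1]})\cup(\varphi\times id_{[0,1]})}\circ\bullet^{-1}_\Sigma$. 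Composing the two identities produces the desired equality $\mu'\circ(L_{\varphi\times id_{[0,1]}}\times_{L_\varphi}L_{\varphi\times id_{[0,1]}})=L_{\varphi\times id_{[0,1]}}\circ\mu$.

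Finally, applying the same construction to $\varphi^{-1}$ yields a morphism $(L_{\varphi^{-1}\times id_{[0,1]}},L_{\varphi^{-1}})$, and functoriality of $L^n$ and $L^{n-1}$ makes these two pairs mutually inverse. The main obstacle is the fiber product square: the operations $\mu$ and $\mu'$ are defined via auxiliary diffeomorphisms $\Phi$ and $\Phi'$ with no a priori relationship, and reconciling them requires both the explicit compatible choice of $\Phi'$ above and the choice-independence statement of the preceding lemma.
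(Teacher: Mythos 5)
Your proof is correct and follows essentially the same route as the paper's: the projection square is handled via naturality (and monoidality) of the restriction transformation $r$, and the fiber-product square by conjugating the auxiliary diffeomorphism $\Phi$ by $\varphi\times id_{[0,1]}$ --- your $\Phi'$ is precisely the paper's $\Psi$, which the paper defines implicitly by a commuting square rather than by an explicit formula --- combined with naturality of the gluing transformation $\bullet$ and the choice-independence established in the preceding lemma. The only cosmetic difference is that you spell out invertibility by running the construction on $\varphi^{-1}$, which the paper leaves implicit since $L_\varphi$ and $L_{\varphi\times id_{[0,1]}}$ are images of isomorphisms under functors.
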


\begin{proof}
Let $n$ be a positive integer. Let $L$ be an equivariant local field theory of dimension $n$. Let $\Sigma$ and $\Sigma'$ be oriented manifolds of dimension $n-1$. Let $\varphi$ be an oriented diffeomorphism from $\Sigma$ to $\Sigma'$. We wish to prove that in that case pair $(L_{\varphi\times id_{[0,1]}},L_\varphi)$ is an isomorphism of fibered topological semi-groups from the cylinder fibered topological semi-group $E^L_\Sigma$ associated to $\Sigma$ to the cylinder fibered topological semi-group $E^L_{\Sigma'}$ associated to $\Sigma'$.

We begin by proving that pair $(L_{\varphi\times id_{[0,1]}},L_\varphi)$ is compatible with projections $\pi$ and $\pi'$ of $E^L_\Sigma$ and $E^L_{\Sigma'}$. Consider the following diagram

\begin{center}

\begin{tikzpicture}
  \matrix (m) [matrix of math nodes,row sep=3em,column sep=8em,minimum width=2em]
  {
     L_{\Sigma\times [0,1]}&L_{\Sigma'\times [0,1]} \\
     L_{\partial(\Sigma\times [0,1])}&L_{\partial(\Sigma'\times [0,1])} \\
		L_\Sigma^2&L_{\Sigma'}^2\\};
  \path[-stealth]
    (m-1-1) edge node [above] {$L_{\varphi\times id_{[0,1]}}$} (m-1-2)
            edge node [left] {$r_{\Sigma\times [0,1]}$} (m-2-1)
    (m-1-2) edge node [right] {$r_{\Sigma'\times [0,1]}$} (m-2-2)
		(m-2-1) edge node [below] {$L_{\tilde{\varphi\times id_{[0,1]}}}$} (m-2-2)
		(m-2-1) edge node [left] {$p_{L_\Sigma}^2$} (m-3-1)
		(m-2-2) edge node [right] {$p_{L_{\Sigma'}}^2$} (m-3-2)
		(m-3-1) edge node [below] {$L_\varphi^2$} (m-3-2);
\end{tikzpicture}
\end{center}

\noindent where $\tilde{\varphi\times id_{[0,1]}}$ denotes the restriction of $\varphi\times id_{[0,1]}$ to the boundary $\partial(\Sigma\times [0,1])$ of $\Sigma\times [0,1]$. The square at the top of the diagram is commutative by naturality of restriction transformation $r$ while the commutativity of the square at the bottom is obvious. The diagram is thus commutative. The commutativity of this diagram implies the compatibility of the pair $(L_{\varphi\times id_{[0,1]}},L_\varphi)$ with projections $\pi$ and $\pi'$ of $E^L_\Sigma$ and $E^L_{\Sigma'}$ respectively. We now prove that pair $(L_{\varphi\times id_{[0,1]}},L_\varphi)$ is compatible with fiber product operations $\mu$ and $\mu'$ on $E^L_\Sigma$ and $E^L_{\Sigma'}$ respectively. 

Let $\Phi$ be a collar diffeomprhism from the double cylinder $(\Sigma\times [0,1])\cup_\Sigma(\Sigma\times [0,1])$ to the cylinder $\Sigma\times [0,1]$ relative to a collar neighborhood of $\Sigma$ in $(\Sigma\times [0,1])\cup_\Sigma(\Sigma\times [0,1])$. Let $\Psi$ be a collar diffeomprhism from the double cylinder $(\Sigma'\times [0,1])\cup_{\Sigma'}(\Sigma'\times [0,1])$ to the cylinder $\Sigma'\times [0,1]$ relative to a collar neighborhood of $\Sigma'$ in $(\Sigma'\times [0,1])\cup_{\Sigma'}(\Sigma'\times [0,1])$ making the following diagram commute

\begin{center}

\begin{tikzpicture}
  \matrix (m) [matrix of math nodes,row sep=3em,column sep=9em,minimum width=2em]
  {
     (\Sigma\times [0,1])\cup_\Sigma(\Sigma\times [0,1])&(\Sigma'\times [0,1])\cup_{\Sigma'}(\Sigma'\times [0,1]) \\
     \Sigma\times [0,1]&\Sigma'\times [0,1] \\};
  \path[-stealth]
    (m-1-1) edge node [above] {$(\varphi\times id_{[0,1]})\cup_\varphi(\varphi\times id_{[0,1]})$} (m-1-2)
            edge node [left] {$\Phi$} (m-2-1)
    (m-1-2) edge node [right] {$\Psi$} (m-2-2)
		(m-2-1) edge node [below] {$\varphi\times id_{[0,1]}$} (m-2-2);
\end{tikzpicture}
\end{center}

\noindent Thus defined $\Psi$ clearly preserves fibers and dominates the identity diffeomorphism on the top and bottom copies of $\Sigma$ in $(\Sigma\times [0,1])\cup_\Sigma(\Sigma\times [0,1])$ and $\Sigma\times [0,1]$. From the diagram above we obtain commutativity of the botoom square of diagram

\begin{center}

\begin{tikzpicture}
  \matrix (m) [matrix of math nodes,row sep=3em,column sep=8em,minimum width=2em]
  {
     L_{\Sigma\times [0,1]}\times_{L_\Sigma}&L_{\Sigma'\times [0,1]}\times_{L_{\Sigma'}}L_{\Sigma'\times [0,1]} \\
     L_{(\Sigma\times [0,1])\cup_\Sigma(\Sigma\times [0,1])}&L_{(\Sigma'\times [0,1])\cup_{\Sigma'}(\Sigma'\times [0,1])} \\
		L_{\Sigma\times [0,1]}&L_{\Sigma'\times [0,1]}\\};
  \path[-stealth]
    (m-1-1) edge node [above] {$L_{\varphi\times id_{[0,1]}}\times L_{\varphi\times id_{[0,1]}}$} (m-1-2)
            edge node [left] {$\bullet_\Sigma^{-1}$} (m-2-1)
    (m-1-2) edge node [right] {$\bullet_{\Sigma'}^{-1}$} (m-2-2)
		(m-2-1) edge node [below] {$L_{(\varphi\times id_{[0,1]})\cup_\varphi(\varphi\times id_{[0,1]})}$} (m-2-2)
		(m-2-1) edge node [left] {$L_\Phi$} (m-3-1)
		(m-2-2) edge node [right] {$L_{\Phi'}$} (m-3-2)
		(m-3-1) edge node [below] {$L_{\varphi\times id_{[0,1]}}$} (m-3-2);
\end{tikzpicture}
\end{center}

\noindent The commutativity of the top square of the above diagram follows from naturality of gluing transformation $\bullet$. The diagram above is thus commutative. The commutativity of this diagram proves compatibility of pair $(L_{\varphi\times id_{[0,1]}},L_\varphi)$ with fiber product operation $\mu$. Pair 
$(L_{\varphi\times id_{[0,1]}},L_\varphi)$ is thus an isomorphism of fibered topological semi-groups from $E^L_\Sigma$ to $E^L_{\Sigma'}$. This concludes the proof.
\end{proof}

\noindent In the situation of the lemma above we will write $E^L_\varphi$ for the pair $(L_{\varphi\times id_{[0,1]}},L_\varphi)$ associated to oriented diffeomorphism $\varphi$. We will call isomorphism $E^L_\varphi$ the cylinder isomorphism associated to $\varphi$ by $L$. Given an equivariant local field theory $L$ of dimension $n$ we write $E^L_0$ for the functor from the groupoid $\tilde{\textbf{Cob}(n)}_0$ of oriented manifolds of dimension $n-1$ and their diffeomorphisms to the category $\tilde{\mbox{\textbf{fsGrp}}}_0$ of fibered topological semi-groups associating to every oriented manifold $\Sigma$ of dimension $n-1$ its cylinder fibered topological semi-group $E^L_\Sigma$ and associating cylinder isomorphism $E^L_\varphi$ to every oriented diffeomorphism $\varphi$. We prove the following lemma.

\begin{lem}
Let $n$ be a positive integer. Let $L$ be an equivariant local field theory of dimension $n$. In that case $E^L_0$ as defined above is symmetric monoidal and involutive.
\end{lem}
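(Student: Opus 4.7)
The plan is to verify in turn that $E^L_0$ is a functor, that it respects the symmetric monoidal structures, and that it intertwines the involutions on $\tilde{\textbf{Cob}(n)}_0$ and $\tilde{\mbox{\textbf{fsGrp}}}_0$. Functoriality is immediate from the fact that $L^n$ and $L^{n-1}$ are functors: for composable diffeomorphisms $\varphi:\Sigma\to\Sigma'$ and $\psi:\Sigma'\to\Sigma''$ we get
\[E^L_{\psi\varphi}=(L_{(\psi\varphi)\times \mathrm{id}_{[0,1]}},L_{\psi\varphi})=(L_{\psi\times\mathrm{id}_{[0,1]}}L_{\varphi\times\mathrm{id}_{[0,1]}},L_\psi L_\varphi)=E^L_\psi E^L_\varphi,\]
and $E^L_{\mathrm{id}_\Sigma}=\mathrm{id}_{E^L_\Sigma}$.

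For monoidality, the main observation is the identity $(\Sigma\sqcup\Sigma')\times [0,1]=(\Sigma\times [0,1])\sqcup(\Sigma'\times [0,1])$. Applying the monoidal functor $L^n$ gives $L_{(\Sigma\sqcup\Sigma')\times [0,1]}=L_{\Sigma\times [0,1]}\times L_{\Sigma'\times [0,1]}$, and monoidality of $L^{n-1}$ gives the analogous identity for base spaces. Monoidality of the restriction transformation $r$ then shows that the projection of $E^L_{\Sigma\sqcup\Sigma'}$ is the cartesian product of the projections of $E^L_\Sigma$ and $E^L_{\Sigma'}$. For the fiber product operation I would exploit the freedom guaranteed by the preceding lemma to pick the collar diffeomorphism for $\Sigma\sqcup\Sigma'$ to be the disjoint union $\Phi_\Sigma\sqcup\Phi_{\Sigma'}$; monoidality of the gluing transformation $\bullet$ together with monoidality of $L^n$ then yields $\mu_{\Sigma\sqcup\Sigma'}=\mu_\Sigma\times\mu_{\Sigma'}$. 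The same argument on morphisms shows $E^L_{\varphi\sqcup\varphi'}=E^L_\varphi\times E^L_{\varphi'}$, and the symmetry isomorphism is inherited from the symmetry of $L^n$ on $(\Sigma\sqcup\Sigma')\times [0,1]$.

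The substance lies in the involutive condition, i.e.\ that $E^L_{-\Sigma}=-E^L_\Sigma$ in the sense that total and base spaces coincide while the fiber product is reversed. The key tool is the oriented diffeomorphism $\tau_\Sigma:\Sigma\times [0,1]\to -\Sigma\times [0,1]$ given by $(\sigma,t)\mapsto(\sigma,1-t)$: the orientation reversal of the $[0,1]$ factor cancels the reversal on $\Sigma$, and $\tau_\Sigma$ interchanges the top and bottom boundary copies of $\Sigma$. Combining $L_{\tau_\Sigma}$ with the orientation sensitivity identification $L_{-\Sigma}=L_\Sigma$ produces the required identifications of total and base spaces for $E^L_{-\Sigma}$ and $E^L_\Sigma$. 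Because $\tau_\Sigma$ swaps top and bottom, under these identifications the projection used to define $E^L_{-\Sigma}$ corresponds to the opposite projection for $E^L_\Sigma$, and any collar diffeomorphism used to define $\mu_{-\Sigma}$ corresponds via $\tau_\Sigma$ to one that glues the two copies of $\Sigma\times [0,1]$ in the reverse order. Naturality of $r$ and monoidality of $\bullet$, combined with independence of $\mu$ from the choice of collar diffeomorphism established in the preceding lemma, then give $\mu_{-\Sigma}=\mu_\Sigma^{\mathrm{op}}$, and a parallel check on diffeomorphisms handles morphisms.

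The anticipated obstacle is the orientation bookkeeping in the third step: one must track carefully how the oriented structures on $\Sigma\times [0,1]$ and $-\Sigma\times [0,1]$, the induced orientations on their boundaries, and the choice of top-versus-bottom projection interact, so that the single time-reversal diffeomorphism $\tau_\Sigma$ translates simultaneously into the identification of underlying data and into the order reversal on the semi-group operation. Once that bookkeeping is in place, every remaining verification reduces to monoidality and naturality of the transformations $r$ and $\bullet$ together with the functoriality of $L^n$ and $L^{n-1}$.
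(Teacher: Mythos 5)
Your handling of functoriality and symmetric monoidality is fine and matches the paper's (which disposes of monoidality in one sentence, citing monoidality of $L^{n-1}$, $L^n$, $r$ and $\bullet$). The involutivity argument, however, is where you diverge, and as written it proves something strictly weaker than what the lemma asserts and the paper later uses. The paper proves an \emph{equality} of fibered topological semi-groups $E^L_{-\Sigma}=E^{L\,\mathrm{op}}_\Sigma$: it asserts outright that total space, base space and projection of $E^L_{-\Sigma}$ coincide with those of $E^L_\Sigma$ (orientation sensitivity for the base; no comparison map between total spaces is ever introduced), and then derives $\mu_{-\Sigma}=\mu_\Sigma^{\mathrm{op}}$ from the observation that the gluing triple computing $\mu_{-\Sigma}$ is the gluing triple computing $\mu_\Sigma$ with the order of the two cylinder factors reversed, so that the symmetry $\sigma$ of the monoidal structure of $L$ enters. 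Your route instead transports the structure of $E^L_\Sigma$ along $L_{\tau_\Sigma}$, where $\tau_\Sigma(\sigma,t)=(\sigma,1-t)$.

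That transport is the gap. $L_{\tau_\Sigma}$ is merely a homeomorphism from $L_{\Sigma\times[0,1]}$ to $L_{-\Sigma\times[0,1]}$, in general nowhere near the identity: time reversal acts nontrivially on spaces of solutions, and re-parametrization invariance cannot trivialize it, because $\tau_\Sigma$ exchanges the two boundary copies of $\Sigma$ and hence is not isotopic relative to boundary to anything that would force $L_{\tau_\Sigma}=\mathrm{id}$. What your construction actually yields is therefore an \emph{isomorphism} of fibered topological semi-groups $(L_{\tau_\Sigma},\mathrm{id}_{L_\Sigma})$ between $E^{L\,\mathrm{op}}_\Sigma$ and $E^L_{-\Sigma}$, i.e.\ involutivity up to natural isomorphism, not the statement "total and base spaces coincide while the fiber product is reversed" that you claim to have proved. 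The distinction is not cosmetic: immediately after this lemma the paper uses the strict equality to turn the left $E^L_{\partial M}=E^L_\Sigma\times E^L_{-\Sigma'}$-module $\Omega^L_M$ into a left-right bimodule over $E^L_\Sigma$ and $E^L_{\Sigma'}$, and strictness there is what keeps $E^L$ an honest double functor rather than one with extra coherence data; with only your isomorphism, every such step would require transport of structure. (One may fairly object that the paper's own assertion that the total spaces are equal is terse, but its mechanism for the order reversal never passes through a non-identity map between them, whereas yours essentially must.) To close the gap you would need to argue directly on the underlying data, as the paper does, rather than through the time-reversal diffeomorphism.
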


\begin{proof}
Let $n$ be a positive integer. Let $L$ be an equivariant local field theory of dimension $n$. We wish to prove that functor $E_0^L$ defined in the paragraph above is symmetric monoidal and involutive.

Symmetric monoidality of $E^L_0$ follows directly from symmetric monoidality of both the functor of spaces of germs of local solutions of $L$ and the functor of global solutions of $L$ together with monoidality of restriction transformation $r$ of $L$. We We prove that $E^L_0$ is involutive. Let $\Sigma$ be an oriented manifold of dimension $n-1$. We prove that fibered topological semi-groups $E^L_{-\Sigma}$ and $E^{L op}_\Sigma$ are equal, where $-\Sigma$ denotes manifold $\Sigma$ with its opposite orientation and $E^{L op}_\Sigma$ denotes fibered topological semi-group $E^L_\Sigma$ with the order of fiber product operation reversed. Total spaces, base spaces, and projections of both $E^L_{-\Sigma}$ and $E^{L op}_\Sigma$ are equal to total space, base space, and projection of $E^L_\Sigma$ respectively. Let $\mu_{-\Sigma}$ denote the fiber product operation on $E^L_{-\Sigma}$ and let $\mu_\Sigma^{op}$ denote the fiber product operation of $E^L_\Sigma$ with the order of the factors reversed. We prove that fiber product operations $\mu_{-\Sigma}$ and $\mu_\Sigma^{op}$ are equal. The disjoint union of two copies of the cylinder $-\Sigma\times [0,1]$ on $-\Sigma$ and the disjoint union of two copies of the cylinder $\Sigma\times [0,1]$ on $\Sigma$ are equal as manifolds but posses opposite orientation. The manifold $(-\Sigma\times [0,1])\cup_{-\Sigma}(-\Sigma\times [0,1])$ obtained by gluing these two copies the corresponding two copies of $-\Sigma\times [0,1]$ is thus equal to the manifold obtained from gluing operation on the gluing triple formed by the two copies of $\Sigma\times [0,1]$ and $\Sigma$ with gluing ordered reversed. Let $\sigma$ denote the symmetry operation on $L^2_{\Sigma\times [0,1]}$. From the condition of functor $L$ being symmetric monoidal it follows that every diffeomorphism $\Phi$ from $(-\Sigma\times [0,1])\cup_{-\Sigma}(-\Sigma\times [0,1])$ induces a diffeomorphism $L_\Phi$ from $\sigma L_{\Sigma\times [0,1]}^2$ to $L_{\Sigma\times [0,1]}$. Equality of $\mu_{-\Sigma}$ and $\mu_\Sigma^{op}$ follows from this by choosing diffemorphims $\Phi$ in such a way that implements the fiber product operation on $E^L_{-\Sigma}$. This concludes the proof.
\end{proof}

\noindent We call $E^L_0$ the functor of cylinder fibered topological semi-groups associated to $L$. Functors of cylinder topological semi-groups will be object functors of cornered topological quantum field theories associated to equivariant local field theories.

\

\noindent \textsl{Functor of morphisms}

\

\noindent We now define functors of morphisms of cornered topological quantum field theories associated to equivariant local field theories. We now associate, to every equivariant local field theory a symmetric monoidal functor from the category of morphisms of the symmetric monoidal double category of cornered cobordisms of the corresponding dimension to the category of rigid fibered bimodules over rigid fibered topological semi-groups. We being by defining this functor on objects.

\

\noindent \noindent Let $n$ be a positive integer. Let $L$ be an equivariant local field theory of dimension $n$. We will now associate to every oriented cobordism $M$ of dimension $n$ from an oriented manifold $\Sigma$ to an oriented manifold $\Sigma'$ a left-right fibered bimodule $\Omega^L_M$ over cylinder fibered topological semi-groups $E^L_\Sigma$ and $E^L_{\Sigma'}$. We begin with a special case of this. Let $M$ be an oriented manifold of dimension $n$. We associate to $M$ a left fibered module $\Omega^L_M$ over the cylinder fibered topological semi-group $E^L_{\partial M}$ associated to the boundary $\partial M$ of $M$ as follows:  We make the underlying space of $\Omega^L_M$ to be the space underlying the space of global solutions $L_M$ associated to $M$. We will make source function $s$ on $\Omega^L_M$ to be the restriction transformation $r_M$ associated to $M$. To define the left action $\lambda$ of $E^L_{\partial M}$ on $\Omega^L_M$ we first introduce the concept of collar diffeomorphism. Given a codimension 0 properly embedded submanifold $\Sigma$ of the boundary $\partial M$ of an oriented manifold $M$ and a bicollar neighborhood $U$ of $\Sigma$ in $M\cup_\Sigma (\Sigma\times [0,1])$, containing $\Sigma\times [0,1]$ and extending the product structure of $\Sigma\times [0,1]$ we will say that an oriented diffeomorphism

\[\Phi: M\cup_\Sigma (\Sigma\times [0,1])\to M\]

\noindent is a collar diffeomorphism with respect to $U$ if $\Phi$ restricts to the identity function outside $U$ and if $\Phi$ preserves fibers in $U$ and covers the identity diffeomorphism from the top to the bottom copies of $\Sigma$ in $\Sigma\times [0,1]$. Fix now now a collar diffeomorphism $\Phi$ from $(\partial M\times [0,1])\cup_{\partial M}M$ to $M$ with respect to a given collar neighborhood of $\partial M$ on $(\partial M\times [0,1])\cup_{\partial M}M$. In that case we make $\lambda$ to be defined by the following commutative diagram

\begin{center}

\begin{tikzpicture}
  \matrix (m) [matrix of math nodes,row sep=4em,column sep=7em,minimum width=2em]
  {
     E^L_{\partial M}\times_{\pi,s}\Omega^L_M&\Omega^L_M\ \\
     L_{(\partial M\times [0,1])\cup_{\partial M}M}& \\};
  \path[-stealth]
    (m-1-1) edge node [above] {$\lambda$} (m-1-2)
            edge node [left] {$\bullet_{\partial M}$} (m-2-1)
    (m-2-1) edge node [right] {$L_\Phi$} (m-1-2);
\end{tikzpicture}
\end{center}

\noindent where $\pi$ in the fibered product $E^L_{\partial M}\times _{\pi,s}\Omega^L_M$ denotes the projection of fibered semi-group $E^L_{\partial M}$ and where $\bullet_{\partial M}$ denotes the gluing transformation associated to the gluing triple formed by the disjoint union of $M$ and $\partial M\times [0,1]$ and the bottom and top copies of $\partial M$ in $\partial M\times [0,1]$. We prove the following lemma.

\begin{lem}
In the situation above left action $\lambda$ of cylinder fibered topological semi-group $E^L_{\partial M}$ on $\Omega^L_M$ is well defined and provides $E^L_M$ with the structure of a rigid left fibered module over $E^L_{\partial M}$.
\end{lem}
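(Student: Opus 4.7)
The plan is to mirror the structure of the previous lemma on $\mu$, handling in turn (i) independence of $\lambda$ from the choice of collar diffeomorphism $\Phi$, (ii) compatibility of $\lambda$ with source function $s = r_M$, (iii) associativity of $\lambda$ with respect to the fiber product operation $\mu$ on $E^L_{\partial M}$, and (iv) rigidity, i.e.\ that $\lambda$ is a homeomorphism.

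For (i), I would let $\Phi'$ be a second collar diffeomorphism from $(\partial M \times [0,1]) \cup_{\partial M} M$ to $M$ satisfying the collar conditions. Both $\Phi$ and $\Phi'$ restrict to the identity outside a fixed bicollar neighborhood of $\partial M$, preserve fibers inside this neighborhood, and cover the identity diffeomorphism on the exposed copies of $\partial M$; any two such diffeomorphisms are isotopic relative to boundary. Re-parametrization invariance of $L$ then forces $L_\Phi = L_{\Phi'}$, and the defining diagram yields the same $\lambda$ regardless of the choice.

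For (ii), I would assemble a two-square diagram analogous to the one used for $\pi$ and $\mu$ in the earlier lemma: the left square commutes by compatibility of the restriction transformation $r$ with the gluing transformation $\bullet$ together with monoidality of $r$, while the right square commutes by naturality of $r$ combined with the fact that $\Phi$ covers the identity on the exposed copy of $\partial M$. For (iii), I would observe that the two iterated collar diffeomorphisms implementing the left action in the two possible orders on $(\partial M \times [0,1]) \cup_{\partial M} (\partial M \times [0,1]) \cup_{\partial M} M$ both preserve fibers and cover the identity on the relevant bottom and top copies of $\partial M$, hence are isotopic relative to boundary; re-parametrization invariance sends them to equal maps in $M^\infty$, and commutativity of the associativity square then follows from naturality and monoidality of $r$ and $\bullet$, exactly as for $\mu$.

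For (iv), rigidity follows from the gluing transformation axiom: $\bullet_{\partial M}$ is an embedding whose image is the equalizer of the two projections of the relevant boundary solution space, and the choice of source $s = r_M$ and projection $\pi$ on $E^L_{\partial M}$ identifies this equalizer precisely with the fibered product $E^L_{\partial M} \times_{\pi, s} \Omega^L_M$; composing the resulting homeomorphism with the diffeomorphism $L_\Phi$ shows that $\lambda$ is a homeomorphism onto $\Omega^L_M$. The main obstacle I anticipate is step (iii): one must carefully organize nested bicollar neighborhoods on the triple cylinder $(\partial M \times [0,1]) \cup_{\partial M} (\partial M \times [0,1]) \cup_{\partial M} M$ so that the two iterated collar diffeomorphisms fit into a common product structure, enabling the isotopy-rel-boundary comparison that re-parametrization invariance requires.
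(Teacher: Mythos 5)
Your proposal is correct and follows essentially the same route as the paper's proof: independence of the choice of collar diffeomorphism via isotopy rel boundary plus re-parametrization invariance, compatibility with the source $s=r_M$ via the two-square diagram built from monoidality/naturality of $r$ and its compatibility with $\bullet$, associativity via comparing the two compositions of collar diffeomorphisms on $(\partial M\times[0,1])\cup_{\partial M}(\partial M\times[0,1])\cup_{\partial M}M$, and rigidity of $\lambda$ from the gluing transformation axiom. Your step (iv) actually spells out the identification of the image of $\bullet_{\partial M}$ with the fibered product $E^L_{\partial M}\times_{\pi,s}\Omega^L_M$ in more detail than the paper, which simply asserts it.
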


\begin{proof}

Let $n$ be a positive integer. Let $L$ be an equivariant local field theory of dimension $n$. Let $M$ be an oriented manifold of dimension $n$. We wish to prove that left action $\lambda$ of $E^L_{\partial M}$ on $\Omega^L_M$ defined in the previous paragraph does not depend on the specific choice of collar diffeomorphism $\Phi$, that thus defined $\lambda$ provides $\Omega^L_M$ with the structure of a left fibered bimodule over $E^L_{\partial M}$, and that with this structure $\Omega^L_M$ is rigid.

Let $\Psi$ be another collar diffeomorphism from $(\partial M\times [0,1])\cup_{\partial M}M$ to $M$ with respect to a collar neighborhood of $\partial M$ on $(\partial M\times [0,1])\cup_{\partial M}M$. The condition that both $\Phi$ and $\psi$ are collar diffeomorphisms on $(\partial M\times [0,1])\cup_{\partial M}M$ implies that $\Phi$ and $\Psi$ are isotopoically equivalent relative to boundary [wm] and thus by the axiom of re-parametrization invariance their images $L_\Phi$ and $L_\Psi$ under the functor of spaces of global solutions of $L$ are equal. Commutative diagrams defining left action $\lambda$ relative to $\Phi$ and $\Psi$ are thus equal. We conclude that the definition of $\lambda$ is independent of the specific choice of collar diffeomorphism $\Phi$.

We now prove that thus defined left action $\lambda$ provides $\Omega^L_M$ with the structure of a left fibered module over $E^L_{\partial M}$. We begin by proving the compatibility of left action $\lambda$ of $E^L_{\partial M}$ of $\Omega^L_M$ and source function $s$ on $\Omega^L_M$. Consider the following diagram

\begin{center}

\begin{tikzpicture}
  \matrix (m) [matrix of math nodes,row sep=4em,column sep=4em,minimum width=2em]
  {
     L_{\partial M\times [0,1]}\times_{L_{\partial M}}L_M&L_{(\partial M\times [0,1])\cup_{\partial M}M}&L_M \\
     L_{\partial M}^3& L_{\partial M}&L_{\partial M}\\};
  \path[-stealth]
    (m-1-1) edge node [above] {$\bullet^{-1}_{\partial M}$} (m-1-2)
            edge node [left] {$r_{\partial M\times [0,1]}\times r_M$} (m-2-1)
    (m-2-1) edge node [below] {$p$} (m-2-2)
		(m-1-2) edge node [left] {$r_{(\partial M\times [0,1])\cup_{\partial M}M}$} (m-2-2)
		(m-1-2) edge node [above] {$L_\Phi$}(m-1-3)
		(m-1-3) edge node [right] {$r_M$}(m-2-3)
		(m-2-2)edge node [below] {$id_{L_{\partial M}}$}(m-2-3);
\end{tikzpicture}
\end{center}

%Who is p?%

\noindent where $p$ denotes the projection of $L^3_{\partial M}$ onto the copy of $L_{\partial M}$ corresponding to the factor appearing in the first entry of the product. Arguments analogous to the ones used in the third paragraph of the proof of lemma 1.1 prove that the diagram above commutes. Compatibility of left action $\lambda$ of $E^L_{\partial M}$ on $\Omega^L_M$ and source function $s$ on $\Omega^L_M$ follows from this and from extended monoidality of functor of spaces of germs of local solutions of $L$. We now prove associativity of left action $\lambda$ of $E^L_{\partial M}$ on $\Omega^L_M$.

Let $\Phi$ be a collar diffeomorphism from $(\partial M\times [0,1])\cup_{\partial M}M$ to $M$ relative to a collar neighborhood of $\partial M$ on $(\partial M\times [0,1])\cup_{\partial M}M$. Let $\Psi$ be a collar diffeomorphism from $(\partial M\times [0,1])\cup_{\partial M}(\partial M\times [0,1])$ to $\partial M\times [0,1]$ relative to a collar neighborhood of $\partial M$ on $(\partial M\times [0,1])\cup_{\partial M}(\partial M\times [0,1])$. Compositions $\Phi(id_{\partial M\times [0,1]}\cup_{id_{\partial M}}\Phi)$ and $\Phi(\Psi\cup_{\partial M}id_M)$ both clearly preserve fibers and cover the identity diffeomorphism of $\partial M$. From this it follows that the following diagram is commutative 

\begin{center}

\begin{tikzpicture}
  \matrix (m) [matrix of math nodes,row sep=4em,column sep=7em,minimum width=2em]
  {
     L_{(\partial M\times [0,1])\cup_{\partial M}(\partial M\times [0,1])\cup_{\partial M}M}&L_{(\partial M\times [0,1])\cup_{\partial M}M}\ \\
     L_{(\partial M\times [0,1])\cup_{\partial M}M}& L_M \\};
  \path[-stealth]
    (m-1-1) edge node [above] {$L_{(id_{\partial M\times [0,1]})\cup_{\partial M}\Phi}$} (m-1-2)
            edge node [left] {$L_{(\Psi\cup_{\partial M}id_M)}$} (m-2-1)
    (m-2-1) edge node [below] {$L_\Phi$} (m-2-2)
		(m-1-2) edge node [right] {$L_\Phi$} (m-2-2);
\end{tikzpicture}
\end{center}

\noindent Associativity of left action $\lambda$ of $E^L_{\partial M}$ on $\Omega^L_M$ follows directly from this together with the fact that restriction transformation $r$ is natural and monoidal. This proves that left action $\lambda$ of $E^L_{\partial M}$ on $\Omega^L_M$ defines the structure of a left fibered module $\Omega^L_M$ over $E^L_{\partial M}$. The fact that left action $\lambda$ of $E^L_{\partial M}$ on $\Omega^L_M$ is a homeomorphism follows directly from the gluing transformation axiom in the definition of $L$. We conclude that with the structure described $\Omega^L_M$ is a rigid left fibered module over cylinder topological semi-group $E^L_{\partial M}$ associated to the boundary $\partial M$ of $M$. This concludes the proof.
\end{proof}

\noindent Let now $\Sigma$ and $\Sigma'$ be oriented manifolds of dimension $n-1$ and let $M$ be an oriented cobordism from $\Sigma$ to $\Sigma'$. In that case from the above lemma and from lemma ? it follows that $\Omega^L_M$ admits the structure of a left module over fibered topological semi-group $E^L_\Sigma\times E^{L^{op}}_{\Sigma'}$ where $E^{L^{op}}_{\Sigma'}$ denotes fibered topological semi-group $E^L_{\Sigma'}$ with the order on the product operation reversed. The component on $E^{L^{op}}_{\Sigma'}$ of this action induces the structure of a right fibered module over $E^L_{\Sigma'}$ on $\Omega^L_M$. The obvious fact that the composition of a collar diffeomorphim from $(\Sigma\times [0,1])\cup_\Sigma M$ to $M$ and a collar diffeomorphism from $M\cup_\Sigma' (\Sigma'\times [0,1])$ to $M$ defined by disjoint collar neighborhoods of $\Sigma$ and $\Sigma'$ is isotopically equivalent relative to boundary to a collar diffeomorphism from $(\partial M\times [0,1])\cup_{\partial M}M$ to $M$ implies that the actions on $\Omega^L_M$ corresponding the components on $E^L_\Sigma$ and $E^L_{\Sigma'}$ commute. We conclude that in this case $\Omega^L_M$ admits the structure of a left-right fibered bimodule over $E^L_\Sigma$ and $E^L_{\Sigma'}$. We call $\Omega^L_M$ the cylinder fibered bimodule associated to cobordism $M$ by $L$. We now extend the the cylinder fibered bimodule construction to equivariant diffeomorphisms. We first prove the following lemma.

\begin{lem}
Let $n$ be a positive integer. Let $L$ be an equivariant local field theory of dimension $n$. Let $M$ and $N$ be oriented manifolds of dimension $n$. Let $\Phi$ be a diffeomorphism from $M$ to $N$. Denote by $\varphi$ the restriction of $\Phi$ to the boundary $\partial M$ of $M$. In that case the pair $(L_\varphi,L_\Phi)$ is an equivariant isomorphism of fibered left modules from $\Omega^L_M$ to $\Omega^L_N$.
\end{lem}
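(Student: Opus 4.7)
The plan is to verify the two equivariance conditions required for the pair $(L_\varphi,L_\Phi)$ — understood as consisting of the cylinder isomorphism $E^L_\varphi=(L_{\varphi\times id_{[0,1]}},L_\varphi)$ from the preceding lemma together with the homeomorphism $L_\Phi$ between underlying spaces of modules — to constitute an equivariant morphism of left fibered modules from $\Omega^L_M$ to $\Omega^L_N$, and then to observe that each component is invertible.

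First I would check compatibility with source functions. The source function on $\Omega^L_M$ is the restriction $r_M$, and on $\Omega^L_N$ it is $r_N$. The required equation $L_\varphi r_M=r_N L_\Phi$ is just naturality of the restriction transformation $r$ applied to $\Phi:M\to N$, whose boundary restriction is $\varphi$ by hypothesis.

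The bulk of the argument lies in verifying compatibility with the left actions $\lambda$ and $\lambda'$ of $E^L_{\partial M}$ and $E^L_{\partial N}$ on $\Omega^L_M$ and $\Omega^L_N$ respectively. I would fix a collar diffeomorphism $\Psi$ from $(\partial M\times[0,1])\cup_{\partial M}M$ to $M$ implementing $\lambda$, and then use it to construct a collar diffeomorphism $\Psi'$ from $(\partial N\times[0,1])\cup_{\partial N}N$ to $N$ by setting $\Psi':=\Phi\circ\Psi\circ\bigl((\varphi\times id_{[0,1]})\cup_\varphi\Phi\bigr)^{-1}$. The map $\Psi'$ preserves fibers and covers the identity on the two copies of $\partial N$ by construction, and by the independence of $\lambda'$ from the specific choice of collar (established in the previous lemma, ultimately via re-parametrization invariance) it may be used to implement $\lambda'$. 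The resulting intertwining identity $\Psi'\circ\bigl((\varphi\times id_{[0,1]})\cup_\varphi\Phi\bigr)=\Phi\circ\Psi$, combined with naturality of the gluing transformation $\bullet$ on the morphism of gluing triples induced by $(\varphi\times id_{[0,1]})\cup_\varphi\Phi$ and with functoriality of $L^n$, then yields the desired commutative square through a two-level diagram chase entirely analogous to the one appearing in the second half of the previous lemma (the one establishing compatibility of the cylinder isomorphism with the fiber product operation).

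Finally, $L_\Phi$ is a homeomorphism since $\Phi$ is a diffeomorphism, and $E^L_\varphi$ is an isomorphism by the previous lemma, so the pair is in fact an equivariant isomorphism. The main obstacle is producing collar diffeomorphisms on $M$ and $N$ that are intertwined by $\Phi$; this is handled cleanly by the re-parametrization invariance axiom, which collapses any residual ambiguity in the choice of collar at the level of the functor $L^n$ and reduces the compatibility of $(L_\varphi,L_\Phi)$ with the actions to a straightforward application of naturality of $\bullet$.
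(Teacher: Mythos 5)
Your proposal is correct and follows essentially the same route as the paper: source compatibility from naturality of $r$, and compatibility with the left actions from collar diffeomorphisms, re-parametrization invariance, and naturality of the gluing transformation $\bullet$. The only (immaterial) difference is where re-parametrization invariance enters: you transport the collar $\Psi$ through $\Phi$ to obtain $\Psi'=\Phi\Psi\bigl((\varphi\times id_{[0,1]})\cup_\varphi\Phi\bigr)^{-1}$ so that the intertwining identity holds on the nose and then invoke independence of the choice of collar, whereas the paper fixes two arbitrary collars $\Psi,\Psi'$ and uses isotopy relative to boundary to conclude $L_{\Phi\Psi}=L_{\Psi'(\Phi\cup_\varphi(\varphi\times id_{[0,1]}))}$ directly.
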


\begin{proof}
Let $n$ be a positive integer. Let $L$ be an equivariant local field theory of dimension $n$. Let $M$ and $N$ be oriented manifolds of dimension $n$. Let $\Phi$ be a diffeomorphism from $M$ to $N$. Denote by $\varphi$ the restriction of $\Phi$ to the boundary $\partial M$ of $M$. We wish to prove in that case that the pair $(L_\varphi,L_\Phi)$ is an equivariant isomorphism of fibered left modules from $\Omega^L_M$ to $\Omega^L_N$.

Compatibility of $L_\Phi$ and $L_\varphi$ with source functions of $\Omega^L_M$ and $\Omega^L_N$ follows directly from naturality of restriction transformation $r$ of $L$. We prove that $L_\Phi$ and $L_\varphi$ are compatible with left actions of $E^L_{\partial M}$ and $E^L_{\partial N}$ on $\Omega^L_M$ and $\Omega^L_N$ respectively. Let $\lambda$ denote the left action of $E^L_{\partial M}$ on $\Omega^L_M$ and let $\lambda'$ denote the left action of $E^L_{\partial N}$ on $\Omega^L_N$. Let $\Psi$ be a collar diffeomorphism from $(\partial M\times [0,1])\cup_{\partial M}M$ to $M$ relative to a collar neighborhood of $\partial M$ in $(\partial M\times [0,1])\cup_{\partial M}M$ and let $\Psi'$ be a collar diffeomorphism from $(\partial N\times [0,1])\cup_{\partial N}N$ to $N$ relative to a collar neighborhood of $\partial N$ in $(\partial N\times [0,1])\cup_{\partial N}N$. In this case compositions $\Phi\Psi$ and $\Psi'(\Phi\cup_\varphi(\varphi\times id_{[0,1]}))$ both preserve fibers and dominate a common diffeomorphism $\varphi$ and are thus isotopically equivalent relative to boundary. The following diagram is thus commutative

\begin{center}

\begin{tikzpicture}
  \matrix (m) [matrix of math nodes,row sep=4em,column sep=7em,minimum width=2em]
  {
     L_{(\partial M\times [0,1])\cup_{\partial M}}&L_{(\partial N\times [0,1])\cup_{\partial N}N}\ \\
     L_M& L_N \\};
  \path[-stealth]
    (m-1-1) edge node [above] {$L_{\Phi\cup_\varphi(\varphi\times id_{[0,1]})}$} (m-1-2)
            edge node [left] {$L_\Psi$} (m-2-1)
    (m-2-1) edge node [below] {$L_\Phi$} (m-2-2)
		(m-1-2) edge node [right] {$L_{\Psi'}$} (m-2-2);
\end{tikzpicture}
\end{center}

\noindent Compatibility of $L_\Phi$ and $L_\varphi$ with $\lambda$ and $\lambda'$ follows directly from this and from naturality of the gluing transformation $\bullet$ of $L$. This concludes the proof. 
\end{proof}

\begin{cor}
Let $n$ be a positive integer. Let $L$ be an equivariant local field theory of dimension $n$. Let $\Sigma,\Sigma',\Lambda$ and $\Lambda'$ be oriented manifolds of dimension $n-1$. Let $M$ be an oriented cobordism from $\Sigma$ to $\Lambda$ and let $M'$ be an oriented cobordism from $\Sigma'$ to $\Lambda'$. Let $\Phi$ be an oriented diffeomorphism from $M$ to $M'$ restricting to a diffeomorphism from $\Sigma$ to $\Sigma'$ and to a diffeomorphism from $\Lambda$ to $\Lambda'$. Write $\varphi$ and $\psi$ for these restrictions. In that case the triple $(E^L_\varphi,L_\Phi,E^L_\psi)$ is an equivariant isomorphism from $\Omega^L_M$ to $\Omega^L_{M'}$.
\end{cor}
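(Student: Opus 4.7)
The plan is to derive the corollary almost immediately from the preceding lemma together with the symmetric monoidality and involutivity of the object functor $E^L_0$ established earlier in this section. Concretely, applying the previous lemma to the diffeomorphism $\Phi:M\to M'$ already yields that the pair $(L_{\partial\Phi}, L_\Phi)$ is an equivariant isomorphism of left fibered modules from $\Omega^L_M$, regarded as a left module over $E^L_{\partial M}$, to $\Omega^L_{M'}$, regarded as a left module over $E^L_{\partial M'}$, where $\partial\Phi=\varphi\sqcup\psi$ is the restriction of $\Phi$ to the boundary $\partial M=\Sigma\sqcup\Lambda$.

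Next, I would translate this left-module statement into a bimodule statement by invoking the decomposition discussion appearing immediately after the previous lemma. Symmetric monoidality of $E^L_0$ identifies $E^L_{\partial M}$ with $E^L_\Sigma\times E^L_\Lambda$ and $E^L_{\partial M'}$ with $E^L_{\Sigma'}\times E^L_{\Lambda'}$, and under these identifications $L_{\partial\Phi}$ is exactly the product morphism $E^L_\varphi\times E^L_\psi$. Compatibility of $L_\Phi$ with the full left $E^L_{\partial M}$-action, restricted to each factor, thus gives compatibility with the left $E^L_\Sigma$-action and with the left $E^L_\Lambda$-action separately. Involutivity of $E^L_0$, proven earlier, then turns the second piece into compatibility with the right $E^{L,op}_\Lambda = E^L_{-\Lambda}$-action, which is the right action defining the bimodule structure on $\Omega^L_M$.

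It then only remains to assemble: the triple $(E^L_\varphi, L_\Phi, E^L_\psi)$ satisfies compatibility with source and target functions (source compatibility is already contained in the previous lemma and target compatibility follows from the analogous argument applied to the $E^L_\Lambda$-factor, using naturality of $r$), compatibility with the left action of $E^L_\Sigma$, and compatibility with the right action of $E^L_\Lambda$. Each of these conditions was obtained in the previous step.

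The main obstacle I expect is purely bookkeeping: one must verify that the identification of the left $E^L_\Sigma\times E^L_\Lambda$-action with the bimodule structure is carried along functorially by $L_\Phi$, which requires checking that the collar diffeomorphism chosen to implement the combined left action can be taken to be (isotopic relative to boundary to) the composite of disjoint collar diffeomorphisms on the $\Sigma$- and $\Lambda$-sides, so that re-parametrization invariance can be applied. Once this coherence of collars is in place, everything else reduces to applying naturality of the gluing transformation $\bullet$ and of $r$, exactly in the style of the proof of the previous lemma.
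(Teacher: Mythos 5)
Your proposal is correct and matches the paper's intent: the paper states this corollary without proof, treating it as an immediate consequence of the preceding lemma together with the discussion that constructs the bimodule structure on $\Omega^L_M$ from the left $E^L_\Sigma\times E^{L,op}_\Lambda$-module structure (via monoidality and involutivity of $E^L_0$), which is exactly the decomposition you spell out. Your final paragraph on choosing the combined collar diffeomorphism isotopic to the composite of disjoint collars on the $\Sigma$- and $\Lambda$-sides is precisely the coherence point the paper itself flags in that discussion, so nothing in your route diverges from the paper's.
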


\noindent In the situation of the lemma above we write $\Omega^L_\Phi$ for the isomorphism of fibered bimodules $(E^L_\varphi,L_\Phi, E^L_\psi)$. We call isomorphism $E^L_\Phi$ the cylinder equivariant isomorphism associated to diffeomorphism $\Phi$. We denote by $E^L_1$ the symmetric monoidal functor from the category $\tilde{\mbox{\textbf{Cob}}}(n)_1$ of cornered oriented cobordisms of dimension $n$ to the category $\tilde{\mbox{\textbf{fsGrp}}}_1$ of rigid fibered bimodules over rigid fibered topological semi-groups associating cylinder fibered bimodule $\Omega^L_M$ to every oriented cobordism $M$ of dimension $n$ and associating cylinder equivariant isomorphism $\Omega^L_\Phi$ to every equivariant diffeomorphism $\Phi$ of cornered cobordisms. We write $E^L_1$ for the functor of cylinder bimodules associated to equivariant local field theory $L$. Thus defined $E^L_1$ is easily seen to be symmetric monoidal. Functors of cylinder fibered bimodules will serve as morphism functors for cornered topological quantum field theories associated to equivariant local field theories.

\

\noindent \textsl{Main theorem}

\

\noindent We now present a proof of the main result of this section.

\begin{thm}
Let $n$ be a positive integer. Let $L$ be an equivariant local field theory of dimension $n$. In that case the pair $E^L_0,E^L_1$ formed by the cylinder fibered topological semi-group functor and by the cylinder fibered bimodule functor  forms a symmetric monoidal double functor from double category $\tilde{\mbox{\textbf{Cob}}}(n)$ of oriented cornered cbordisms of dimension $n$ to double category $\tilde{\mbox{\textbf{fsGrp}}}$ of rigid fibered topological semi-groups.
\end{thm}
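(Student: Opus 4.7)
The plan is to verify, in turn, each piece of structure required of a symmetric monoidal double functor: compatibility with source and target functors, with horizontal identity functors, with horizontal composition, and with the symmetric monoidal structures on both double categories. The two component functors $E^L_0$ and $E^L_1$ have already been constructed and shown to be symmetric monoidal by the lemmas preceding this theorem, so what remains is essentially the coherence data connecting them.

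First I would verify compatibility of the pair $(E^L_0,E^L_1)$ with source and target functors. Given an oriented cobordism $M$ from $\Sigma$ to $\Sigma'$, by construction the left-right fibered bimodule $\Omega^L_M$ has base spaces $L_\Sigma$ and $L_{\Sigma'}$ equal to the base spaces of $E^L_\Sigma$ and $E^L_{\Sigma'}$ respectively, and source and target functions defined as compositions of $r_M$ with the appropriate projections; these are precisely the source and target assignments prescribed by $E^L_0$. Analogous compatibility at the level of equivariant diffeomorphisms follows from the construction of the cylinder equivariant isomorphism $\Omega^L_\Phi$ as the triple $(E^L_\varphi,L_\Phi,E^L_\psi)$.

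Next I would verify compatibility of the pair with the horizontal identity functors. Given an oriented manifold $\Sigma$ of dimension $n-1$, the horizontal identity of $\Sigma$ in $\tilde{\textbf{Cob}}(n)$ is the cylinder $\Sigma\times[0,1]$ regarded as a cobordism from $\Sigma$ to $\Sigma$. On the one hand, the cylinder fibered bimodule $\Omega^L_{\Sigma\times[0,1]}$ has underlying space $L_{\Sigma\times[0,1]}$, which is the underlying space of $E^L_\Sigma$, with left and right actions induced by the gluing transformation associated to two concatenated cylinders; on the other hand $i_{E^L_\Sigma}$ has the same underlying space with left and right actions both given by the fiber product operation $\mu$ on $E^L_\Sigma$. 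Both presentations arise from collar diffeomorphisms of the double cylinder to the cylinder, so the re-parametrization invariance axiom, applied exactly as in the proof that $\mu$ is well defined, identifies them. Compatibility with horizontal identity on morphisms follows because the component isomorphisms are all built from the same $L_\Phi$ and $L_\varphi$.

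The main obstacle, and the heart of the proof, will be compatibility with horizontal composition. Let $M$ be a cobordism from $\Sigma$ to $\Lambda$ and $M'$ a cobordism from $\Lambda$ to $\Lambda'$, and let $M\cup_\Lambda M'$ be their composite. I would show that $\Omega^L_{M\cup_\Lambda M'}$ is naturally isomorphic to $\Omega^L_M \circledast_{E^L_\Lambda}\Omega^L_{M'}$. The candidate isomorphism is the inverse of the gluing transformation $\bullet_\Lambda : L_{M\cup_\Lambda M'}\to L_M\times_{L_\Lambda} L_{M'}$, which is an embedding with image the equalizer of the two projections of $r$ onto $L_\Lambda$; this is precisely the defining fiber product for the horizontal composition $\circledast_{E^L_\Lambda}$. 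Compatibility with source, target, and the two residual actions of $E^L_\Sigma$ and $E^L_{\Lambda'}$ then follows by the same kind of collar-diffeomorphism argument used in the previous lemmas: a collar diffeomorphism of $(\partial(M\cup_\Lambda M')\times[0,1])\cup_{\partial(M\cup_\Lambda M')}(M\cup_\Lambda M')$ can be chosen to factor, up to isotopy relative to boundary, through independent collars on $M$ and $M'$, and re-parametrization invariance together with naturality and associativity of $\bullet$ then identify the two actions. Strict associativity of $\circledast$ combined with associativity of the gluing transformation $\bullet$ imposed in the definition of $L$ then promotes this bimodule isomorphism to a functorial identification, giving the required compatibility with horizontal composition.

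Finally, symmetric monoidality of the double functor follows from the previous lemmas establishing symmetric monoidality of $E^L_0$ and $E^L_1$ individually, together with the fact that the constraint data for $\circledast$ and for cartesian product on $\tilde{\textbf{fsGrp}}$ come from the universal property of fibered products; monoidality of the restriction and gluing natural transformations of $L$ ensures that these constraints are respected by the cylinder construction. Combining these verifications yields the theorem.
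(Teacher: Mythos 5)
Your proposal is correct and follows essentially the same route as the paper's own proof: strict compatibility with source/target functors and with horizontal identities (established via collar diffeomorphisms and the re-parametrization invariance axiom, exactly as in the earlier lemmas), and weak compatibility with horizontal composition witnessed by the gluing transformation $\bullet_\Lambda$ as the comparison isomorphism, whose equivariance, naturality, monoidality and coherence are deduced from naturality, monoidality and associativity of $\bullet$. The only cosmetic difference is that your comparison map runs in the opposite direction (the inverse of the gluing transformation rather than the gluing transformation itself, which the paper uses for its transformation $A_{M,N}$), which is immaterial since it is a homeomorphism onto the fibered product.
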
 

\begin{proof}
Let $n$ be a positive integer. Let $L$ be an equivariant local field theory of dimension $n$. We wish to prove in this case that pair formed by functor $E^L_0$ of cylinder fibered topological semigroups and functor $E^L_1$ of cylinder fibered bimodules forms a symmetric monoidal double functor from double cateogry $\tilde{\mbox{\textbf{Cob}}}(n)$ of cornered cobordisms of dimension $n$ to double category $\tilde{\mbox{\textbf{fsGrp}}}$ of rigid fibered topological semi-groups.

Functors $E^L_0$ and $E^L_1$ are symmetric monoidal and are clearly compatible with source and target functors on $\tilde{\mbox{\textbf{Cob}}}(n)$ and $\tilde{\mbox{\textbf{fsGrp}}}$. We prove that $E^L_0$ and $E^L_1$ are compatible with horizontal identity transformations on $\tilde{\mbox{\textbf{Cob}}}(n)$ and $\tilde{\mbox{\textbf{fsGrp}}}$ that is, we wish to prove that the following diagram of functors commutes

\begin{center}

\begin{tikzpicture}
  \matrix (m) [matrix of math nodes,row sep=4em,column sep=7em,minimum width=2em]
  {
     \tilde{\mbox{\textbf{Cob}}}(n)_0&\tilde{\mbox{\textbf{fsGrp}}}_0\ \\
     \tilde{\mbox{\textbf{Cob}}}(n)_1&\tilde{\mbox{\textbf{fsGrp}}}_1 \\};
  \path[-stealth]
    (m-1-1) edge node [above] {$E_0^L$} (m-1-2)
            edge node [left] {$\times id_{[0,1]}$} (m-2-1)
    (m-2-1) edge node [below] {$E_1^L$} (m-2-2)
		(m-1-2) edge node [right] {$i$} (m-2-2);
\end{tikzpicture}
\end{center}

\noindent We first prove the commutativity of the above diagram on objects. Let $\Sigma$ be an oriented manifold of dimension $n-1$. We first prove that the horizontal identity $i_{E^L_\Sigma}$ of the cylinder fibered topological semi-group $E^L_\Sigma$ associated to $\Sigma$ and cylinder fibered bimodule $\Omega^L_{\Sigma\times [0,1]}$ associated to the cylinder $\Sigma\times [0,1]$ on $\Sigma$ are equal as fibered left-right bimodules over $E^L_\Sigma$. The underlying space of $i_{E^L_\Sigma}$ equals the total space of fibered topological semi-group $E^L_\Sigma$, which in turn equals the space of global solutions $L_{\Sigma\times [0,1]}$ on the cylinder $\Sigma\times [0,1]$. This is equal to the underlying space of cylinder fibered bimodule $\Omega^L_{\Sigma\times [0,1]}$ associated to $\Sigma\times [0,1]$. Underlying spaces of $i_{E^L_\Sigma}$ and $\Omega^L_{\Sigma\times [0,1]}$ are thus equal. Source and target functions on $i_{E^L_\Sigma}$ are both equal to projection $\pi$ on $E^L_\Sigma$. Now, projection $\pi$ on $E^L_\Sigma$ equals composition $pp_{L^2_\Sigma}r_{\Sigma\times [0,1]}$ where $p_{L_\Sigma^2}$ denotes the projection of $L_{\partial (\Sigma\times [0,1])}$ onto the factor $L^2_\Sigma$ corresponding to the top and bottom copies of $\Sigma$ on $\partial (\Sigma\times [0,1])$ and where $p$ denotes any of the two projections of $L_\Sigma^2$ onto $L_\Sigma$. This is equal to composition of restriction transformation $r_{\Sigma\times [0,1]}$ and projection of $L_{\partial(\Sigma\times [0,1])}$ onto the factor $L_\Sigma$ corresponding to any of the the top or bottom copies of $\Sigma$ on $\Sigma\times [0,1]$. This equals source and target functions on $\Omega^L_{\Sigma\times [0,1]}$. Source and target functions on $i_{E^L_\Sigma}$ and $\Omega^L_{\Sigma\times [0,1]}$ are thus equal. We now prove that the left actions of $E^L_\Sigma$ on $i_{E^L_\Sigma}$ and $\Omega^L_{\Sigma\times [0,1]}$ are equal. Left action $\lambda$ on $i_{E^L_\Sigma}$ is defined by diagram

\begin{center}

\begin{tikzpicture}
  \matrix (m) [matrix of math nodes,row sep=4em,column sep=7em,minimum width=2em]
  {
     L_{\Sigma\times [0,1]}\times_{L_\Sigma} L_{\Sigma\times [0,1]}&L_{\Sigma\times [0,1]} \\
     L_{(\Sigma\times [0,1])\cup_\Sigma(\Sigma\times [0,1])}& \\};
  \path[-stealth]
    (m-1-1) edge node [above] {$\lambda$} (m-1-2)
            edge node [left] {$\bullet^{-1}_{\partial M}$} (m-2-1)
    (m-2-1) edge node [right] {$L_\Phi$} (m-1-2);
\end{tikzpicture}
\end{center}

\noindent for any diffeomorphism $\Phi$ from $(\Sigma\times [0,1])\cup_\Sigma(\Sigma\times [0,1])$ to $\Sigma\times [0,1]$ preserving fibers and covering the identity diffeomorphism on the top and bottom copies of $\Sigma$ on both cylinders. Choosing $\Phi$ to be a collar diffeomorphism from $(\Sigma\times [0,1])\cup_\Sigma(\Sigma\times [0,1])$ to $\Sigma\times [0,1]$ we conclude that left action of $E^L_\Sigma$ on $i_{E^L_\Sigma}$ equals left action of $E^L_\Sigma$ on $\Omega^L_\Sigma$. Analogously the right action of $E^L_\Sigma$ on $i_{E^L_\Sigma}$ equals the right action of $E^L_\Sigma$ on $\Omega^L_{\Sigma\times [0,1]}$. We conclude that left-right fibered bimodules $i_{E^L_\Sigma}$ and $\Omega^L_{\Sigma\times [0,1]}$ over $E^L_\Sigma$ are equal. The compatibility diagram for horizontal identities above thus commutes on objects. We now prove it commutes on morphisms. Let $\Sigma$ and $\Sigma'$ oriented manifolds of dimension $n-1$. Let $\varphi$ be an oriented diffeomorphism from $\Sigma$ to $\Sigma'$. We wish to prove that equivariant diffeomorphisms of fibered bimodules $i_{E^L_\varphi}$ and $\Omega^L_{\varphi\times id_{[0,1]}}$ are equal. The fact that the product of the restriction of $\varphi\times id_{[0,1]}$ to the bottom or the top copies of $\Sigma$ on the boundary $\partial(\Sigma\times [0,1])$ of $\Sigma\times [0,1]$ and the identity $id_{[0,1]}$ of the interval $[0,1]$ again equals $\varphi\times id_{[0,1]}$ implies that equivariant isomorphism $\Omega^L_{\varphi\times id_{[0,1]}}$ equals triple $(L_{\varphi\times id_{[0,1]}},L_{\varphi\times id_{[0,1]}},L_{\varphi\times id_{[0,1]}})$. This in turn is equal to $i_{E^L_\varphi}$. Equivariant isomorphisms $i_{E^L_\varphi}$ and $\Omega^L_{\varphi\times id_{[0,1]}}$ are thus equal. We conclude that the compatibility square for horizontal identities commutes on morphisms, that is, we conclude that functors $E^L_0$ and $E^L_1$ are compatible with horizontal identity functors of double categories $\tilde{\mbox{\textbf{Cob}}}(n)$ and $\tilde{\mbox{\textbf{fsGrp}}}$. 

We now prove weak compatibility of functors $E^L_0$ and $E^L_1$ with horizontal composition operations in double categories $\tilde{\mbox{\textbf{Cob}}}(n)$ and $\tilde{\mbox{\textbf{fsGrp}}}$, that is we now provide a coherent monoidal natural isomorphism $A$ from the top-right corner to the bottom-left corner of the following diagram

\begin{center}

\begin{tikzpicture}
  \matrix (m) [matrix of math nodes,row sep=4em,column sep=5em,minimum width=2em]
  {
     \tilde{\mbox{\textbf{Cob}}}(n)_1\times_{\tilde{\mbox{\textbf{Cob}}}(n)_0}\tilde{\mbox{\textbf{Cob}}}(n)_1 &\tilde{\mbox{\textbf{fsGrp}}}_1\times_{\tilde{\mbox{\textbf{fsGrp}}}_0}\tilde{\mbox{\textbf{fsGrp}}}_1\ \\
     \tilde{\mbox{\textbf{Cob}}}(n)_1&\tilde{\mbox{\textbf{fsGrp}}}_1 \\};
  \path[-stealth]
    (m-1-1) edge node [above] {$E_1^L\times_{E^L_0} E^L_1$} (m-1-2)
            edge node [left] {$\ast$} (m-2-1)
    (m-2-1) edge node [below] {$E_1^L$} (m-2-2)
		(m-1-2) edge node [right] {$\circledast$} (m-2-2);
\end{tikzpicture}
\end{center}

\noindent Let $\Sigma,\Sigma'$ and $\Sigma''$ be oriented manifolds of dimension $n-1$. Let $M$ be an oriented cobordism from $\Sigma$ to $\Sigma'$ and let $N$ be an oriented cobordism from $\Sigma'$ to $\Sigma''$. The space underlying left-right fibered bimodule $\Omega^L_{M\cup_{\Sigma'}N}$ over $E^L_\Sigma$ and $E^L_{\Sigma''}$ associated to glued cobordism $M\cup_{\Sigma'}N$ from $\Sigma$ to $\Sigma''$ is equal to the space of global solutions $L_{\Sigma\times [0,1]}$ of the cylinder $\Sigma\times [0,1]$ on $\Sigma$. The space underlying horizontal composition $\Omega^L_M\circledast_{E^L_{\Sigma'}}\Omega^L_N$ of cylinder fibered bimodule $\Omega^L_M$ associated to $M$ and cylinder fibered bimodule $\Omega^L_N$ associated to $N$ is equal to product $L_M\times_{L_{\Sigma'}}L_N$ of spaces of global solutions $L_M$ and $L_N$ associated to $M$ and $N$ fibered by space of germs of local solutions $L_\Sigma$ associated to $\Sigma$. Gluing transformation $\bullet_{\Sigma'}$ associated to the gluing pair formed by the disjoint union of $M$ and $N$ and common boundary piece $\Sigma'$ defines a homeomorphism from the space underlying $\Omega^L_{M\cup_\Sigma N}$ to the space underlying $\Omega^L_M\circledast_{E^L_{\Sigma'}}\Omega^L_N$. We write $A_{M,N}$ for triple $(id_{E^L_\Sigma},\bullet_{\Sigma'},id_{E^L_{\Sigma''}})$. We prove that thus defined $A_{M,N}$ is an equivariant morphism of fibered bimodules. Compatibility of $A_{M,M'}$ with source and target functions of $\Omega^L_{M\cup_\Sigma N}$ and $\Omega^L_M\circledast_{E^L_{\Sigma'}}\Omega^L_N$ respectively follows directly from symmetric monoidality of functor of spaces of global slutions of $L$, monoidality of restriction transformation $r$ of $L$, and from compatibility of $r$ and gluing transformation $\bullet$ of $L$. We prove compatibility of $A_{M,N}$ with left actions of $E^L_\Sigma$ on $\Omega^L_{M\cup_{\Sigma'}}\Omega^L_N$ and $\Omega^L_M\circledast_{E^L_{\Sigma'}}\Omega^L_N$ respectively. Consider the following diagram

\begin{center}

\begin{tikzpicture}
  \matrix (m) [matrix of math nodes,row sep=3em,column sep=8em,minimum width=2em]
  {
     L_{\Sigma\times [0,1]}\times_{L_\Sigma}L_{M\cup_{\Sigma'}N}&L_{\Sigma\times [0,1]}\times_{L_{\Sigma}}L_M\times_{L_{\Sigma'}}L_N \\
     L_{(\Sigma\times [0,1])\cup_\Sigma M\cup_{\Sigma'}N}&L_{(\Sigma\times [0,1])\cup_{\Sigma'}M}\times_{L_{\Sigma'}}L_N \\
		L_{M\cup_{\Sigma'}N}&L_M\times_{L_{\Sigma'}}L_N\\};
  \path[-stealth]
    (m-1-1) edge node [above] {$L_{id_{\Sigma\times [0,1]}}\times \bullet_{\Sigma'}$} (m-1-2)
            edge node [left] {$\bullet_\Sigma^{-1}$} (m-2-1)
    (m-1-2) edge node [right] {$\bullet_{\Sigma}\times id_N$} (m-2-2)
		(m-2-1) edge node [above] {$\bullet_{\Sigma'}$} (m-2-2)
		(m-2-1) edge node [left] {$L_{\Phi\cup id_N}$} (m-3-1)
		(m-2-2) edge node [right] {$L_{\Phi}\times id_N$} (m-3-2)
		(m-3-1) edge node [below] {$\bullet_{\Sigma'}$} (m-3-2);
\end{tikzpicture}
\end{center}

\noindent where $\Phi$ is any collar diffeomorphism from $(\Sigma\times [0,1])\cup_\Sigma M$ to $M$ relative to a collar neighborhood of $\Sigma$ on $(\Sigma\times [0,1])\cup_\Sigma M$ disjoint from $\Sigma'$ in $M$. Commutativity of the upper square in the above diagram follows directly from associativity of gluing transformation $\bullet$ of $L$. Commutativity of the lower square follows from naturality of $\bullet$. The diagram above is thus commutative. Compatibility of $A_{M,N}$ with left action of $E^L_\Sigma$ on $\Omega_{M\cup_{\Sigma'}N}$ and left action of $E^L_\Sigma$ on $\Omega^L_M\circledast_{E^L_{\Sigma'}}\Omega^L_N$ follows from this. A similar argument proves compatibility of $A_{M,N}$ with right actions of $E^L_{\Sigma''}$ on $\Omega^L_{M\cup_{\Sigma'}}N$ and $\Omega^L_M\circledast_{E^L_{\Sigma'}}\Omega^L_N$. We conclude that $A_{M,N}$ is an equivariant isomorphism of fibered bimodules from $\Omega^L_{M\cup_{\Sigma'}N}$ to $\Omega^L_M\circledast_{E^L_{\Sigma'}}\Omega^L_N$. We write $A$ for the collection of all equivariant isomorphisms of the form $A_{M,N}$ with $M,N$ running through all horizontally compatible pairs of oriented cobordisms of dimension $n$. Thus defined $A$ is a natural transformation from the upper-right corner of the compatibility diagram for horizontal composition to its lower-left corner. Naturality and monoidality of $A$ follows directly from naturality and monoidality of gluing transformation $\bullet$ of $L$. It remains to prove coherence of $A$ but the hexagon axiom for $A$ follows directly from associativity of $\bullet$. This concludes the proof-

\end{proof}

\noindent We will write $E^L$ for the pair formed by symmetric monoidal functors $E^L_0$ and $E^L_1$. Theorem 4.7 says that thus defined $E^L$ is a cornered topological quantum field theory of the same dimension as equivariant local field theory $L$. We call $E^L$ the cylinder cornered topological quantum field theory associated to $L$. We have thus associated, through the cylinder cornered topological quantum field theory construction, a cornered topological quantum field theory to every equivariant local field theory.

\section{Smoothing the cylinder construction}

\

\noindent In this section we present a smooth version of cylinder topological quantum field theories associated to equivariant local field theories. We introduce the notions of fibered Lie semi-group and of smooth fibered bimodule. We prove that there is no natural notion of a double category of fibered Lie semi-groups ans smooth fibered bimodules. We associate nevertheless to every equivariant local field theory a topological quantum field theory of fibered Lie semi-groups and smooth fibered bimodules, concrete over the cylinder topological quantum field theory associated to $L$.We begin with the definition of fibered Lie semi-group.

\

\noindent \textsl{Smoothing cylinder fibered semi-groups}

\

\noindent Let $E$ be a fibered topological semi-group with base space $X$, projection $\pi$ and fiber product operation $\mu$. We will understand for a structure of fibered Lie semi-group on $E$ a structure of smooth manifold on the total space of $E$ and a structure of smooth manifold on the base space $X$ of $E$ such that with these structures projection $\pi$ of $E$ is a smooth function, topological fiber product $E\times_\pi E$ of $E$ with itself relative to $\pi$ admits the structure of a smooth manifold in such a way that with this structure $E\times _\pi E$ is the fiber product, in the category of smooth manifolds and smooth functions, of $E$ with itself relative to $\pi$, and finally such that with this structure fiber product operation $\mu$ on $E$ is a smooth function from $E\times_\pi E$ to $E$. We will say that a fibered topological semi-group together with a structure of fibered Lie semi-group is a fibered Lie semi-group. In analogy with the topological case we will say that a fibered Lie semi-group $E$ with smooth projection $\pi$ and smooth fiber product operation $\mu$ is rigid if $\mu$ is a diffeomorphism from $E\times_\pi E$ to $E$. 

We will say that a morphism of topological fibered semi-groups $(\varphi,\Phi)$ from the underlying topological fibered semi-group of a fibered Lie semi-group $E$ to the underlying topological fibered semi-group of another fibered Lie semi-group $E'$ is a morphism of fibered Lie semi-groups from $E$ to $E'$ if both base space function $\varphi$ and total space function $\Phi$ of $(\varphi,\Phi)$ are smooth. We will write \textbf{fsGrp}$^\ell_0$ for the category of fibered Lie semi-groups and their morphisms. Cartesian product provides \textbf{fsGrp}$^\ell_0$ with the structure of a symmetric monoidal category. We will write $\tilde{\mbox{\textbf{fsGrp}}}^\ell_0$ for the full symmetric monoidal subcategory of \textbf{fsGrp}$^\ell_0$ generated by rigid fibered Lie semi-groups. We will denote by $U_0$ the symmetric monoidal functor from category \textbf{fsGrp}$^\ell_0$ to category \textbf{fsGrp}$_0$ of topological fibered semi-groups associating to every fibered Lie semi-group $E$ its underlying topological fibered semi-group and associating to every morphism of fibered Lie semi-groups $(\varphi,\Phi)$ morphism $(\varphi,\Phi)$ itself considered as a morphism between topological fibered semi-groups. Thus defined $U_0$ provides category \textbf{fsGrp}$^\ell_0$ with the structure of a symmetric monoidal category concrete over \textbf{fsGrp}$_0$. We will keep writing $U_0$ for the restriction of $U_0$ to category $\tilde{\mbox{\textbf{fsGrp}}}^\ell_0$. Thus defined $U_0$ now defines the structure of a symmetric monoidal category on $\tilde{\mbox{\textbf{fsGrp}}}^\ell_0$ concrete over category $\tilde{\mbox{\textbf{fsGrp}}}_0$ of rigid topological fibered semi-groups.

\

\noindent Let $n$ be a positive integer. Let $L$ be an equivariant local field theory of dimension $n$. Given an oriented manifold $\Sigma$ of dimension $n-1$ field theory $L$ associates to both the base space $L_\Sigma$ and the total space $L_{\Sigma\times [0,1]}$ of the cylinder topological fibered semi-group $E^L_\Sigma$ associated to $\Sigma$ structures of smooth manifolds. With this structure projection $\pi$ of $E^L_\Sigma$ is a smooth function, by the gluing axiom in the definition of $L$ the topological fiber product $E^L_\Sigma\times_\pi E^L_\Sigma$ admits the structure of a smooth manifold making $E^L_\Sigma\times_\pi E^L_\Sigma$ the fiber profuct of $E^L_\Sigma$ with itself with respect to $\pi$ in the category of smooth manifolds and smooth functions, and making the gluing transformation and thus the fiber product operation on $E^L_\Sigma$ smooth. Equivariant local field theory $L$ thus associates to every oriented manifold $\Sigma$ of dimension $n-1$ the structure of a Lie fibered semi-group on the cylinder topological semi-group $E^L_\Sigma$ associated to $\Sigma$. Observe that with this structure $E^L_\Sigma$ is a reduced fibered Lie semi-group. We call this reduced fibered Lie semi-group the cylinder fibered Lie semi-group associated to $\Sigma$.

Given an oriented diffeomorphism $\varphi$ from an oriented manifold $\Sigma$ of dimension $n-1$ to another oriented manifold $\Sigma'$ of dimension $n-1$ the image $L_\varphi$ of $\varphi$ under the functor of germs of local solutions of $L$ is a smooth function from the base manifold $L_\Sigma$ of the cylinder fibered Lie semi-group $E^L_\Sigma$ to the base manifold of the cylinder fibered Lie semi-group $E^L_{\Sigma'}$, and the image $L_{\varphi\times id_{[0,1]}}$ of diffeomorphism $\varphi\times id_{[0,1]}$ under the functor of global solutions of $L$ is a smooth function from the total manifold $L_{\Sigma\times [0,1]}$ of $E^L_\Sigma$ to the total manifold $L_{\Sigma'\times [0,1]}$ of $E^L_{\Sigma'}$. Morphism of topological fibered semi-groups $E^L_\varphi$ is thus a morphism of fibered Lie semi-groups from cylinder fibered Lie semi-group $E^L_\Sigma$ to fibered Lie semi-group $E^L_{\Sigma'}$. Write $S^L_0$ for the functor from the category of objects $\tilde{\mbox{\textbf{Cob}}}(n)_0$ of the symmetric monoidal double category of cornered cobordisms of dimension $n$ to the symmetric monoidal category $\tilde{\mbox{\textbf{fsGrp}}}^\ell_0$ of rigid fibered Lie semi-groups associating to every oriented manifold of dimension $n-1$ the cylinder fibered Lie semi-group $E^L_\Sigma$ associated to $\Sigma$ and to every oriented diffeomorphism $\varphi$ between oriented manifolds of dimension $n-1$ the morphism $E^L_\varphi$. Thus defined $S^L_0$ is easily seen to be symmetric monoidal and to satisfy equation

\begin{center}

\begin{tikzpicture}
  \matrix (m) [matrix of math nodes,row sep=3em,column sep=7em,minimum width=2em]
  {
     \tilde{\mbox{\textbf{Cob}}}(n)_0& \tilde{\mbox{\textbf{fsGrp}}}^\ell_0 \\
      & \tilde{\mbox{\textbf{fsGrp}}}_0 \\};
  \path[-stealth]
    (m-1-1) edge node [above] {$S^L_0$} (m-1-2)
    (m-1-2) edge node [right] {$U_0$} (m-2-2)
    (m-1-1) edge node [below] {$E^L_0$}(m-2-2);
\end{tikzpicture}
\end{center}

\noindent We call symmetric monoidal functor $S^L_0$ the cylinder fibered Lie semi-group functor. We regard cylinder fibered semi-group functors as smoothings of cylinder fibered topological semi-group functors presented in the previous section.

\

\noindent \textsl{Smoothing fibered bimodules}

\

\noindent In analogy with the sonstruction presented above we now present a smoothing procedure for the cylinder fibered bimodule construction. We begin with the definition of smooth fibered bimodule.

\

\noindent Let $E$ and $E'$ be fibered Lie semi-groups with projections $\pi$ and $\pi'$. Let $\Omega$ be a left-right topological fibered bimodule over topological fibered semi-groups underlying $E$ and $E'$ with source and targer functions $s$ and $t$ and left and right actions $\lambda$ and $\rho$ respectively. We will understand for a structure of a left-right smooth fibered bimodule over fibered Lie semi-groups $E$ and $E'$ on $\Omega$ a structure of smooth manifold on $\Omega$ such that with this structure source and target functions $s$ and $t$ on $\Omega$ are smooth, such that the topological fiber products $E\times_{\pi,s}\Omega$ and $\Omega\times_{t,\pi'}E'$ both admit smooth structures making them fiber products of the corresponding diagrams in the category of smooth manifolds and smooth functions, and such that, with this structure left and right actions $\lambda$ and $\rho$ are smooth functions from $E\times_{\pi,s}\Omega$ and $\Omega\times_{t,\pi'}E'$ to $\Omega$ respectively. We will say that a fibered bimodule together with a structure of smooth fibered bimodule is a smooth fibered bimodule. Assuming fibered Lie semi-groups $E$ and $E'$ are rigid we will say that left-right smooth fibered bimodule $\Omega$ over $E$ and $E'$ is rigid if both the left action $\lambda$ of $E$ on $\Omega$ and the right action $\rho$ of $E'$ on $\Omega$ are diffeomorphisms from $E\times_{\pi,s}\Omega$ to $\Omega$ and from $\Omega\times_{t,\pi'}E'$ to $\Omega$ respectively. 

We will say that an equivariant morphism $(\varphi,\Phi,\psi)$ from the fibered bimodule underlying a left-right smooth fibered bimodule $\Omega$ over fibered Lie groups $E$ and $F$ to the fibered bimodule underlying another left-right smooth fibered bimodule $\Omega'$ over fibered Lie semi-groups $E'$ and $F'$ is smooth if $\Phi$ is smooth and if $\varphi$ and $\psi$ are morphisms of fibered Lie semi-groups. We will write \textbf{fsGrp}$^\ell_1$ for the category of left-right smooth fibered bimodules over fibered Lie semi-groups and smooth equivariant morphisms. Cartesian product provides \textbf{fsGrp}$^\ell_1$ with the structure of a symmetric monoidal category. We will write $\tilde{\mbox{\textbf{fsGrp}}}^\ell_1$ for the symmetric monoidal sub-cateogry of \textbf{fsGrp}$^\ell_1$ generated by rigid smooth fibered bimodules. We will now write $U_1$ for the symmetric monoidal functor from \textbf{fsGrp}$^\ell_1$ to category \textbf{fsGrp}$_1$ of fibered topological bimodules over fibered topological semi-groups associating to every smooth fibered bimodule its underlying topological fibered bimodule and to every smooth equivariant morphism of smooth fibered bimodules $(\varphi,\Phi,\psi)$, triple $(\varphi,\Phi,\psi)$ itself now considered as a morphism of topological fibered bimodules. We will keep writing $U_1$ for the restriction of $U_1$ to category $\tilde{\mbox{\textbf{fsGrp}}}^\ell_1$. Thus defined $U_1$ now defines the structure of a symmetric monoidal category on $\tilde{\mbox{\textbf{fsGrp}}}^\ell_1$ concrete over category $\tilde{\mbox{\textbf{fsGrp}}}_1$ of rigid topological fibered bimodules.

\

\noindent Let $n$ be a positive integer. Let $L$ be an equivariant local field theory of dimension $n$. Given an oriented cobordism $M$ of dimension $n$ from an oriented manifold $\Sigma$ to another oriented manifold $\Lambda$ field theory associates to the space $L_M$ underlying cylinder fibered bimodule $\Omega^L_M$ associated to $M$, the structure of a smooth manifold.  With this structure source and target functions $s$ and $t$ on $\Omega^L_M$ are smooth. By the gluing axiom in the definition of $L$ the topological fiber products $E^L_\Sigma\times_{\pi,s}\Omega^L_M$ and $\Omega^L_M\times_{t,\pi'}E^L_\Lambda$ where $\pi$ and $\pi'$ denote the projections of cylinder fibered Lie semi-groups $E^L_\Sigma$ and $E^L_\Lambda$, admit structures of smooth manifolds making $E^L_\Sigma\times_{s,\pi}\Omega^L_M$ and $\Omega^L_M\times_{t,\pi'}E^L_\Lambda$ the fiber products of the corresponding diagrams in the category of smooth manifolds and smooth functions, and making the left action $\lambda$ of $E^L_\Sigma$ on $\Omega^L_M$ and the right action of $E^L_\Lambda$ on $\Omega^L_M$ diffeomorphisms. Equivariant local field theory $L$ thus associates to every oriented cobordism of dimension $n$ from an oriented manifold $\Sigma$ to another oriented manifold $\Lambda$ a left-right reduced smooth fibered bimodule $\Omega^L_M$ over cylinder fibered Lie semi-groups $E^L_\Sigma$ and $E^L_\Lambda$. We call cylinder fibered bimodule $\Omega^L_M$ with this structure the smooth cylinder fibered bimodule associated to $M$. We keep denoting the smooth cylinder fibered bimodule associated to an oriented cobordism $M$ by $\Omega^L_M$. 

Let $M$ and $M'$ be oriented cobordisms of dimension $n$ from an oriented manifold $\Sigma$ to another oriented manifold $\Lambda$ and from an oriented manifold $\Sigma'$ to another oriented manifold $\Lambda'$ respectively. Let $\Phi$ be an oriented equivariant diffeomorphism from $M$ to $M'$. The image $L_\Phi$ of diffeomorphism $\Phi$ under the functor of spaces of global solutions of local field theory $L$ is a smooth function from manifold $L_M$ underlying the smooth cylinder bimodule $\Omega^L_M$ associated to $M$ to the manifold $L_{M'}$ underlying the smooth cylinder bimodule $\Omega^L_{M'}$ associated to $M'$. Morphism $\Omega^L_\Phi$ of topological fibered bimodules is thus a morphism of smooth fibered bimodules from $\Omega^L_M$ to $\Omega^L_{M'}$. We write $S^L_1$ for the functor from the category of morphisms $\tilde{\mbox{\textbf{Cob}}}(n)_1$ of the symmetric monoidal double category of cornered cobordisms of dimension $n$ to the symmetric monoidal category $\tilde{\mbox{\textbf{fsGrp}}}^\ell_1$ of rigid smooth fibered bimodules over rigid fibered Lie semi-groups associating to every oriented cobordism $M$ of dimension $n$ the smooth cylinder fibered bimodule $\Omega^L_M$ associated to $M$ and to every oriented equivariant diffeomorphism $\Phi$ between oriented cobordisms of dimension $n$ the smooth morphism $E^L_\Phi$. Thus defined $S^L_1$ is easily seen to be symmetric monoidal and to make the following diagram commute.

\begin{center}

\begin{tikzpicture}
  \matrix (m) [matrix of math nodes,row sep=3em,column sep=7em,minimum width=2em]
  {
     \tilde{\mbox{\textbf{Cob}}}(n)_1& \tilde{\mbox{\textbf{fsGrp}}}^\ell_1 \\
      & \tilde{\mbox{\textbf{fsGrp}}}_1 \\};
  \path[-stealth]
    (m-1-1) edge node [above] {$S^L_1$} (m-1-2)
    (m-1-2) edge node [right] {$U_1$} (m-2-2)
    (m-1-1) edge node [below] {$E^L_1$}(m-2-2);
\end{tikzpicture}
\end{center}

\noindent We call $S^L_1$ the cylinder smooth fibered bimodule functor. We regard functors of cylinder smooth fibered bimodules as smoothings of cylinder topological fibered bimodules presented in the previous section.

\

\noindent \textsl{Smoothing cylinder topological quantum field theories}

\

\noindent We wish for pairs formed by functors of cylinder Lie semi-groups and cylinder smooth fibered bimodules associated to equivariant local field theories to form symmetric monoidal double functors extending cylinder topological quantum field theories. In order to do this we would first need to provide the pair formed by categories $\tilde{\mbox{\textbf{fsGrp}}}^\ell_0$ and $\tilde{\mbox{\textbf{fsGrp}}}^\ell_1$ with the structure of a symmetric monoidal double category thus defining a codomain double category for smooth cylinder topological quantum field theories. There is an obvious choice of concrete source and target functors on pair $\tilde{\mbox{\textbf{fsGrp}}}^\ell_0,\tilde{\mbox{\textbf{fsGrp}}}^\ell_1$ over double category $\tilde{\mbox{\textbf{fsGrp}}}$ of rigid fibered topological semi-groups. Now, given a fibered Lie semi-group $E$ the space underlying horizontal identity $i_E$ associated to $E$ inherits, from the fibered Lie semi-group structure of $E$, the structure of a smooth manifold. With this structure $i_E$ is a smooth fibered bimodule over $E$. We call $i_E$ the smooth horizontal identity fibered bimodule associated to $E$. The smooth horizontal identity fibered bimodule construction admits an extension to a symmetric monoidal functor $i$ from category $\tilde{\mbox{\textbf{fsGrp}}}^\ell_0$ to category $\tilde{\mbox{\textbf{fsGrp}}}^\ell_1$ making the following diagram commute.

\begin{center}

\begin{tikzpicture}
  \matrix (m) [matrix of math nodes,row sep=3em,column sep=7em,minimum width=2em]
  {
     \tilde{\mbox{\textbf{fsGrp}}}^\ell_0& \tilde{\mbox{\textbf{fsGrp}}}^\ell_1 \\
     \tilde{\mbox{\textbf{fsGrp}}}_0 & \tilde{\mbox{\textbf{fsGrp}}}_1 \\};
  \path[-stealth]
    (m-1-1) edge node [left] {$U_0$} (m-2-1)
            edge node [above] {$i$} (m-1-2)
    (m-1-2) edge node [right] {$U_1$} (m-2-2)
    (m-2-1) edge node [below] {$i$}(m-2-2);
\end{tikzpicture}
\end{center}

\noindent where $i$ in the bottom row of the diagram denotes the horizontal identity functor on $\tilde{\mbox{\textbf{fsGrp}}}^\ell_0$. We would now wish to define a symmetric monoidal bifunctor $\circledast$ from $\tilde{\mbox{\textbf{fsGrp}}}^\ell_0\times_{\tilde{\mbox{\textbf{fsGrp}}}^\ell_0}\tilde{\mbox{\textbf{fsGrp}}}^\ell_1$ to $\tilde{\mbox{\textbf{fsGrp}}}^\ell_1$ making the following square commute

\begin{center}

\begin{tikzpicture}
  \matrix (m) [matrix of math nodes,row sep=3em,column sep=3em,minimum width=2em]
  {
     \tilde{\mbox{\textbf{fsGrp}}}^\ell_1\times_{\tilde{\mbox{\textbf{fsGrp}}}^\ell_0}\tilde{\mbox{\textbf{fsGrp}}}^\ell_1& \tilde{\mbox{\textbf{fsGrp}}}^\ell_1 \\
     \tilde{\mbox{\textbf{fsGrp}}}_1\times_{\tilde{\mbox{\textbf{fsGrp}}}_0}\tilde{\mbox{\textbf{fsGrp}}}_1 & \tilde{\mbox{\textbf{fsGrp}}}_1 \\};
  \path[-stealth]
    (m-1-1) edge node [left] {$U_1\times_{U_0} U_1$} (m-2-1)
            edge node [above] {$\circledast$} (m-1-2)
    (m-1-2) edge node [right] {$U_1$} (m-2-2)
    (m-2-1) edge node [below] {$\circledast$}(m-2-2);
\end{tikzpicture}
\end{center}

\noindent where $\circledast$ in the bottom row of the diagram denotes the horizontal composition bifunctor on $\tilde{\mbox{\textbf{fsGrp}}}$. The following example proves that this bifunctor does not exist.

\

\noindent \textsl{Example:} Write $E$ for the following fibered Lie semi-group: We make both total and base spaces of $E$ to be equal to the real line $\mathbb{R}$. We make projection $\pi$ of $E$ to be equal to the identity diffeomorphism $id_{R}$ of $\mathbb{R}$. In this case the topological fiber product $E\times_\pi E$ of $E$ with itself relative to projection $\pi$ is equal to the diagonal $\Delta_\mathbb{R}$ on $\mathbb{R}^2$, which, considered as a submanifold of $\mathbb{R}^2$ is the fiber product of $E$ with itself relative to $\pi$ in the category of smooth manifolds and smooth functions. We make the fiber product operation on $E$ to be any of the left or right projections of $E\times_\pi E$ onto $E$. Write now $\Omega$ for the following smooth left-right fibered bimodule over $E$: We make the manifold underlying $\Omega$ to again be the real line $\mathbb{R}$. We make source and target functions $s,t$ on $\Omega$ to be the identity diffeomorphism $id_{\mathbb{R}}$ on $\mathbb{R}$ and function $x^2$ respectively. Topological fiber product $E\times_{\pi,s}\Omega$ is again equal to the diagonal $\Delta_\mathbb{R}$ on $\mathbb{R}$ and thus admits the structure of fiber product of the corresponding manifolds in the category of smooth manifolds and smooth functions. The topological fiber product $\Omega\times_{t,\pi}E$ is equal to the graph of function $x^2$. Considered as a submanifold of $\mathbb{R}^2$ $\Omega\times_{t,\pi}E$ is again equal to the fiber product relative to the corresponding diagram in the category of smooth manifolds and smooth functions. We make left action $\lambda$ of $E$ on $\Omega$ to be the fiber product operation on $E$. We make the right action $\rho$ of $E$ on $\Omega$ to be the projection on the variable axis of $\Omega\times_{t,\pi}E$ onto $\mathbb{R}$. Thus defined actions $\lambda$ and $\rho$ are easily seen to be compatible with source and target functions $s$ and $t$, associative, and to commute with each other. This and the fact that both $\lambda$ and $\rho$ are diffeomorphisms proves that with the structure described above $\Omega$ is a left-right rigid smooth fibered bimodule over $E$.
Let $\Omega'$ denote the left-right rigid fibered smooth bimodule opposite to $\Omega$, that is $\Omega'$ is the left-right rigid smooth fibered bimodule over $E$ defined as follows: We make the manifold underlying $\Omega'$ to be equal to the manifold underlying left-right fibered bimodule $\Omega$. We make source and target functions $s'$ and $t'$ on $\Omega'$ to be equal to target and source functions $s$ and $t$ on $\Omega$. Finally, we make left and right actions $\lambda'$ and $\rho'$ on $\Omega'$ to be equal to right and left actions $\rho$ and $\lambda$ on $\Omega$. With this structure it is immediate that $\Omega'$ is a left-right rigid smooth fibered bimodule over $E$. Consider now the topological horizontal composition $\Omega\circledast_{E}\Omega'$ of $\Omega$ and $\Omega'$ relative to the fibered topological semi-group underlying $E$. The space underlying $\Omega\circledast_E\Omega'$ is equal to the union of the diagonal $\Delta_\mathbb{R}$ on $\mathbb{R}^2$ and the line $x=-y$ equipped with the topology inhereted from $\mathbb{R}^2$. We conclude from this that the topological horizontal composition $\Omega\circledast_E\Omega'$ of $\Omega$ and $\Omega'$ relative to the fibered topological semi-group underlying $E$ does not admit an extension to $\tilde{\mbox{\textbf{fsGrp}}}^\ell_1$ and thus a horizontal composition operation on $\tilde{\mbox{\textbf{fsGrp}}}^\ell_0,\tilde{\mbox{\textbf{fsGrp}}}^\ell_1$ solving the compatibility diagram for forgetful functors $U_0$ and $U_1$ does not exist.

\

\noindent The next theorem provides, for each equivaraint local field theory $L$ a topological quantum field theory extending the cylinder topological quantum field theory associated to $L$ only with a codomain symmetric monoidal double category depending on field theory $L$ itself.

\

\begin{thm}
Let $n$ be a positive integer. Let $L$ be an equivariant local field theory of dimension $n$. There exists a symmetric monoidal double category $C^L$ concrete, as a symmetric monoidal double category, over $\tilde{\mbox{\textbf{fsGrp}}}$, satisfying the following two conditions:

\begin{enumerate}
\item Category of objects $C^L_0$ of double category $C^L$ is a symmetric monoidal sub-category of symmetric monoidal category $\tilde{\mbox{\textbf{fsGrp}}}^\ell_0$.
\item Category of morphisms $C^L_1$ of $C^L$ admits the structure of a symmetric monoidal category concrete over $\tilde{\mbox{\textbf{fsGrp}}}^\ell_1$.
\end{enumerate}

\noindent and a symmetric monoidal double functor $S^L$ from symmetric monoidal double category $\tilde{\mbox{\textbf{Cob}}}(n)$ of oriented cobordisms of dimension $n$ to $C^L$ such that there exists a choice of structure symmetric monoidal double functor $V$ of $C^L$ over $\tilde{\mbox{\textbf{fsGrp}}}$ and a choice of structure symmetric monoidal double functor $W$ of category of morphisms $C^L_1$ of $C^L$ over $\tilde{\mbox{\textbf{fsGrp}}}^\ell_1$ making the following two triangles commute

\begin{center}

\begin{tikzpicture}
  \matrix (m) [matrix of math nodes,row sep=3em,column sep=4em,minimum width=2em]
  {
     \tilde{\mbox{\textbf{Cob}}}(n)&C^L&C^L_1& \tilde{\mbox{\textbf{fsGrp}}}^\ell_1\\
     &\tilde{\mbox{\textbf{fsGrp}}}^\ell&&\tilde{\mbox{\textbf{fsGrp}}}_1 \\};
  \path[-stealth]
    (m-1-1) edge node [above]{$S^L$}(m-1-2)
		(m-1-2) edge node [right]{$V$}(m-2-2)
		(m-1-1) edge node [below]{$E^L$}(m-2-2)				
		
		(m-1-3) edge node [above] {$W$} (m-1-4)
    (m-1-4) edge node [right] {$U_1$} (m-2-4)
    (m-1-3) edge node [below] {$V_1$}(m-2-4);
\end{tikzpicture}
\end{center}

\end{thm}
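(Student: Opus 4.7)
The plan is to construct $C^L$ as the essential image of the cylinder construction at the smooth level, thereby sidestepping the pathology exhibited by the preceding example. First, take $C^L_0$ to be the symmetric monoidal subcategory of $\tilde{\mbox{\textbf{fsGrp}}}^\ell_0$ generated by the cylinder fibered Lie semi-groups $E^L_\Sigma$ and cylinder isomorphisms $E^L_\varphi$ already produced by $S^L_0$; by construction $S^L_0$ then factors through the inclusion $C^L_0 \hookrightarrow \tilde{\mbox{\textbf{fsGrp}}}^\ell_0$. For $C^L_1$, take objects to be the smooth cylinder fibered bimodules $\Omega^L_M$ associated to cornered oriented cobordisms $M$ of dimension $n$, and morphisms to be the smooth equivariant isomorphisms $\Omega^L_\Phi$; cartesian product makes $C^L_1$ symmetric monoidal, and the evident inclusion $W: C^L_1 \hookrightarrow \tilde{\mbox{\textbf{fsGrp}}}^\ell_1$ exhibits $C^L_1$ as concrete over $\tilde{\mbox{\textbf{fsGrp}}}^\ell_1$.

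To equip the pair $C^L_0, C^L_1$ with the structure of a symmetric monoidal double category, take source and target functors to be those inherited from $\tilde{\mbox{\textbf{fsGrp}}}^\ell$, and define the horizontal identity functor by $i(E^L_\Sigma) = \Omega^L_{\Sigma \times [0,1]}$ and the horizontal composition by $\Omega^L_M \circledast \Omega^L_N = \Omega^L_{M \cup_{\Sigma'} N}$ where $\Sigma'$ is the common boundary piece. The identifications $i_{E^L_\Sigma} = \Omega^L_{\Sigma \times [0,1]}$ established inside the proof of Theorem 4.7, together with the coherent natural isomorphism $A_{M,N}$ constructed there, ensure that these operations lift the horizontal identity and composition of $\tilde{\mbox{\textbf{fsGrp}}}$. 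The axioms of a symmetric monoidal double category for $C^L$ then follow from those already established for $\tilde{\mbox{\textbf{fsGrp}}}$ in Theorem 4.7 together with the associativity of the gluing transformation of $L$.

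Next, the pair $S^L_0, S^L_1$ lifts tautologically to a symmetric monoidal double functor $S^L: \tilde{\mbox{\textbf{Cob}}}(n) \to C^L$, since objects and morphisms of $C^L$ were defined precisely so that horizontal identities and compositions are matched on the nose. The structure double functor $V: C^L \to \tilde{\mbox{\textbf{fsGrp}}}$ is the evident forgetful assignment sending each smooth fibered Lie semi-group and each smooth fibered bimodule to its underlying topological datum; on objects it factors through $\tilde{\mbox{\textbf{fsGrp}}}^\ell_0$ by the inclusion $C^L_0 \subseteq \tilde{\mbox{\textbf{fsGrp}}}^\ell_0$, while on morphisms it factors as $U_1 W$. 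The triangle $V S^L = E^L$ then holds by definition of $V$ on objects and morphisms, and the triangle $U_1 W = V_1$ holds by construction of $W$ as an inclusion.

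The main obstacle is verifying that the prescribed horizontal composition $\Omega^L_M \circledast \Omega^L_N = \Omega^L_{M \cup_{\Sigma'} N}$ genuinely lifts the topological composition $\Omega^L_M \circledast_{E^L_{\Sigma'}} \Omega^L_N$ to a smooth fibered bimodule, so that $V$ is a double functor rather than merely a pair of compatible functors; the example immediately preceding the theorem shows that such a lift can fail for generic smooth bimodules. The resolution is precisely the gluing axiom in the definition of equivariant local field theory: the gluing transformation $\bullet_{\Sigma'}: L_{M \cup_{\Sigma'} N} \to L_M$ is a smooth embedding whose image is the equalizer of the two boundary projections, and hence $\bullet_{\Sigma'}$ induces a diffeomorphism onto the topological fiber product $L_M \times_{L_{\Sigma'}} L_N$. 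The smooth structure on $L_{M \cup_{\Sigma'} N}$ thus canonically smooths the topological fiber product, and it is automatically compatible with the left and right actions since those too are implemented by gluing. The pathology of the example is therefore avoided precisely because cylinder bimodules come equipped globally with smooth data satisfying the gluing axiom.
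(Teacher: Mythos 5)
Your core geometric idea is the same one the paper uses: only ever compose cylinder-type bimodules, and let the gluing axiom (equivalently the associator $A_{M,N}$, whose underlying map is $\bullet_{\Sigma'}$) carry the smooth structure of $\Omega^L_{M\cup_{\Sigma'}N}$ over to the topological fiber product, so the pathology of the example never arises. (Minor slip: $\bullet_{\Sigma'}$ maps $L_{M\cup_{\Sigma'}N}$ into $L_{M\sqcup N}=L_M\times L_N$, not into $L_M$.) But your construction of the category of morphisms has a genuine gap. You take $C^L_1$ to be the bare image of the smooth cylinder construction --- objects the bimodules $\Omega^L_M$, morphisms the isomorphisms $\Omega^L_\Phi$ --- and then define horizontal composition by $\Omega^L_M\circledast\Omega^L_N:=\Omega^L_{M\cup_{\Sigma'}N}$. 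This formula is a rule on cobordism representatives, not on objects: an object of your $C^L_1$ is just a smooth fibered bimodule, and nothing guarantees it determines $M$, since the assignment $M\mapsto\Omega^L_M$ need not be injective (nothing in the axioms of an equivariant local field theory prevents $L_M=L_N$ for distinct cobordisms $M$ and $N$). The same defect infects the morphism level --- the image of $S^L_1$ need not even be closed under composition, because equality of bimodules does not make the underlying cobordism diffeomorphisms composable --- and it infects the compositor of $V$, which you would also need to be well defined and natural. The paper's objects are triples $(M,\Omega,\Phi)$ precisely so that the cobordism and the identification with $\Omega^L_M$ are carried as data; with that, the horizontal composition and all structure maps are defined intrinsically on objects.

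There is a second, related divergence. With your composition, $V$ preserves horizontal composition only up to the isomorphism $A_{M,N}$, so your $V$ can at best be a pseudo double functor, and its compositors are exactly the ill-defined choices above. The paper instead defines the composition of $(M,\Omega,\Phi)$ and $(M',\Omega',\Phi')$ to be the topological composition $\Omega\circledast_{E^L_{\Sigma'}}\Omega'$ equipped with the smooth structure pulled back along $(\Phi\circledast_\varphi\Phi')A_{M,M'}^{-1}$ from $\Omega^L_{M\cup_{\Sigma'}M'}$; with that choice $V$ lifts the horizontal structure of $\tilde{\mbox{\textbf{fsGrp}}}$ on the nose, which is what the concreteness claim of the theorem is meant to deliver. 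Note that this strict choice is incompatible with your $C^L_1$: the fiber product with transported smooth structure is only isomorphic, not equal, to $\Omega^L_{M\cup_{\Sigma'}M'}$, so it does not lie in the image category --- this is exactly why the paper must enlarge the objects to triples. If you repair your construction by remembering $M$ and the identification as part of each object, you arrive at the paper's proof.
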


\begin{proof}

Let $n$ be a positive integer. Let $L$ be an equivariant local field theory of dimension $n$. We wish to prove the existence of symmetric monoidal double category $C^L$, symmetric monoidal functor $W$, and symmetric monoidal double functors $V$ and $S^L$ satisfying the conditions above and making the diagrams appearing in the statement of the theorem commute. We begin by defining symmetric monoidal double category $C^L$.

We make category of objects $C^L_0$ of $C^L$ to be the image category of functor of cylinder Lie semi-groups $S_0^L$. From symmetric monoidality of functor $S_0^L$ it follows that thus defined category $C^L_0$ is a symmetric monoidal sub-category of category $\tilde{\mbox{\textbf{fsGrp}}}^\ell_1$ of reduced fibered Lie semi-groups. We now define category of morphisms $C^L_1$ of double category $C^L$. We make the collection of objects of category $C^L_1$ to be the collection of all triples of the form $(M,\Omega,\Phi)$ where $M$ is an oriented cobordism of dimension $n$ from an oriented manifold $\Sigma$ to another oriented manifold $\Sigma'$, $\Omega$ is a left-right reduced smooth fibered bimodule over cylinder fibered Lie semi-groups $E^L_\Sigma$ and $E^L_{\Sigma'}$, and $\Phi$ is an equivariant smooth isomorphism from the cylinder smooth fibered bimodule $\Omega^L_M$ associated to $M$ to $\Omega$. Given objects $(M,\Omega,\Phi)$ and $(M',\Omega',\Phi')$ in $C^L_1$ where $M$ is an oriented cobordism from oriented manifold $\Sigma$ to oriented manifold $\Lambda$ and where $M'$ is an oriented cobordism from oriented manifold $\Sigma'$ to oriented manifold $\Lambda'$ we will make collection of morphisms from triple $(M,\Omega,\Phi)$ to triple $(M'.\Omega',\Phi')$ to be the collection of all pairs of the form $(f,\Psi)$ where $f$ is an equivariant diffeomorphism of cobordisms from $M$ to $M'$ and where $\Psi$ is an isomorphism of smooth fibered bimodules from $\Omega$ to $\Omega'$ such that the following square commutes

\begin{center}

\begin{tikzpicture}
  \matrix (m) [matrix of math nodes,row sep=3em,column sep=7em,minimum width=2em]
  {
     \Omega^L_M& \Omega^L_{M'} \\
     \Omega & \Omega' \\};
  \path[-stealth]
    (m-1-1) edge node [left] {$\Phi$} (m-2-1)
            edge node [above] {$\Omega^L_f$} (m-1-2)
    (m-1-2) edge node [right] {$\Phi'$} (m-2-2)
    (m-2-1) edge node [below] {$\Psi$}(m-2-2);
\end{tikzpicture}
\end{center}

\noindent Entry-wise cartesian product makes $C^L_1$ into a symmetric monoidal category. We now provide pair $C^L$ formed by symmetric monoidal categories $C^L_0$ and $C^L_1$ with the structure of a symmetric monoidal double category. We begin by defining source and target functors $s^L$ and $t^L$ for $C^L$. Let $(M,\Omega,\Phi)$ be an object in $C^L_1$ where $M$ is an oriented cobordism of dimension $n$ from oriented manifold $\Sigma$ to oriented manifold $\Lambda$. In that case we make source $s^L(M,\Omega,\Phi)$ of triple $(M,\Omega,\Phi)$ to be cylinder fibered Lie semi-group $E^L_\Sigma$ and we make target $t^L(M,\Omega,\Phi)$ to be cylinder fibered Lie semi-group $E^L_\Lambda$. Given a morphism $(f,\Psi)$ in $C^L_1$ from an object $(M,\Omega,\Phi)$ in $C^L_1$ to an object $(M',\Omega',\Phi')$ in $C^L_1$ where $M$ is an oriented cobordism from oriented manifold $\Sigma$ to oriented manifold $\Lambda$ and where $M'$ is an oriented cobordism from oriented manifold $\Sigma'$ to oriented manifold $\Lambda'$, if we denote $f_\Sigma$ and $f_\Lambda$ the restrictions of $f$ to boundary components $\Sigma$ and $\Lambda$ respectively then, in that case we make source $s^L(f,\Psi)$ of $(f,\Psi)$ to be cylinder ismorphism of fibered Lie semi-groups $E^L_{f_\Sigma}$ associated to oriented diffeomorphism $f_\Sigma$ and we make target $t^L(f,\Psi)$ to be cylinder isomorphism $E^L_{f_\Lambda}$ associated to diffeomorphism $f_\Lambda$. It is easily seen that thus defined $s^L$ and $t^L$ define symmetric monoidal functors from category of morphisms $C^L_1$ of $C^L$ to category of objects $C^L_0$ of $C^L$. We make $s^L$ and $t^L$ to be source and target symmetric monoidal functors of $C^L$. We now define horizontal identity symmetric monoidal functor $i^L$ on $C^L$.

Let $\Sigma$ be an oriented manifold of dimension $n-1$. We will make the image $i^L_{E^L_\Sigma}$ of cylinder fibered Lie semi-group $E^L_\Sigma$ associated to $\Sigma$ under horizontal identity functor $i^L$ on $C^L$ to be the triple $(i_{E^L_\Sigma},\Sigma\times [0,1],id_{i_{E^L_\Sigma}})$ where $i_{E^L_\Sigma}$ is the smooth horizontal identity fibered bimodule associated to cylinder fibered Lie semi-group $E^L_\Sigma$ associated to $\Sigma$. Let now $\varphi$ be an oriented diffeomorphism from oriented manifold $\Sigma$ to another oriented manifold $\Lambda$ of dimension $n-1$. We make in that case horizontal identity $i^L_{E^L_\varphi}$ of cylinder morphism $E^L_\varphi$ associated to $\varphi$ to be the pair $(\varphi\times id_{[0,1]},i_{E^L_\varphi})$. Thus defined $i^L_{E^L_\varphi}$ is easily seen to be an isomorphism in category $C^L_1$. It is easy to see that functor $i^L$ defined above is symmetric monoidal. We will make $i^L$ to be the horizontal identity functor of $C^L$. Finally we define horizontal composition bifunctor $\circledast^L$ on $C^L$.

Let $(M,\Omega,\Phi)$ and $(M',\Omega',\Phi')$ be horizontally compatible objects in category $C^L_1$. Suppose $M$ is an oriented cobordism of dimension $n$ from an oriented manifold $\Sigma$ to an oriented manifold $\Sigma'$ and suppose $M'$ is an oriented cobordism of dimension $n$ from $\Sigma'$ to an oriented manifold $\Sigma''$. Suppose further that target of equivariant smooth morphism $\Phi$ is equal to source of equivariant smooth morphism $\Phi'$. Write $\varphi$ for this common automorphism of cylinder fibered Lie semi-group $E^L_{\Sigma'}$. Horizontal composition $\Phi\circledast_\varphi\Phi'$ of $\Phi$ and $\Phi'$ relative to $\varphi$ is an equivariant isomorphism of topological fibered bimodules from the horizontal composition $\Omega^L_M\circledast_{E^L_{\Sigma'}}\Omega^L_{M'}$ of horizontal compatible cylinder fibered bimodules $\Omega^L_{M}$ and $\Omega^L_{M'}$ to horizontal composition of horizontally compatible fibered bimodules $\Omega\circledast_{E^L_{\Sigma'}}\Omega'$. The associator $A_{M,M'}$ in symmetric monoidal double category $\tilde{\mbox{\textbf{fsGrp}}}$ associated to $M$ and $M'$ is an equivariant isomorphism of topological fibered bimodules from horizontal composition $\Omega^L_M\circledast_{E^L_{\Sigma'}}\Omega^L_{M'}$ to cylinder fibered bimodule $\Omega^L_{M\cup_{\Sigma'}M'}$ associated to the glued manifold $M\cup_{\Sigma'}M'$. Composition $(\Phi\circledast_\varphi\Phi')A_{M,M'}^{-1}$ is thus an equivariant isomorphism of topological fibered bimodules from cylinder fibered bimodule $\Omega^L_{M\cup_{\Sigma'}M'}$ to horizontal composition $\Omega\circledast_{E^L_{\Sigma'}}\Omega'$. We consider, on the space underlying $\Omega\circledast_{E^L_{\Sigma'}}\Omega'$, the pullback smooth structure defined by that of $\Omega^L_{M\cup_{\Sigma'}M'}$ and homeomorphism underlying $(\Phi\circledast_\varphi\Phi')A_{M,M'}^{-1}$. From the fact that components of both $(\Phi\circledast_\varphi\Phi')$ and $A_{M,M'}^{-1}$ are smooth and from the fact that fibered bimodules involved in these equivariant morphisms are smooth it follows that with the smooth structure considered above horizontal composition $\Omega\circledast_{E^L_{\Sigma'}}\Omega'$ is a smooth fibered bimodule and that with this structure equivariant isomorphism $(\Phi\circledast_\varphi\Phi')A_{M,M'}^{-1}$ is smooth. Triple $(M\cup_{\Sigma'}M',\Omega\circledast_{E^L_{\Sigma'}}\Omega',(\Phi\circledast_\varphi\Phi')A_{M,M'}^{-1})$ is thus an object in category $C^L_1$. We make this object to be the horizontal composition $(M,\Omega,\Phi)\circledast^L_{E^L_{\Sigma'}}(M',\Omega',\Phi')$ of horizontally compatible triples $(M,\Omega,\Phi)$ and $(M',\Omega',\Phi')$.  We define horizontal composition $\circledast^L$ on horizontally compatible morphisms in $C^L_1$ by entry-wise horizontal compositions in $\tilde{\mbox{\textbf{Cob}}}(n)$ and $\tilde{\mbox{\textbf{fsGrp}}}$. Thus defined horizontal composition $\circledast^L$ defines an associative symmetric monoidal bifunctor from $C^L_1\times_{C^L_0}C^L_1$ to $C^L_1$. It is obvious that with the structure defined above pair $C^L$ formed by symmetric monoidal categories $C^L_0$ and $C^L_1$ is a symmetric monoidal double category. Moreover, we have constructed $C^L_1$ in such a way that the category of objects $C^L_0$ of $C^L$ is a symmetric monoidal sub-category of category $\tilde{\mbox{\textbf{fsGrp}}}^\ell_0$. We now prove that category of morphisms $C^L_1$ of $C^L$ as defined above is concrete, as a symmetric monoidal category, over symmetric monoidal category $\tilde{\mbox{\textbf{fsGrp}}}^\ell_1$. We define a faithful symmetric monoidal functor $W$ from $C^L_1$ to $\tilde{\mbox{\textbf{fsGrp}}}^\ell_1$.

Let $(M,\Omega,\Phi)$ be an object in category $C^L$ where $M$ is an oriented cobordism of dimension $n$ from an oriented manifold $\Sigma$ to another oriented manifold $\Lambda$. We make the image $W(M,\Omega,\Phi)$ of triple $(M,\Omega,\Phi)$ under functor $W$ to be the left-right smooth fibered bimodule $\Omega$ over $E^L_\Sigma$ and $E^L_\Lambda$. Now, let $(f,\Psi)$ be a morphism in $C^L$ from object $(M,\Omega,\Phi)$ to object $(M',\Omega',\Phi')$ where $M$ is an oriented cobordism from oriented manifold $\Sigma$ to oriented manifold $\Lambda$ and $M'$ is an oriented cobordism from oriented manifold $\Sigma'$ to oriented manifild $\Lambda'$. Let $f_\Sigma$ denote the restriction of diffeomorphism $f$ to $\Sigma$ and let $f_\Lambda$ denote the restriction to $\Lambda$ of $f$. In that case we make the image $W(f,\Psi)$ of morphism $(f,\Psi)$ under functor $W$ to be smooth equivariant morphism $(E^L_{f_\Sigma},\Psi,E^L_{f_\Lambda})$. Thus defined $W(f,\Psi)$ is an equivariant smooth ismorphism from smooth fibered bimodule $\Omega$ to smooth fibered bimodule $\Omega'$. Denote by $W$ the pair formed by function associating to every object $(M,\Omega,\Phi)$ in $C^L_1$ smooth fibered bimodule $W(M,\Omega,\Phi)$ and associating to every isomorphism $(f,\Psi)$ in $C^L_1$ smooth equivariant isomorphism $W(f,\Psi)$. Thus defined $W$ is a functor from category of morphisms $C^L_1$ to category $\tilde{\mbox{\textbf{fsGrp}}}^\ell_1$. From the fact that the cylinder smooth fibered bimodule functor is symmetric monoidal it follows that functor $W$ is symmetric monoidal. Functor $W$ is clearly faithful. We conclude that category of morphisms $C^L_1$ of $C^L$ concrete, as a symmetric monoidal category, over symmetric monoidal category $\tilde{\mbox{\textbf{fsGrp}}}^\ell_1$ with functor $W$ serving as structure functor. We now define on symmetric monoidal double category $C^L$ the structure of a concrete symmetric monoidal double category over $\tilde{\mbox{\textbf{fsGrp}}}$.

We define a symmetric monoidal forgetful double functor $V$ from double category $C^L$ to double category $\tilde{\mbox{\textbf{fsGrp}}}$. We make the functor of objects $V_0$ of $V$ to be the restriction, to category of objects $C^L_0$ of $C^L$, of forgetful functor $U_0$ of $\tilde{\mbox{\textbf{fsGrp}}}^\ell_0$ onto category of objects $\tilde{\mbox{\textbf{fsGrp}}}_0$ of $\tilde{\mbox{\textbf{fsGrp}}}$. Thus defined $V_0$ is symmetric monoidal and faithful. We now make functor on morphisms $V_1$ of $V$ to be composition $U_1W$ of functor $W$ defined in previous paragraph and forgetful functor $U_1$ of $\tilde{\mbox{\textbf{fsGrp}}}^\ell_1$ over $\tilde{\mbox{\textbf{fsGrp}}}_1$. Thus defined $V_1$ is again symmetric monoidal and faithful. Moreover, it is trivial to see that pair $V$ formed by functors $V_0$ and $V_1$ forms a double functor. We conclude that symmetric monoidal double category $C^L$ is concrete, as a symmetric monoidal double category, over symmetric monoidal double cateogry $\tilde{\mbox{\textbf{fsGrp}}}$ with symmetric monoidal functor $V$ as structure symmetric monoidal double functor. Moreover, from the way it was defined functor of morphisms $V_1$ of double functor $V$ and structure functor $W$ of category of morphisms $C^L_1$ of $C^L$ over $\tilde{\mbox{\textbf{fsGrp}}}^\ell_1$ make the following diagram commutative.

\begin{center}

\begin{tikzpicture}
  \matrix (m) [matrix of math nodes,row sep=3em,column sep=7em,minimum width=2em]
  {
     C^L_1&\tilde{\mbox{\textbf{fsGrp}}}^\ell_1\\
          &\tilde{\mbox{\textbf{fsGrp}}}_1 \\};
  \path[-stealth]
    (m-1-1) edge node [above] {$W$} (m-1-2)
    (m-1-2) edge node [right] {$U_1$} (m-2-2)
    (m-1-1) edge node [below] {$V_1$}(m-2-2);
\end{tikzpicture}
\end{center}

\noindent We now define symmetric monoidal double functor $S^L$ from double category $\tilde{\mbox{\textbf{Cob}}}(n)$ to double category $C^L$ satisfying the conditions prescribed in the statement of the theorem. We make functor on objects $S^L_0$ of $S^L$ to be the cylinder Lie semi-group symmetric monoidal functor associated to $L$. We now define functor on morphisms $S^L_1$ of $S^L$. Let $\Sigma$ and $\Lambda$ be oriented manifolds of dimension $n-1$. Let $M$ be an orented cobordism from $\Sigma$ to $\Lambda$. We make the image $S^L_1(M)$ of $M$ under functor $S^L_1$ to be the triple $(M,\Omega^L_M,id_{\Omega^L_M})$ formed by cobordism $M$, left-right cylinder smooth fibered bimodule $\Omega^L_M$, and the identity isomorphism $id_{\Omega^L_M}$. Thus defined $S^L_1(M)$ is an object in category of morphisms $C^L_1$ of $C^L$. Let now $\Sigma'$ and $\Lambda'$ be another pair of oriented manifolds of dimension $n-1$ and let $M'$ be an oriented cobordism from $\Sigma'$ to $\Lambda'$. Let $\varphi$ be an equivariant oriented diffeomorphism from $M$ to $M'$. In that case we make the image $S^L_1(\varphi)$ of $\varphi$ under functor $S^L_1$ to be the pair $(\varphi,\Omega^L_\varphi)$ formed by diffeomorphism $\varphi$ and cylinder smooth equivariant isomorphism $\Omega^L_\varphi$ associated to $\varphi$. Thus defined $S^L_1$ is a symmetric monoidal functor from category of morphisms $\tilde{\mbox{\textbf{Cob}}}_1$ of $\tilde{\mbox{\textbf{Cob}}}$ to cateogry of morphisms $C^L_1$ of $C^L$. We now prove that the pair $C^L$ formed by symmetric monoidal functors is a double functor.

\end{proof}

\section{Recovering symplectic structure}

\noindent In this section we extend the smooth cylinder topological quantum field theory construction to the construction of topological quantum field theories associated to equivariant local field theories implementing the symplectic structure of spaces of germs of local solutions. The presentation fo this extension procedure will be analogous to that of the smoothing procedure for topological cylinder quantum field theories presented in the previous section. We begin with the definition of fibered symplectic semi-group and Lagrangian fibered bimodules.

\

\noindent\textit{Fibered symplectic semi-groups and their bimodules}

\

\noindent We will say that a fibered Lie semi-group $E$ is symplectic whenever the base manifold of $E$ is equipped with the structure of a symplectic manifold. We will say that a fibered symplectic semi-group $E$ is rigid when the fibered Lie semi-group underlying $E$ is rigid. Given a morphism of fibered Lie semi-groups $\Phi$ from the underlying fibered Lie semi-group of a fibered symplectic semi-group $E$ to the underlying fibered Lie semi-group of another fibered symplectic semi-group $E'$ is a morphism of fibered symplectic semi-groups if base space function of $\Phi$ is a symplectomorphism. We write \textbf{fsGrp}$^s_0$ for the category of fibered symplectic semi-groups and their morphisms. Cartesian product provides \textbf{fsGrp}$^s_0$ with the structure of a symmetric monoidal category concrete over \textbf{fsGrp}$^\ell_0$. We write $\tilde{\mbox{\textbf{fsGrp}}}^s_0$ for the full symmetric monoidal sub-category of \textbf{fsGrp}$^\ell_0$ generated by rigid fibered symplectic semi-groups. Thus defined $\tilde{\mbox{\textbf{fsGrp}}}^s_0$ is concrete, as a symmetric monoidal category, over $\tilde{\mbox{\textbf{fsGrp}}}^\ell_0$. We write $\tilde{U}_0$ for the obvious forgetful functor of $\tilde{\mbox{\textbf{fsGrp}}}^s_0$ over $\tilde{\mbox{\textbf{fsGrp}}}^\ell_0$.

Let $E$ and $E'$ be fibered symplectic semi-groups with base symplectic manifolds $X$ and $Y$ respectively. Let $\Omega$ be a left-right smooth fibered bimodule over $E$ and $E'$ with source and target functions $s$ and $t$. We will understand for a structure of Lagrangian fibered bimodule on $\Omega$ a smooth function $\xi$ from $\Omega$ to the product $X\times Y$ such that source and target functions $s$ and $t$ on $\Omega$ are implemented by $\xi$, i.e. such that function $s$ equals composition $p_X\xi$ and target function $t$ equals composition $p_Y\xi$ where $p_X$ and $p_Y$ denote the projections on $X$ and $Y$ respectively of product $X\times Y$ and such that the image $im\xi$ of $\xi$ is a Lagrangian sub-manifold of $X\times -Y$. We will say that a smooth fibered bimodule together with a structure of Lagrangian fibered bimodule is a Lagrangian fibered bimodule. We will say that a Lagrangian fibered bimodule $\Omega$ is reduced whenever the smooth fibered bimodule underlying $\Omega$ is reduced.

Given an equivariant smooth morphism $\Phi$ from the smooth fibered bimodule underlying a Lagrangian fibered bimodule $\Omega$ to the smooth fibered bimodule underlying another Lagrangian fibered bimodule $\Omega'$ we say that $\Phi$ is an equivariant morphism of Lagrangian fibered bimodules from $\Omega$ to $\Omega'$ if $\Phi$ intertwines Lagrangian structures in $\Omega$ and $\Omega'$. Precisely, if $\Omega$ is a Lagrangian fibered bimodule over fibered symplectic semi-groups $E$ and $F$ with base symplectic manifolds $X$ and $Y$ respectively and Lagrangian structure $\xi$, $\Omega'$ is a fibered bimodule over fibered symplectic semi-groups $E'$ and $F'$ with base symplectic manifolds $X'$ and $Y'$ and Lagrangian structure $\xi'$, and if functions on base spaces of source and target fibered symplectic semi-group morphisms of $\Phi$ are $\varphi$ and $\psi$ respectively then $\Phi$ is Lagrangian if the following diagram commutes

\begin{center}

\begin{tikzpicture}
  \matrix (m) [matrix of math nodes,row sep=3em,column sep=7em,minimum width=2em]
  {
     \Omega&\Omega' \\
     E\times F&E'\times F'\\};
  \path[-stealth]
    (m-1-1) edge node [left] {$\xi$} (m-2-1)
            edge node [above] {$\Phi$} (m-1-2)
    (m-1-2) edge node [right] {$\xi'$} (m-2-2)
    (m-2-1) edge node [below] {$\varphi\times\psi$}(m-2-2);
\end{tikzpicture}
\end{center}

\noindent We will write \textbf{fsGrp}$^s_1$ for the symmetric monoidal category of Lagrangian fibered bimodules over fibered symplectic semi-groups and we will write $\tilde{\mbox{\textbf{fsGrp}}}^s_1$ for the full symmetric monoidal sub-category of \textbf{fsGrp}$^s_1$ generated by rigid Lagrangian fibered bimodules over rigid fibered symplectic semi-groups. Thus defined \textbf{fsGrp}$^s_1$ and $\tilde{\mbox{\textbf{fsGrp}}}^s_1$ are concrete, as symmetric monoidal categories over \textbf{fsGrp}$^\ell_1$ and $\tilde{\mbox{\textbf{fsGrp}}}^\ell_1$ respectively. We will write $\tilde{U}_1$ for the obvious forgetful functor in both cases.

\

\noindent Let $n$ be a positive integer. Let $L$ be an equivariant local field theory of dimension $n$. Given an oriented manifold of dimension $n-1$ field theory $L$ associates to the base manifold $L_\Sigma$ of cylinder fibered Lie semi-group $E^L_\Sigma$ associated to $\Sigma$ the structure of a symplectic manifold and thus associates to $E^L_\Sigma$ the structure of a rigid fibered symplectic semi-group. We call $E^L_\Sigma$ with this structure the cylinder fibered symplectic semi-group associated to $\Sigma$. The cylinder fibered symplectic semi-group construction extends to a symmetric monoidal functor from category of objects $\tilde{\mbox{\textbf{Cob}}}(n)_0$ of $\tilde{\mbox{\textbf{Cob}}}(n)$ to category $\tilde{\mbox{\textbf{fsGrp}}}^s_0$ of rigid fibered symplectic semi-groups. We write $\tilde{S}^L_0$ for this functor and we call $\tilde{S}^L_0$ the functor of cylinder fibered symplectic semi-groups associated to $L$. Thus defined $\tilde{S}^L_0$ makes the following diagram commute

\begin{center}

\begin{tikzpicture}
  \matrix (m) [matrix of math nodes,row sep=3em,column sep=7em,minimum width=2em]
  {
     \tilde{\mbox{\textbf{Cob}}}(n)_0& \tilde{\mbox{\textbf{fsGrp}}}^s_0 \\
     \tilde{\mbox{\textbf{fsGrp}}}_0 & \tilde{\mbox{\textbf{fsGrp}}}^\ell_0 \\};
  \path[-stealth]
    (m-1-1) edge node [left] {$E^L_0$} (m-2-1)
            edge node [above] {$\tilde{S}^L_0$} (m-1-2)
    (m-1-2) edge node [right] {$\tilde{U}_0$} (m-2-2)
    (m-2-2) edge node [below] {$V_0$}(m-2-1);
\end{tikzpicture}
\end{center}

\noindent and thus makes the following diagram commute

\begin{center}

\begin{tikzpicture}
  \matrix (m) [matrix of math nodes,row sep=3em,column sep=7em,minimum width=2em]
  {
     \tilde{\mbox{\textbf{Cob}}}(n)_0& \tilde{\mbox{\textbf{fsGrp}}}^s_0 \\
     & \tilde{\mbox{\textbf{fsGrp}}}_1 \\};
  \path[-stealth]
    (m-1-1) edge node [above] {$\tilde{S}^L_0$} (m-1-2)
    (m-1-2) edge node [right] {$\tilde{U}_0$} (m-2-2)
    (m-1-1) edge node [below] {$S^L_0$}(m-2-2);
\end{tikzpicture}
\end{center}

\noindent Given an oriented cobordism $M$ of dimension $n$ from an oriented manifold $\Sigma$ to another oriented manifold $\Lambda$ field theory $L$ associates to the manifold $L_M$ underlying cylinder smooth fibered bimodule $\Omega^L_M$ a smooth function, the restriction transformation associated to $M$, from $L_M$ to the product $L_\Sigma\times L_\Lambda$ of base symplectic manifolds $L_\Sigma$ and $L_\Lambda$ of cylinder fibered symplectic semi-groups $E^L_\Sigma$ and $E^L_\Lambda$ associated to $\Sigma$ and $\Lambda$. Function $r_M$ implements source and target functions on $\Omega^L_M$ and the image $im r_M$ is a symplectic sub-manifold of $L_\Sigma\times -L_\Lambda$. Thus defined $r_M$ thus provides $\Omega^L_M$ with the structure of a Lagrangian fibered bimodule over cylinder fibered symplectic semi-groups $E^L_\Sigma$ and $E^L_\Lambda$. We call $\Omega^L_M$ with this structure the cylinder Lagrangian fibered bimodule associated to $M$. As in the case of the cylinder smooth fibered bimodule construction the cylinder Lagrangian fibered bimodule construction extends to a symmetric monoidal functor from category of morphisms $\tilde{\mbox{\textbf{Cob}}}(n)_0$ of $\tilde{\mbox{\textbf{Cob}}}(n)$ to cateogory  $\tilde{\mbox{\textbf{fsGrp}}}^s_1$ of rigid Lagrangian fibered bimodules. We write $\tilde{S}^L_1$ for this functor. We call $\tilde{S}^L_1$ the functor of cylinder Lagrangian fibered bimodules associated to $L$. Functor $\tilde{S}^L_1$ makes the following diagram commute.

\begin{center}

\begin{tikzpicture}
  \matrix (m) [matrix of math nodes,row sep=3em,column sep=7em,minimum width=2em]
  {
     \tilde{\mbox{\textbf{Cob}}}(n)_1& \tilde{\mbox{\textbf{fsGrp}}}^s_1 \\
     \tilde{\mbox{\textbf{fsGrp}}}_1 & \tilde{\mbox{\textbf{fsGrp}}}^\ell_1 \\};
  \path[-stealth]
    (m-1-1) edge node [left] {$E^L_1$} (m-2-1)
            edge node [above] {$\tilde{S}^L_1$} (m-1-2)
    (m-1-2) edge node [right] {$\tilde{U}_1$} (m-2-2)
    (m-2-2) edge node [below] {$V_1$}(m-2-1);
\end{tikzpicture}
\end{center}

\noindent and thus makes the following triangle commute

\begin{center}

\begin{tikzpicture}
  \matrix (m) [matrix of math nodes,row sep=3em,column sep=7em,minimum width=2em]
  {
     \tilde{\mbox{\textbf{Cob}}}(n)_0& \tilde{\mbox{\textbf{fsGrp}}}^s_0 \\
     & \tilde{\mbox{\textbf{fsGrp}}}_1 \\};
  \path[-stealth]
    (m-1-1) edge node [above] {$\tilde{S}^L_1$} (m-1-2)
    (m-1-2) edge node [right] {$\tilde{U}_1$} (m-2-2)
    (m-1-1) edge node [below] {$S^L_1$}(m-2-2);
\end{tikzpicture}
\end{center}

\

\noindent \textit{Symplectic cylinder topological quantum field theories}

\

\noindent We now present a version of theorem 5.1 now providing an extension of the smooth cylinder topological quantum field theory construction taking symplectic structures into consideration. Its proof is analogous, up to certain details, to the proof of theorem 5.1.

\begin{thm}
Let $n$ be a positive integer. Let $L$ be an equivaraint field theory of dimension $n$. There exists a symmetric monoidal double category $\tilde{C}^L$, concrete as a symmetric monoidal double category, over $\tilde{\mbox{\textbf{fsGrp}}}$ such that

\begin{enumerate}
\item Category of objects $\tilde{C}^L_0$ of double category $\tilde{C}^L$ is a symmetric monoidal sub-category of symmetric monoidal category $\tilde{\mbox{\textbf{fsGrp}}}^s_0$.
\item Category of morphisms $\tilde{C}^L_1$ admits the structure of a concrete symmetric monoidal category over $\tilde{\mbox{\textbf{fsGrp}}}^s_1$
\end{enumerate}

\noindent and a symmetric monoidal double functor $\tilde{S}^L$ from symmetric monoidal double category $\tilde{\mbox{\textbf{Cob}}}(n)$ of cobordisms of dimension $n$ to $\tilde{C}^L$ such that there exist choices of structure symmetric monoidal functor $\tilde{W}$ of $\tilde{C}^L_1$ over $\tilde{\mbox{\textbf{fsGrp}}}^s_1$ and of structure symmetric monoidal double functor $\tilde{U}$ of $\tilde{C}^L$ over $\tilde{\mbox{\textbf{fsGrp}}}^s$ such that the following two diagrams commute

\begin{center}

\begin{tikzpicture}
  \matrix (m) [matrix of math nodes,row sep=3em,column sep=4em,minimum width=2em]
  {
     \tilde{\mbox{\textbf{Cob}}}(n)&\tilde{C}^L&\tilde{C}^L_1& \tilde{\mbox{\textbf{fsGrp}}}^s_1\\
     &\tilde{\mbox{\textbf{fsGrp}}}&&\tilde{\mbox{\textbf{fsGrp}}}_1 \\};
  \path[-stealth]
    (m-1-1) edge node [above]{$\tilde{S}^L$}(m-1-2)
		(m-1-2) edge node [right]{$\tilde{V}$}(m-2-2)
		(m-1-1) edge node [below]{$E^L$}(m-2-2)				
		
		(m-1-3) edge node [above] {$\tilde{W}$} (m-1-4)
    (m-1-4) edge node [right] {$\tilde{U}_1$} (m-2-4)
    (m-1-3) edge node [below] {$\tilde{V}_1$}(m-2-4);
\end{tikzpicture}
\end{center}

\end{thm}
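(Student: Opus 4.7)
The plan is to mimic the proof of Theorem~5.1 with Lagrangian structures replacing smooth ones throughout. First I set $\tilde{C}^L_0$ to be the image of the functor of cylinder fibered symplectic semi-groups $\tilde{S}^L_0$; symmetric monoidality of $\tilde{S}^L_0$ guarantees that this is a symmetric monoidal sub-category of $\tilde{\mbox{\textbf{fsGrp}}}^s_0$. For the category of morphisms $\tilde{C}^L_1$ I take objects to be triples $(M,\Omega,\Phi)$ where $M$ is an oriented cobordism of dimension $n$ from $\Sigma$ to $\Lambda$, $\Omega$ is a rigid Lagrangian fibered bimodule over cylinder fibered symplectic semi-groups $E^L_\Sigma$ and $E^L_\Lambda$, and $\Phi$ is an equivariant isomorphism of Lagrangian fibered bimodules from $\Omega^L_M$ to $\Omega$. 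Morphisms from $(M,\Omega,\Phi)$ to $(M',\Omega',\Phi')$ are pairs $(f,\Psi)$ where $f$ is an equivariant diffeomorphism of cobordisms and $\Psi$ is an equivariant Lagrangian isomorphism from $\Omega$ to $\Omega'$ making the evident square involving $\Phi$, $\Phi'$, $\Omega^L_f$ and $\Psi$ commute.

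Next I define source, target, horizontal identity, and horizontal composition on $\tilde{C}^L$ by the same prescriptions as in the smooth case: source and target are read off the boundary of $M$; the horizontal identity of $E^L_\Sigma$ is $(\Sigma\times [0,1],i_{E^L_\Sigma},id)$ with $i_{E^L_\Sigma}$ endowed with the diagonal Lagrangian structure, which the diagonal axiom for $L$ matches with the Lagrangian structure on $\Omega^L_{\Sigma\times [0,1]}$ inherited from $r_{\Sigma\times [0,1]}$; and horizontal composition of $(M,\Omega,\Phi)$ and $(M',\Omega',\Phi')$ is the triple $(M\cup_{\Sigma'}M',\Omega\circledast_{E^L_{\Sigma'}}\Omega',(\Phi\circledast_\varphi\Phi')A_{M,M'}^{-1})$, with the Lagrangian structure on $\Omega\circledast_{E^L_{\Sigma'}}\Omega'$ transported from that of $\Omega^L_{M\cup_{\Sigma'}M'}$ along the equivariant isomorphism $(\Phi\circledast_\varphi\Phi')A_{M,M'}^{-1}$.

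The main technical obstacle I anticipate is showing coherence of this transported Lagrangian structure. The essential inputs are the gluing axiom together with naturality and monoidality of the restriction transformation $r$ of $L$: these ensure that $r_{M\cup_{\Sigma'}M'}$ factors, modulo the associator $A_{M,M'}$, through the natural map into the fibered product over $L_{\Sigma'}$ of the images of $r_M$ and $r_{M'}$. A symplectic reduction argument then identifies this fibered product with the Lagrangian composition of the images of $r_M$ and $r_{M'}$, which is Lagrangian in $L_\Sigma\times -L_\Lambda$; this is the non-trivial step, and it is where the symplectic hypotheses genuinely enter. Functoriality, associativity, and coherence of the resulting horizontal composition with respect to Lagrangian structures then reduce diagrammatically to the corresponding facts already established in Theorem~5.1 together with naturality of the gluing transformation $\bullet$.

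The remaining steps are formal. I define $\tilde{W}: \tilde{C}^L_1\to\tilde{\mbox{\textbf{fsGrp}}}^s_1$ by $(M,\Omega,\Phi)\mapsto\Omega$ on objects and $(f,\Psi)\mapsto(E^L_{f_\Sigma},\Psi,E^L_{f_\Lambda})$ on morphisms; it is clearly faithful and symmetric monoidal. The concrete structure $\tilde{V}=(\tilde{V}_0,\tilde{V}_1)$ of $\tilde{C}^L$ over $\tilde{\mbox{\textbf{fsGrp}}}$ is given by $\tilde{V}_0$ the restriction of the composite forgetful $V_0\tilde{U}_0$ to $\tilde{C}^L_0$ and $\tilde{V}_1$ the composite $U_1\tilde{U}_1\tilde{W}$, both of which are faithful symmetric monoidal functors and which together render the required triangle relating $\tilde{W}$ to $\tilde{V}_1$ commutative. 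Finally I define $\tilde{S}^L$ by taking its object functor to be $\tilde{S}^L_0$ itself and its morphism functor to send $M$ to $(M,\Omega^L_M,id)$ and an equivariant diffeomorphism $\varphi$ to $(\varphi,\Omega^L_\varphi)$; the verification that $\tilde{S}^L$ is a symmetric monoidal double functor making the two triangles of the statement commute then follows from the analogous verifications in the proof of Theorem~5.1 together with the compatibility diagrams for $\tilde{S}^L_0$ and $\tilde{S}^L_1$ already established earlier in this section.
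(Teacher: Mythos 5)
Your construction tracks the paper's proof almost verbatim: $\tilde{C}^L_0$ as the image of $\tilde{S}^L_0$, objects of $\tilde{C}^L_1$ as triples $(M,\Omega,\Phi)$ with $\Omega$ Lagrangian, horizontal identities given by $(\Sigma\times[0,1],i_{E^L_\Sigma},id)$ with the diagonal Lagrangian structure, horizontal composition given by transporting structure along $(\Phi\circledast_\varphi\Phi')A_{M,M'}^{-1}$, and the functors $\tilde{W}$, $\tilde{V}$, $\tilde{S}^L$ defined exactly as in the paper. The one place you diverge is precisely the step you flag as ``non-trivial,'' and there your argument has a genuine gap. You propose to prove that the transported structure on $\Omega\circledast_{E^L_{\Sigma'}}\Omega'$ is Lagrangian by identifying its image with the geometric composition of the correspondences $im\, r_M\subseteq L_\Sigma\times -L_{\Sigma'}$ and $im\, r_{M'}\subseteq L_{\Sigma'}\times -L_{\Sigma''}$, i.e.\ with their fibered product over $L_{\Sigma'}$, and then invoking ``a symplectic reduction argument.'' Geometric composition of Lagrangian correspondences is not, in general, an embedded smooth submanifold, let alone a Lagrangian one: one needs transversality or clean-intersection hypotheses for the fibered product to be a manifold and for reduction to apply, and nothing in the axioms of equivariant local field theory supplies such hypotheses. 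As stated, that step cannot be carried out.

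Fortunately the step is also unnecessary, and this is exactly where the paper's proof is simpler. The Lagrangian structure $\xi_{M,M'}$ on $\Omega^L_M\circledast_{E^L_{\Sigma'}}\Omega^L_{M'}$ is by definition the transport of the restriction transformation $r_{M\cup_{\Sigma'}M'}$ along the associator isomorphism $A_{M,M'}$, and transporting along a bijection does not change images; hence $im\,\xi_{M,M'}=im\, r_{M\cup_{\Sigma'}M'}$, and the latter is a Lagrangian submanifold of $L_\Sigma\times -L_{\Sigma''}$ by the restriction-transformation axiom of $L$ applied to the glued cobordism $M\cup_{\Sigma'}M'$. In other words, the Lagrangian property of the composed correspondence is part of the \emph{data} of a local field theory --- it is what the restriction axiom asserts for glued regions, in combination with the gluing axiom identifying $L_{M\cup_{\Sigma'}M'}$ with the equalizer inside $L_M\times L_{M'}$ --- and is not something to be re-derived by symplectic reduction. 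Replace your reduction argument by this direct appeal to the axioms (and likewise for the structures transported onto $\Omega\circledast_{E^L_{\Sigma'}}\Omega'$ via $\Phi\circledast\Phi'$), and the remainder of your proposal coincides with the paper's proof.
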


\begin{proof}
Let $n$ be a positive integer. Let $L$ be an equivariant local field theory of dimension $n$. We wish to prove the existence of a symmetric monoidal double category $\tilde{C}^L$ and a symmetric monoidal double functor $\tilde{S}^L$ from double category $\tilde{\mbox{\textbf{Cob}}}(n)$ to $\tilde{C}^L$ satisfying the conditions of the theorem.

We begin by constructing symmetric monoidal double category $\tilde{C}^L$. The construction of double cateogry $\tilde{C}^L$ will be analogous to the construction of double category $C^L$ in theorem 5.1. We make category of objects $\tilde{C}^L_0$ of $\tilde{C}^L$ to be the image symmetric monoidal category of cylinder fibered symplectic semi-group functor $\tilde{S}^L_0$. Thus defined $\tilde{C}^L_0$ is a symmetric monoidal sub-category of $\tilde{\mbox{\textbf{fsGrp}}}^s_0$. We now define category of morphisms $\tilde{C}^L_1$ of $\tilde{C}^L$. We make collection of objects of of $\tilde{C}^L_1$ to be the collection of all triples of the form $(M,\Omega,\Phi)$ where $M$ is again an oriented cobordism of dimension $n$ from an oriented manifold $\Sigma$ to another oriented manifold $\Lambda$, $\Omega$ is now a Lagrangian fibered bimodule over $E^L_\Sigma$ and $E^L_\Lambda$ and $\Phi$ is an equivariant ismorphism of Lagrangian fibered bimodules from cylinder Lagrangian fibered bimodule $\Omega^L_M$ associated to $M$ to $\Omega$. The definition of colection of morphisms of $\tilde{C}^L_1$ is analogous to the definition of collection of morphisms of category $C^L_1$ associated to $L$. Thus defined $\tilde{C}^L_1$ is symmetric monoidal and clearly concrete over $\tilde{\mbox{\textbf{fsGrp}}}^s_1$. We now define Lagrangian horizontal identity functor $\tilde{i}^L$ for $\tilde{C}^L$.

Let $\Sigma$ be an oriented manifold of dimension $n-1$. In that case restriction transformation $r_{\Sigma\times [0,1]}$ associated to the cylinder $\Sigma\times [0,1]$ on $\Sigma$ is a smooth function from underlying manifold $L_{\Sigma\times [0,1]}$ of smooth horizontal identity $i_{\Sigma\times [0,1]}$ of cylinder fibered Lie semi-group $E^L_\Sigma$ associated to $\Sigma$ to the product $L_\Sigma^2$ of the base symplectic manifold $L_\Sigma$ of $E^L_\Sigma$ with itself. Function $r_{\Sigma\times [0,1]}$ implements source and target functions on $i_{E^L_\Sigma}$ and the image $imr_{\Sigma\times [0,1]}$ is equal to the diagonal sub-manifold $\Delta_{L_\Sigma}$ of $L_{\Sigma}^2$ and thus is a Lagrangian submanifold of $L_\Sigma\times -L_\Sigma$. Restriction transformation $r_{\Sigma\times [0,1]}$ thus provides smooth horizontal identity fibered bimodule $i_{E^L_\Sigma}$ associated to $E^L_\Sigma$ with the structure of a Lagrangian fibered bimodule over cylinder fibered symplectic semi-group $E^L_\Sigma$. We make the image $\tilde{i}^L_{E^L_\Sigma}$ of $E^L_\Sigma$ under horizontal identity functor $\tilde{i}^L$ associated to $L$ to be the triple $(\Sigma\times [0,1],i_{E^L_\Sigma},id_{i_{E^L_\Sigma}})$ where we equip smooth horizontal identity $i_{E^L_\Sigma}$ with the Lagrangian structure described above. We make horizontal identity functor $\tilde{i}^L$ on $\tilde{C}^L$ act in the obvious manner on morphisms of $\tilde{C}^L_0$. Thus defined $\tilde{i}^L$ is a symmetric monoidal functor from category of objects $\tilde{C}^L_0$ of $\tilde{C}^L$ to category of morphisms $\tilde{C}^L_1$ of $\tilde{C}^L$. We now define horizontal composition bifunctor $\tilde{\circledast}^L$ on $\tilde{C}^L$.

Let $\Sigma,\Sigma'$ and $\Sigma''$ be oriented manifolds of dimension $n-1$. Let $M$ be an oriented cobordism from $\Sigma$ to $\Sigma'$ and let $M'$ be an oriented cobordism from $\Sigma'$ to $\Sigma''$. Associator $A_{M,M'}$ associated to $M$ and $M'$ by cylinder topological semi-group double functor $E^L$ defines, by the gluing transformation axiom in the definition of $L$, a smooth equivariant isomorphism of smooth fibered bimodules from cylinder smooth fibered bimodule $\Omega^L_{M\cup_{\Sigma'}M'}$ associated to the glued manifold $M\cup_{\Sigma'}M'$ to the horizontal composition $\Omega^L_M\circledast_{E^L_{\Sigma'}}\Omega^L_{M'}$ of cylinder smooth fibered bimodules $\Omega^L_M$ and $\Omega^L_{M'}$ equipped with the smooth structure described in the proof of theorem ?. Let $\xi_{M,M'}$ denote the pullback of Lagrangian structure $r_{M\cup_{\Sigma'}M'}$ on $\Omega^L_{M\cup_{\Sigma'}M'}$ relative to $A_{M,M'}$. Thus defined $\xi_{M,M'}$ is a smooth function from manifold $L_M\times_{L_{\Sigma'}}L_{M'}$ underlying $\Omega^L_M\circledast_{E^L_{\Sigma'}}\Omega^L_{M'}$ to the product $L_\Sigma\times L_{\Sigma''}$ of base manifolds of $E^L_\Sigma$ and $E^L_{\Sigma''}$ making the following diagram commute

\[r_{M\cup_{\Sigma'}M'}A_{M,M'}=\xi_{M,M'}\]

\noindent From the commutativity of the diagram above it follows that the image $im \xi_{M,M'}$ of $\xi_{M,M'}$ is a Lagrangian sub-manifold of $L_\Sigma\times -L_{\Sigma''}$. Moreover, from the fact that $A_{M,M'}$ is an equivariant isomorphism of fibered bimodules it follows that $\xi_{M,M'}$ implements source and target functions of $\Omega^L_M\circledast_{E^L_{\Sigma'}}\Omega^L_{M'}$. We conclude that thus defined $\xi_{M,M'}$ is a Lagrangian structure on $\Omega^L_M\circledast_{E^L_{\Sigma'}}\Omega^L_{M'}$. Let now $\Omega$ be a Lagrangian fibered bimodule over $E^L_\Sigma$ and $E^L_{\Sigma'}$ and let $\Omega'$ be a Lagrangian fibered bimodule over $E^L_{\Sigma'}$ and $E^L_{\Sigma''}$. Let $\Phi$ be an equivariant isomorphism of Lagrangian fibered bimodules from $\Omega^L_M$ to $\Omega$ and let $\Phi'$ be an equivariant isomorphism of Lagrangian fibered bimodules from $\Omega^L_{M'}$ to $\Omega'$. In that case we make the horizontal composition $(M,\Omega,\Phi)\tilde{\circledast}^L(M',\Omega',\Phi')$ of objects $(M,\Omega,\Phi)$ and $(M',\Omega',\Phi')$ in $\tilde{C}^L_1$ to be triple $(M\cup_{\Sigma'}M',\Omega\circledast_{E^L_{\Sigma'}} \Omega',\Phi\circledast\Psi)$ where $\Omega\circledast_{E^L_{\Sigma'}}\Omega'$ is equipped with the Lagrangian structure obtained by pulling back Lagrangian structure $\xi_{M,M'}$ on $\Omega^L_M\circledast_{E^L_{\Sigma'}}\Omega^L_{M'}$ by $\Phi\circledast\Phi'$. We define horizontal composition of equivariant morphisms of Lagrangian equivariant morphisms in the obvious manner thus defining a symmetric monoidal bifunctor $\tilde{\circledast}^L$ from category $\tilde{C}^L_1\times_{\tilde{C}^L_0}\tilde{C}^L_1$ to $\tilde{C}^L_1$. Arguments analogous as the ones employed in the proof of theorem 5.1 prove that with this structure $\tilde{C}^L$ is a symmetric monoidal double cateogry satisfying the conditions prescribed in the theorem. We now define symmetric monoidal double functor $\tilde{S}^L$ satisfying the conditions of the theorem.

We make functor on objects $\tilde{S}^L_0$ of $\tilde{S}^L$ to be the functor of cylinder fibered symplectic semi-groups. We now define functor of morphisms $\tilde{S}^L_1$ of $\tilde{S}^L$. Let $\Sigma$ and $\Sigma'$ be oriented manifolds of dimension $n-1$. Let $M$ be an oriented cobordism from $\Sigma$ to $\Sigma'$. In that case we make the image of $M$ under $\tilde{S}^L_1$ to be the triple $(M,\Omega^L_M,id_{\Omega^L_M})$ where $\Omega^L_M$ is the cylinder Lagrangian fibered bimodule associated to $M$. We define $\tilde{S}^L_1$ on morphisms in the obvious manner. Again arguments analogous as the ones presented in the proof of theorem ? prove that thus defined $\tilde{S}^L$ is a symmetric monoidal double functor from symmetric monoidal double category $\tilde{\mbox{\textbf{Cob}}}(n)$ to symmetric monodial double cateogry $\tilde{C}^L$ satisfying the conditions prescribed in the statement of the theorem.

\end{proof}

\noindent Given a local equivariant field theory $L$ we call topological quantum field theory $\tilde{S}^L$ defined in the theorem above the symplectic cylinder topological quantum field theory associated to $L$.

\section{Bibliography}

\noindent [1] Oeckl R., A local and operational framework for the foundations of physics. arXiv:1610.09052

\

\noindent [2] Oeckl E., A positive formalism for quantum theory in the general boundary formulation. Found. Phys. 43 (2013) 1206-1232. 

\

\noindent [3] Oeckl R., Observables in the General Boundary Formulation. Quantum Field Theory and Gravity (Regensburg, 2010), 2012, pp. 137-156.

\

\noindent [4] Grandis M, Paré R., Limits in double categories Cahiers de topologie et géométrie différentielle catégoriques, tome 40, no 3 (1999), p. 162-220

\

\noindent [5] Paré R., Yoneda Theory for Double Cateogries, Theory and Applications of Categories, Vol. 25, No. 17, 2011, pp. 436–489.

\

\noindent [6] Lurie J. On the Classification of Topological Quantum Field Theory.

\

\noindent [7] Carqueville N, Runkel I., Introductory Lectures on Topological Quantum Field Theory arxiv.org/pdf/1705.05734.pdf

\

\noindent [8] Barret J. W., B. W. Westbury, Invariants of Piecewise-Linear 3-Manifolds. Trans.Amer.Math.Soc. 348 (1996) 3997-4022.

\

\noindent [9] Reshetikhin N., Viro O. Ribbon graphs and their invariants derived from quantum groups,
Comm. Math. Phys. Volume 127, Number 1 (1990), 1-26.

\

\noindent [10] Kirillov Jr. A., Balsam B. Turaev-Viro invariants as an extended TQFT. arXiv:1004.1533.

\

\noindent [11] Tsumura Y. A 2-categorical extension of the Reshetikhin-Turaev theory. arXiv:1309.3630.

\

\noindent [12] Gelca R. Topological quantum field theory with corners based on the Kauffman bracket. Commentarii Mathematici Helvetici September 1997, Volume 72, Issue 2, pp 216–243

\

\noindent [13] Atiyah M. Topological Quantum Field Theory. Pubicationes MAthematiques de l'IHES 64 (1988) 175-186.

\

\noindent [14] Al Quasmi K., Stokman J.V. The skein category of the annulus. arXiv:1710.04058.

\

\noindent [15] Brundan J. Representations of the oriented skein category. arXiv:1712.08953.

\

\noindent [16] Joyal A., Street R. Braided Monoidal Categories. Macquarie Mathematics Reports. 

\

\noindent [19] Chen J. The Temperley-Lieb Categories and Skein Modules. arxiv.org/pdf/1502.06845.

\

\noindent [20] Abramsky S. Temperley-Lieb Algebra: From Knot Theory to Logic and Computation via Quantum Mechanics.

\

\noindent [21] Nesheyev S., Yamashita M. A Few Remarks on the Tube Algebra of a Monoidal Category. arxiv.org/pdf/1511.06332.pdf

\

\noindent [22] Evans D. E., Kawahigashi Y. On Ocneanu’s theory of asymptotic inclusions for subfactors, topological quantum field theories and quantum doubles, Internat. J. Math. 6 (1995), no. 2, 205–228.

\

\noindent [23] Walker K. Morrison S. The Blob Complex. Geom. Topol. 16 (2012) 1481-1607.

\

\noindent [24] Walker K. TQFT's. http://canyon23.net/math/tc.pdf.

\

\noindent [25] Abrams L. Two-dimensional Topological Quantum Field Theories and Frobenius Algebras. 	J. Knot Theory Ramifications.

\

\noindent [26] Gwilliam K., Costello O. Factorization Algebras in Quantum Field Theory

\

\noindent [27] Williams B. R. The Virasoro vertex algebra and factorization algebras on Riemann surfaces. 	arXiv:1603.02349.

\end{document}